\documentclass[11pt, reqno]{amsart}

\usepackage[utf8]{inputenc}
\usepackage[T2A]{fontenc}
\usepackage[ukrainian,english]{babel}
\usepackage{amsmath} 
\usepackage{chngcntr}
\usepackage{mathtools}
\usepackage{amsthm}

\usepackage{thmtools}




\usepackage[english]{d-omega3}

\usepackage{amssymb}
\usepackage{graphicx}
\usepackage[all]{xy}

\def\div{{\rm div\,}}
\def\vecp{{\vec p}}
\DeclareMathOperator*{\esssup}{ess\,sup}


\usepackage{xurl}

\usepackage{csquotes} 

\usepackage{bibentry}




\usepackage{listings}

\lstset{literate=
  {á}{{\'a}}1 {é}{{\'e}}1 {í}{{\'i}}1 {ó}{{\'o}}1 {ú}{{\'u}}1
  {Á}{{\'A}}1 {É}{{\'E}}1 {Í}{{\'I}}1 {Ó}{{\'O}}1 {Ú}{{\'U}}1
  {à}{{\`a}}1 {è}{{\`e}}1 {ì}{{\`i}}1 {ò}{{\`o}}1 {ù}{{\`u}}1
  {À}{{\`A}}1 {È}{{\`E}}1 {Ì}{{\`I}}1 {Ò}{{\`O}}1 {Ù}{{\`U}}1
  {ä}{{\"a}}1 {ë}{{\"e}}1 {ï}{{\"i}}1 {ö}{{\"o}}1 {ü}{{\"u}}1
  {Ä}{{\"A}}1 {Ë}{{\"E}}1 {Ï}{{\"I}}1 {Ö}{{\"O}}1 {Ü}{{\"U}}1
  {â}{{\^a}}1 {ê}{{\^e}}1 {î}{{\^i}}1 {ô}{{\^o}}1 {û}{{\^u}}1
  {Â}{{\^A}}1 {Ê}{{\^E}}1 {Î}{{\^I}}1 {Ô}{{\^O}}1 {Û}{{\^U}}1
  {ã}{{\~a}}1 {ẽ}{{\~e}}1 {ĩ}{{\~i}}1 {õ}{{\~o}}1 {ũ}{{\~u}}1
  {Ã}{{\~A}}1 {Ẽ}{{\~E}}1 {Ĩ}{{\~I}}1 {Õ}{{\~O}}1 {Ũ}{{\~U}}1
  {œ}{{\oe}}1 {Œ}{{\OE}}1 {æ}{{\ae}}1 {Æ}{{\AE}}1 {ß}{{\ss}}1
  {ű}{{\H{u}}}1 {Ű}{{\H{U}}}1 {ő}{{\H{o}}}1 {Ő}{{\H{O}}}1
  {ç}{{\c c}}1 {Ç}{{\c C}}1 {ø}{{\o}}1 {Ø}{{\O}}1 {å}{{\r a}}1 {Å}{{\r A}}1
  {€}{{\euro}}1 {£}{{\pounds}}1 {«}{{\guillemotleft}}1
  {»}{{\guillemotright}}1 {ñ}{{\~n}}1 {Ñ}{{\~N}}1 {¿}{{?`}}1 {¡}{{!`}}1
}



\usepackage{enumitem}

\makeatletter
\def\namedlabel#1#2{\begingroup
    #2%
    \def\@currentlabel{#2}%
    \phantomsection\label{#1}\endgroup
}

\makeatother

\begin{document}



\selectlanguage{english}



\title[\!Applications \!of \!Interpolation \!theory \!to \!the regularity of some
quasilinear PDEs]{Applications of Interpolation theory to the regularity of some
quasilinear PDEs}



\author[I. Ahmed]{Irshaad Ahmed}
\organization{Department of Mathematics, Sukkur IBA University}
\address{Sukkur, PAKISTAN}
\email{irshaad.ahmed@iba-suk.edu.pk}
\orcid{0000-0003-0630-7620}

\author[A. Fiorenza]{Alberto Fiorenza}
\organization{University of Napoli \lq\lq Federico II\rq\rq\\Via Monteoliveto 3, 80134 Napoli, ITALY\\Istituto per le Applicazioni del Calcolo “Mauro Picone”, Consiglio Nazionale delle Ricerche\\Via Pietro Castellino 111, 80131 Napoli, ITALY}
\email{fiorenza@unina.it}
\orcid{0000-0003-2240-5423}

\author[M. R. Formica]{Maria Rosaria Formica}
\organization{\lq\lq Parthenope\rq\rq University of Napoli}
\address{Via Generale Parisi 13, Napoli, 80132, ITALY}
\email{mara.formica@uniparthenope.it}
\orcid{0000-0003-3962-9554}

\author[A. Gogatishvili]{Amiran Gogatishvili}
\organization{Institute of Mathematics of the Czech Academy of Sciences, Zitná}
\address{115 67 Prague 1, CZECH REPUBLIC}
\email{gogatish@math.cas.cz}
\orcid{0000-0003-3459-0355}

\author[A. El Hamidi]{Abdallah El Hamidi}
\organization{Département de Mathématiques et Laboratoire LaSIE, Université de La Rochelle}
\address{Av. Michel Crépeau 17042, La Rochelle Cedex 1, FRANCE}
\email{aelhamid@univ-lr.fr}




\abstract{english}{We present some regularity results on the gradient of the weak or entropic-renormalized solution $u$ to the homogeneous Dirichlet problem for the quasilinear equations of the form
 \begin{equation*}\label{p-laplacian_eq}
 -\div(|\nabla u|^{p-2}\nabla u)+V(x;u)=f,
 \end{equation*}
where $\Omega$ is a bounded smooth domain of $\mathbb R^n$, $V$ is a nonlinear potential and  $f$ belongs to non-standard spaces like Lorentz-Zygmund spaces.


Moreover, we collect some well-known and new results of the identication of some interpolation spaces and we enrich some contents with details.}


\thanks{M. R. Formica is partially supported by the INdAM-GNAMPA project, {\itshape {Risultati di regolarit{à} per PDEs in spazi di funzione non-standard}}, CUP\_E53C22001930001 and partially supported by PRIN 2022 Project “Advanced theoretical aspects in PDEs and their applications”, code 2022BCFHN2.\\
\indent A. Gogatishvili is partially supported by the Czech Academy of Sciences (RVO 67985840), by the
Czech Science Foundation (GACR), grant no. 23-04720S, by the Shota Rustaveli
National Science Foundation (SRNSF), grant no. FR-21-12353, and by
the grant Ministry of Education and Science of the Republic of Kazakhstan
(project no. AP14869887).\\
\indent The authors wish to express their most sincere gratitude to Professor Jean Michel Rakotoson, who generously involved the whole team of authors in this field of research. He shared the brilliant ideas which have been the cornerstone of paper [2] and the major part of this project.} 

 
\keywords{\vbadness=1000{interpolation, H{ö}lderian operator, quasilinear PDE, regularity,
anisotropic}-variable exponent}
\msc{46M35, 35J62, 35B45, 35D30, 35J25, 46E30, 46B70}

%

\maketitle


\section{Introduction}

Let $\Omega$ be a bounded smooth domain of $\mathbb R^n$, $n\geq 2$. We study the regularity on the gradient of the solutions to the quasilinear Dirichlet problem
\begin{equation}\label{p-laplacian}
 -\Delta_p u+V(x;u)=f \ \ \ {\rm in} \ \Omega,  \ \ \ \ u=0 \ \ {\rm on} \ \ \partial \Omega,
 \end{equation}
where $\Delta_p u=\div(|\nabla u|^{p-2}\nabla u)$ is the $p$-Laplacian operator, $1<p<\infty$,
$V:\Omega\times \mathbb R \to\mathbb R$ is a Carath{é}odory function such that

\begin{enumerate}
\renewcommand{\labelenumi}{(\theenumi)}
\item[(H1)\label{H1}] the mapping $x\in\Omega\to V(x;\sigma)$ is in $L^\infty(\Omega)$  for every $\sigma\in\mathbb R$;
\item[(H2)\label{H2}] the mapping $\sigma\in\mathbb R\to V(x;\sigma)$ is continuous and
non decreasing for almost every $x\in \Omega$ and $V(x;0)=0$.
\end{enumerate}
The datum $f$ of the equation will be assumed in non-standard spaces, such as Lorentz-Zygmund spaces or $G\Gamma$ spaces.

Most of our regularity results are based on applications of results on nonlinear interpolation of $\alpha$-H\"olderian mappings between interpolation spaces with logarithm functors, which extend some results proved in \cite{Tartar1972BullFrance,Tartar1972} by Luc Tartar, who first gave interpolation results on nonlinear H\"olderian mappings (which include Lipschitz mappings) and applied them to PDEs. Other results concerning interpolation of Lipschitz operators and other applications of Interpolation theory, also in PDEs, were given in \cite{Cianchi-Mazja_European,Maligranda1984Studia,Maligranda1989,Maligranda_Persson_Wyller}.

 In Section \ref{Sec function spaces} we define the spaces involved and recall some properties. In Section \ref{Section_interpolation_spaces} we define the interpolation spaces with logarithm function and we identify some interpolation spaces between couples of Lebesgue or Lorentz spaces,
recovering spaces as Lorentz–Zygmund spaces or $G\Gamma$-spaces.
In Section \ref{Section_Holderian_mappings} we give results concerning interpolation of H\"olderian mappings to couple of spaces with a logarithm function and in Section \ref{SecKfunctional} we study the action of these mappings on those couples. Finally in Section \ref{SecPDEs}, \ref{SecApplHolderianpgreater2} and \ref{SecApplHolderianpsmaller2} we provide applications of these results to regularity on the gradient of the weak or entropic-renormalized solution $u$ to quasilinear Dirichlet problem \eqref{p-laplacian}.
We also show that the mapping ${\mathcal T}: \ {\mathcal T}f=\nabla u$ is locally or globally $\alpha$-H\"olderian under suitable values of $\alpha$ and appropriate assumptions on $V$. We will exibit only some proofs, to give an idea of the tools involved.
%

\section{Definitions of some functional spaces}\label{Sec function spaces}

Let $\Omega$ be a bounded open set of $\mathbb R^n$, with Lebesgue measure $|\Omega|$.

Here and in the sequel, if $A_1$ and $A_2$ are two quantities depending on some parameters, we
write $A_1\lesssim  A_2$ if there exists $c>0$ independent of the parameters such that
$A_1\leq cA_2$, and $A_1\simeq A_2$ if and only if $A_1\lesssim A_2$ and $A_2\lesssim A_1$.

\vspace{1mm}

Moreover sometimes, for simplicity of notations, for a function space $X$ on $\Omega$ we will only write $X$ instead of $X(\Omega)$.

\begin{definition} \textbf{Decreasing rearrangement.}\label{decreasing_rearrangement}\\
For a measurable function $f:\Omega\to \mathbb R$, for any $t\geq 0$, the distribution function of $f$ is
\begin{equation*}\label{distribuction_function}
D_f(t)=|\{x\in \Omega \, : \, |f(x)|\geq t\}|,
\end{equation*}
and the decreasing rearrangement of $f$ is defined by
\begin{equation*}\label{rearrangement}
f_*(s)=\inf\{t \, : \, D_f(t)\leq s\}, \ \ \forall\, s\in (0,|\Omega|),
\end{equation*}
(with the convention $\inf \emptyset=+\infty$).

\vspace{1mm}
The maximal function $f_{**}$ of $f$ is defined by
\begin{equation*}\label{maximal_function}
f_{**}(s)=\frac{1}{s}\int_0^s f_*(t)\,dt.
\end{equation*}

\end{definition}

\vspace{2mm}

\begin{definition} \textbf{Lorentz spaces} (see, e.g., \cite{Bennett_Sharpley}). \label{def_Lorentz_space}\\
Let $1\leq p<+\infty$ and $1\leq q\leq +\infty$. The Lorentz space $L^{p,q}(\Omega)$ is defined as the set
of all measurable functions $f$ on $\Omega$ for which the quantity
\begin{equation*}\label{Lorentz quasi norm}
||f||_{p,q}=||f||_{L^{p,q}(\Omega)}=||t^{\frac{1}{p}-\frac{1}{q}}f_*(t)||_{L^q(0,|\Omega|)}
\end{equation*}
is finite, where $||\cdot||_{L^q(0,|\Omega|)}$ is the norm in the Lebesgue space $L^q$.

For $1< p<+\infty$, the following equivalence holds,
\begin{equation*}\label{Lorentz norm}
||f||_{p,q}\simeq ||t^{\frac{1}{p}-\frac{1}{q}}f_{**}(t)||_{L^q(0,|\Omega|)}=\left\{
             \hspace{-0.2cm} \begin{array}{ll}
                \displaystyle \left(\int_0^{|\Omega|}[t^{\frac{1}{p}}f_{**}(t)]^q\, \frac{dt}{t} \right)^{\frac{1}{q}} &  \hspace{-1mm}\hbox{if} \  1\leq q<+\infty \\
                \displaystyle \sup_{0<t<|\Omega|}t^{\frac{1}{p}}f_{**}(t)  &  \hspace{-1mm} \hbox{if} \  q=+\infty
              \end{array}
            \right.
\end{equation*}
up to multiplicative constants.
\end{definition}
In particular,
$$L^{p,p}(\Omega)=L^p(\Omega).$$
The inclusions among Lorentz spaces are given by
$$L^{p,q}(\Omega)\subset L^{p,r}(\Omega), \ \ \ \hbox{if} \ 1<p<+\infty, \ \ 1\leq q<r\leq +\infty;
 $$
$$ L^{p,q}(\Omega)\subset L^{r,s}(\Omega), \ \ \ \hbox{if} \  1<r<p<+\infty, \ \ 1\leq q,s\leq +\infty.  $$
In particular, for $1\leq q<p<r\leq +\infty$, we have
$$L^r(\Omega)\subset L^{p,q}(\Omega)\subset
 L^{p}(\Omega)\subset L^{p,r}(\Omega)\subset
L^{p,\infty}(\Omega)\subset L^{q}(\Omega).$$

\vspace{2mm}

\begin{definition}\textbf{Lorentz-Zygmund spaces} (see, e.g., \cite{Bennett_Sharpley}). \label{def_Lorentz-Zygmund_space}\\
Assume now, for simplicity, $|\Omega|=1$, $0< p,q\leq +\infty, \ \lambda\in\mathbb R$. The Lorentz-Zygmund space $L^{p,q}(\log L)^\lambda(\Omega)$ consists of all Lebesgue measurable functions $f$ on $\Omega$ such that
\begin{equation*}
\|f\|_{L^{p,q}(\log L)^\lambda(\Omega)}=
\left\{
     \begin{array}{ll}
     \left(\displaystyle\int_0^1\left[t^{\frac
1p}(1-\log t)^\lambda f_*(t)\right]^q
\frac{dt}{t}\right)^{\frac1q} , & \hbox{if} \
0< q<+\infty\\
     \displaystyle\sup_{0<t<1}t^{\frac{1}{p}}(1-\log t)^\lambda f_*(t)\, ,  \ & \hbox{if} \
\ q=+\infty
   \end{array}
 \right.
\end{equation*}
is finite.
\end{definition}
These spaces reduce to the Lorentz spaces for $\lambda=0$ and to the Zygmund spaces for $p=q$, \ $0<p<+\infty$:
$$ L^{p,q}(\log L)^0(\Omega)=L^{p,q}(\Omega), \ \ \ 0<p,q\leq +\infty, \ \ \ (\lambda=0)$$
$$ L^{p,p}(\log L)^\lambda(\Omega)=L^{p}(\log L)^\lambda(\Omega), \ \ \ 0<p<+\infty, \ \ \ (\lambda\in \mathbb R).$$

\vspace{1mm}
When $p=1$ and $\lambda=1$ the Zygmund space $L^1(\log L)^1(\Omega)$ is also denoted simply by $L(\log L)(\Omega)$.

\vspace{2mm}

\noindent \textbf{Inclusion relations} (see \cite[pp.31-33]{Bennett_Rudnick}): for $0<p\leq+\infty$, \ $0<q,s\leq +\infty$, \ $\lambda_1,\lambda_2\in \mathbb R$,

\vspace{0.1cm}

\begin{itemize}
\item  $ \displaystyle L^{p,q}(\log L)^{\lambda_1}\subset L^{r,s}(\log L)^{\lambda_2}, \ \ \ 0<r<p\leq+\infty;$
\vspace{2mm}

\item  $\displaystyle L^{p,q}(\log L)^{\lambda_1}\subset L^{p,s}(\log L)^{\lambda_2} $

\noindent whenever either $q\leq s, \ \lambda_1\geq \lambda_2$ \ or \  $\ q>s, \  \lambda_1+\dfrac{1}{q}>\lambda_2+\dfrac{1}{s}$;

\item $\displaystyle L^{\infty,q}(\log L)^{\lambda_1}\subset L^{\infty,s}(\log L)^{\lambda_2}, \ \ 0<q<s\leq \infty, \ \ \lambda_1+\dfrac{1}{q}=\lambda_2+\dfrac{1}{s}$.

\end{itemize}
In particular, for $0<q<p<r<\infty$ and for any $\varepsilon>0$, as consequence of the previous results, the following chain of inclusions holds (see \cite[p.192]{Persson1983}:
\begin{equation*}
L^p(\log L)^{\frac{1}{q}-\frac{1}{p}+\varepsilon}\subset L^{p,q}\subset L^p \subset L^{p,q}(\log L)^{\frac{1}{p}-\frac{1}{q}-\varepsilon},
\end{equation*}
\begin{equation*}
L^{p,r}(\log L)^{\frac{1}{p}-\frac{1}{r}+\varepsilon}\subset L^p\subset L^{p,r} \subset L^p(\log L)^{\frac{1}{r}-\frac{1}{p}-\varepsilon}.
\end{equation*}
All the previous inclusions are the sharpest possible in the sense that we can nowhere permit
that $\varepsilon=0$.

\vspace{2mm}

\begin{definition} \textbf{Grand and small Lebesgue spaces} (see \cite{Greco-Iwaniec-Sbordone,DiFratta_Fiorenza_2009,FFGKR_NA2018,Fiorenza-Formica-Gogatishvili-DEA2018} and references therein).\label{def_small_spaces}\\
Suppose $|\Omega|=1$. Let $1<p<\infty$ and $\alpha>0$. The grand Lebesgue space $L^{p),\alpha}(\Omega)$ is the set of all measurable functions such that the norm
\begin{equation*}
||f||_{L^{p),\alpha}(\Omega)}=||f||_{p),\alpha}=\sup_{0<\varepsilon<p-1}\left(\varepsilon^\alpha\int_\Omega |f|^{p-\varepsilon}\,dx\right)^{\frac{1}{p-\varepsilon}}
\end{equation*}
is finite. For $\alpha=1$ this space was defined by C. Sbordone  and T. Iwaniec in \cite{Iwaniec-Sbordone_Integrability} and it is denoted by $L^{p),1}=L^{p)}$.

The small Lebesgue space $L^{(p',\alpha}(\Omega)$, where $p'=\frac{p}{p-1}$, was introduced by A. Fiorenza in \cite{Fiorenza2000Collectanea} as the associate space to the grand Lebesgue space $L^{p),\alpha}(\Omega)$, that is $||\cdot||_{L^{(p',\alpha}(\Omega)}=||\cdot||_{(p',\alpha}$ is the
smallest functional defined on the measurable functions such that a kind of H\"older type inequality holds:
$$\int_\Omega f(x)\,g(x)\,dx\leq ||f||_{L^{p),\alpha}(\Omega)}||g||_{L^{(p',\alpha}(\Omega)}.$$
We have therefore $(L^{p),\alpha})^{'}=L^{(p',\alpha}$.
The equivalence with the following quasi-norms hold (see \cite{Fiorenza-Karadzhov,DiFratta_Fiorenza_2009}):
\begin{eqnarray*}
||f||_{L^{p),\alpha}(\Omega)} &\simeq &\sup_{0<t<1}(1-\log t)^{-\frac{\alpha}{p}} \left(\int_t^1 f^p_*(\sigma)\,d\sigma\right)^{\frac{1}{p}}\\
||g||_{L^{(p',\alpha}(\Omega)}&\simeq& \int_0^1 (1-\log t)^{\frac{\alpha}{p}-1}\left(\int_0^t f^{p'}_*(\sigma)\, d\sigma\right)^{\frac{1}{p'}}\,\frac{dt}{t}.
\end{eqnarray*}
\end{definition}
The space $L^{(p,1}$ is denoted by $L^{(p}$.

These spaces and their generalizations have an important role in applications to PDEs, in Calculus of Variations, and also in Probability and Statistics (see, e. g, \cite{ForKozOstr_Lithuanian,FOS_MathNach,FOS2021_JMAA} and references therein).

\vspace{2mm}

\begin{definition} \textbf{Generalized Gamma spaces with double weights} (see \cite{FFGKR_NA2018,AFFGR2020}). \label{Gamma space}\\
Suppose $|\Omega|=1$. Let $w_1,\ w_2$ be two weights on $(0,1)$, $m\in[1,+\infty]$, $1\leq p<+\infty$. Assume the conditions:

 \vspace{3mm}

\noindent \textbf{(c1)} \ $\exists \,k>0 \, : \, w_2(2t)\leq Kw_2(t), \ \forall t\in(0,1/2)$ ;\\[1mm]
\noindent \textbf{(c2)} \ $ \displaystyle \int_0^t w_2(\sigma)d\sigma \in L^{\frac mp}(0,1;w_1)$.
\vspace{1mm}

The generalized Gamma space $G\Gamma(p,m;w_1,w_2)(\Omega)=G\Gamma(p,m;w_1,w_2)$ with double weights is the set of all measurable functions $f:\Omega \to \mathbb R$ such that
$$\left(\int_0^t f_*^p(\sigma)w_2(\sigma)\,d\sigma\right)^{\frac{1}{p}}\in L^{m}(0,1;w_1),$$
endowed with the quasi-norm
$$ ||f||_{G\Gamma(p,m;w_1,w_2)}=\left[ \int_0^1 w_1(t)\Big(\int_0^t f_*^p(\sigma)w_2(\sigma)d\sigma\Big)^{\frac m p}dt \right]^{\frac 1 m},$$
with the usual change when $m=\infty$.

 \vspace{3mm}
If $w_2=1$ we denote $G\Gamma(p,m;w_1,1)=G\Gamma(p,m;w_1).$

\end{definition}

In \cite{AFF2022_JFAA} we define the $G\Gamma(p,m;w_1,w_2)$ spaces with a slight modification.

The scale of $G\Gamma(p,m;w_1,w_2)$ spaces is very general and covers many well-known
scales of spaces. In next examples we collect some particular cases.

\vspace{2mm}

\textbf{Examples.}

\begin{enumerate}

\item If $w$ is an integrable weight on $(0,1)$, then $G\Gamma(p,m;w)=L^p$ (see \cite[Remark 1, p.798]{Fiorenza_Rakotoson2008}).

\vspace{3mm}

\item Let $1\leq p,q <\infty$, \ $w_1$ an integrable weight on $(0,1)$, $w_2(t)=t^{\frac{q}{p}-1}$. Then
$$G\Gamma(q,m;w_1,w_2)=L^{p,q}.$$
In fact, the conditions \textbf{(c1)} and \textbf{(c2)} are satisfied.  The first is obvious, the second derives from
$$
\int_0^1 w_1(t) \left(\int_0^t \sigma^{\frac{q}{p}-1}\,d\sigma\right)^{\frac{m}{q}}dt\simeq \int_0^1 t^{\frac{m}{p}}w_1(t)\,dt \leq \int_0^1 w_1(t)\, dt<\infty.
$$

Now, if $f\in L^{p,q}$, we have
\begin{eqnarray*}
||f||_{G\Gamma(q,m;w_1,w_2)} &=& \left[ \int_0^1 w_1(t)\Big(\int_0^t f_*^q(\sigma)\, \sigma^{\frac{q}{p}-1}\,d\sigma\Big)^{\frac m q}dt \right]^{\frac 1 m}\\
& \leq & ||f||_{L^{p,q}}\left(\int_0^1 w_1(t)\,dt\right)^{\frac{1}{m}}<\infty,
\end{eqnarray*}
since $w_1$ is integrable on $(0,1)$. Therefore $f\in G\Gamma(q,m;w_1,w_2)$.

\vspace{1mm}

\noindent For the reverse inclusion $G\Gamma(q,m;w_1,w_2)\subseteq L^{p,q}$  see \cite[Prop. 2.1]{AFFGR2020}.

\vspace{3mm}

\item If $m=p$, \ $w(t)=t^{-1}(1-\log t)^{\theta p-2}$ and $\theta>\frac{1}{p}$, we have
$$G\Gamma(p,p;t^{-1}(1-\log t)^{\theta p-2})=L^p(\log L)^{\theta-\frac{1}{p}}$$
(see \cite[Proposition 6.2]{FFGKR_NA2018}),
which can be written also in this form
$$G\Gamma(p,p;t^{-1}(1-\log t)^{\theta p-1})=L^p(\log L)^{\theta}$$
(see \cite[Lemma 3.5]{Dominguez}).

\vspace{3mm}

\item $G\Gamma(p,1;t^{-1}(1-\log t)^{\frac{\theta}{p'}-1})=L^{(p,\theta}$, \ \ $p'=\frac{p}{p-1}$ \ (see \cite[p.813]{FFR-DIE2017}).

\vspace{3mm}

\item If $m=1,\  p>1, \ \gamma, \beta\in \mathbb R$, $\gamma>-1$, \ $\gamma +\frac{\beta}{p}+1>0$,  \\ $\theta=p'\left(\gamma +\frac{\beta}{p}+1\right)$, \ $w_1(t)=t^{-1}(1-\log t)^{\gamma}$, $w_2(t)=(1-\log t)^\beta$, then
$$G\Gamma(p,1; w_1,w_2)=L^{(p,\theta}$$
(see \cite[Corollary 2.7, p.10]{AFFGR2020}).

\vspace{3mm}

\item If  $m,p\in[1,\infty)$, \ $\gamma, \beta\in \mathbb R$, $\gamma>-1$, \ $\gamma +\beta\frac{ m}{p}+1<0$,  \\ \ $w_1(t)=t^{-1}(1-\log t)^{\gamma}$, $w_2(t)=(1-\log t)^\beta$, then
$$||f||^m_{G\Gamma(p,m; w_1,w_2)}\simeq \int_0^1 (1-\log t)^{\gamma +\beta\frac{ m}{p}+1}\left(\int_t^1 f_*^p(x)\,dx\right)^{\frac{m}{p}}\,\frac{dt}{t} $$
(see \cite[Lemma 2.4, p.9]{AFFGR2020}).
\end{enumerate}

Here we collect some properties of inclusion among the spaces defined above.
\begin{equation*}
L^{(p}\subsetneq L^p\subsetneq
L^{p,\infty}\subsetneq
L^{p)}\subsetneq L^{p,\infty}(\log L)^{-\frac{1}{p}} \ \ \hbox{(see \cite[p.669]{Fiorenza-Karadzhov})}.
\end{equation*}
\begin{equation*}
\bigcup_{\varepsilon>0} L^{p+\varepsilon} \subsetneq
  \bigcup_{\beta>1} L^p (\log L)^{\frac{\beta\theta}{p'-1}}
\subsetneq
L^{(p,\theta}(\Omega)\subsetneq
L^p (\log L)^{\frac{\theta}{p'-1}}\subsetneq L^{p}(\Omega)
\end{equation*}
(see \cite[p.26]{Fiorenza-Formica-Gogatishvili-DEA2018}).
\begin{equation*}
L^p\subsetneq L^p(\log L)^{-\theta}\subsetneq
L^{p),\theta}\subsetneq
\bigcap_{\alpha>1} L^p(\log L)^{-\alpha\theta}\subsetneq
\bigcap_{0<\varepsilon<p-1}L^{p-\varepsilon}
\end{equation*}
(see \cite[p.24]{Fiorenza-Formica-Gogatishvili-DEA2018}).

\vspace{2mm}

\section{Interpolation spaces with logarithm function}\label{Section_interpolation_spaces}

\subsection{Preliminaries}
Let $(X_0,||\cdot||_0),\ (X_1, ||\cdot||_1)$ be  two normed spaces continuously embedded in a same Hausdorff topological vector space, that is, $(X_0,X_1)$ is  a compatible couple.

For $f\in X_0+X_1,\ t>0$, the \emph{Peetre $K$-functional} is defined by
$$K(f,t)\dot=K(f,t;X_0,X_1)=\inf_{f=f_0+f_1}\Big(||f_0||_{X_0}+t||f_1||_{X_1}\Big),$$
where the infimum extends over all decompositions $f=f_0+f_1$ of $f$ with $f_0\in X_0$ and $f_1\in X_1$.

\vspace{1mm}

For $0\leq\theta\leq 1,\ \ 1\leq q\leq+\infty,\ \ \alpha\in\mathbb R,$ the \emph{logarithmic interpolation space} $(X_0,X_1)_{\theta,q;\alpha}$ is the set of all functions $f\in X_0+X_1$ such that
$$ ||f||_{\theta,q;\alpha}=||t^{-\theta-\frac1q}(1-\log t)^\alpha K(f,t)||_{L^q(0,1)}<\infty.$$
For the limiting cases it only makes sense to consider  $\theta=0$ and $\alpha\geq -1/q$ or $\theta=1$ and $\alpha<-1/q$ or $\theta=1, q=\infty$ and $\alpha=0$.


\vspace{1mm}

We refer, for example, to \cite{Bergh_Lofstrom,Evans_Opic_Pick,Gogatishvili_Opic_Trebels}.

\vspace{1mm}

The spaces $(X_0,X_1)_{0,q;\alpha}$ and $(X_0,X_1)_{1,q;\alpha}$ produce spaces which are “very close” to $X_0$ and to $X_1$ respectively.

\vspace{1mm}

If $X_1\subset X_0$ and $\alpha=0$, the space $(X_0,X_1)_{\theta,q;0}$ reduces to the classical real interpolation space $(X_0,X_1)_{\theta,q}$ defined by J. Peetre.

\vspace{1mm}

We recall some properties (see \cite[p.46]{Bergh_Lofstrom}).
\begin{enumerate}
\item $(X_0,X_1)_{\theta,q}=(X_1,X_0)_{1-\theta,q}$

\vspace{1mm}

\item $(X_0,X_1)_{\theta,q}\subset (X_0,X_1)_{\theta,r}$ \ if \ $1\leq q\leq r\leq +\infty$.

\vspace{1mm}

\item If $X_1\subset X_0$ and $\theta_0<\theta_1$, then $(X_0,X_1)_{\theta_1,q}\subset (X_0,X_1)_{\theta_0,q}$.

\end{enumerate}

\begin{theorem}{\rm (Associate space of an interpolation space).}\label{duality theorem}

Let $X_0, X_1$ be two Banach function spaces, such that $X_1\subset X_0$. Let $1\leq q<+\infty, \ \alpha\in \mathbb R, \ 0<\theta<1$.
Then 
$$[(X_0,X_1)_{\theta,q;\alpha}]^{'}=(X_1^{'},X_0^{'})_{1-\theta,q';-\alpha}, \ \ \ \frac{1}{q}+\frac{1}{q'}=1,$$
where $X^{'}_i$ is the associate space of $X_i, \ i=0,1$.

\end{theorem}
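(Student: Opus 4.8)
The plan is to reduce the statement to the known duality formula for the classical real interpolation method by absorbing the logarithmic weight into one of the spaces. First I would recall the Peetre duality theorem in its standard form: for a compatible couple of Banach function spaces with $X_1 \subset X_0$ and $0 < \theta < 1$, $1 \le q < \infty$, one has $[(X_0,X_1)_{\theta,q}]' = (X_1',X_0')_{1-\theta,q'}$ with equivalent norms (this is the classical Peetre result, valid in the Banach function space setting where the associate space is a genuine norming dual). The task is then to track what happens when the logarithmic factor $(1-\log t)^\alpha$ is inserted into the $K$-functional weight.

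The key step is to rewrite $\|f\|_{\theta,q;\alpha} = \|t^{-\theta-1/q}(1-\log t)^\alpha K(f,t)\|_{L^q(0,1)}$ using the symmetry $K(f,t;X_0,X_1) = t\,K(f,t^{-1};X_1,X_0)$, and more importantly to identify the $\theta,q;\alpha$-space as a \emph{retract} of a suitable weighted $\ell^q$-valued space, exactly as in the proof of the non-logarithmic case in Bergh--Löfström. Concretely, I would introduce the map $J: f \mapsto (K(f,t))_{t}$ into $L^q((0,1); w)$ with weight $w(t) = t^{-\theta q - 1}(1-\log t)^{\alpha q}$, show it is (equivalent to) an isometric embedding onto a complemented subspace, and then dualize: the dual of $L^q((0,1);w)$ is $L^{q'}((0,1);w^{1-q'})$, and computing the exponent $w^{1-q'}$ gives precisely the weight $t^{(\theta q+1)(q'-1)}(1-\log t)^{-\alpha q(q'-1)}$, whose power of $t$ is $(1-\theta)q' - 1$ (using $1/q+1/q'=1$) and whose power of the logarithm is $-\alpha q'$. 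This matches the defining weight of $(X_1',X_0')_{1-\theta,q';-\alpha}$ once the roles of $X_0$ and $X_1$ are swapped, which accounts for the $\theta \mapsto 1-\theta$ and the appearance of the associate couple in reversed order.

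Putting the pieces together, I would argue: (i) $(X_0,X_1)_{\theta,q;\alpha}$ embeds as a $1$-complemented subspace of a weighted Lebesgue space of $X_0+X_1$-valued (or scalar, via the $K$-functional) sequences/functions; (ii) taking associate spaces commutes with passing to complemented subspaces and with the $K$-functional representation of the dual couple (here one invokes the Banach function space hypothesis so that $(X_i')' = X_i$ and the Köthe duality is exact, avoiding any second-dual subtleties); (iii) the weight computation above identifies the resulting space. The main obstacle I anticipate is precisely step (ii): justifying that the associate space of the $K$-functional-defined space is again given by a $K$-functional formula for the associate couple, i.e.\ the "duality commutes with the $K$-method" principle, in the presence of the logarithmic weight and only associate (Köthe dual) rather than full topological dual. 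This is where the restriction $1 \le q < \infty$ and the Banach-function-space structure of $X_0, X_1$ are essential, and it is worth checking that the admissibility condition on $(\theta, q, \alpha)$ in the non-limiting range $0<\theta<1$ keeps both the space and its claimed associate nontrivial so that the equivalence of norms is meaningful. The remaining verifications — that the power of $t$ really is $(1-\theta)q'-1$ and the logarithmic exponent is $-\alpha$ in the normalization $\|\cdot\|_{1-\theta,q';-\alpha}$ — are routine exponent arithmetic that I would not spell out in detail.
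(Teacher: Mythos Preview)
The paper does not supply a proof of this theorem: it is stated as a background result and the text passes immediately to the next subsection. There is therefore no argument in the paper to compare your proposal against.

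On its own merits, your overall strategy---carry the logarithmic weight through the classical Peetre duality argument and check exponents---is the standard one and is sound. One technical point is worth tightening. The map $f\mapsto K(f,\cdot)$ that you write down is scalar-valued and does not, by itself, exhibit $(X_0,X_1)_{\theta,q;\alpha}$ as a complemented (retract) subspace of a weighted $L^q$; in the Bergh--L\"ofstr\"om proof the retract structure comes from the \emph{$J$-method} representation (decompose $f=\sum_\nu u_\nu$ with $u_\nu\in X_0\cap X_1$ and send $f$ to the sequence $(u_\nu)$ in a weighted $\ell^q(X_0\cap X_1)$), together with the equivalence of the $J$- and $K$-descriptions. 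That equivalence is known in the logarithmic scale (it appears, for instance, in the Evans--Opic--Pick and Gogatishvili--Opic--Trebels references already cited in the paper), so citing it would close the gap. The residual exponent bookkeeping---including the change of variable accompanying the swap $(X_0,X_1)\leftrightarrow(X_1',X_0')$, which is what actually turns your computed power of $t$ into $-(1-\theta)q'-1$---is routine, as you say.
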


\vspace{1mm}

\subsection{Identifications of some interpolation spaces}\hfill\\[1mm]
Here we collect some well known and new results concerning the interpolation spaces between Lebesgue, Lorentz, Lorentz-Zygmund, small Lebesgue and $G\Gamma$ spaces.

\vspace{2mm}

Let $0<\theta<1, \ 1\leq q\leq+\infty$. \\[1mm]
We have (see, e.g., \cite[Theorem 5.2.1, Theorem 5.3.1]{Bergh_Lofstrom}, \cite[Theorem 2, p.134]{Triebel_book}, \cite[Theorem 1.9, p.300]{Bennett_Sharpley}) 
\begin{eqnarray}
L^q &=& (L^{p_0},L^{p_1})_{\theta,q}, \ \ \ 1\leq p_0<p_1\leq\infty, \ \ \frac{1}{q}=\frac{1-\theta}{p_0}+\frac{\theta}{p_1}.
\end{eqnarray}
In particular,
\begin{eqnarray}
L^q &=& (L^1,L^\infty)_{1-\frac{1}{q},q}, \ \ \ 1<q<\infty\\
L^q &=& (L^p,L^\infty)_{\theta,q}, \ \ \ 1\leq p<q<\infty, \ \ \ \theta=1-\frac{p}{q}.
\end{eqnarray}
For $1\leq p_0<p_1\leq\infty, \ 1\leq q_0,q_1\leq \infty, \ \ \displaystyle\frac{1}{p}=\frac{1-\theta}{p_0}+\frac{\theta}{p_1}$,
\begin{eqnarray}\label{identificationLorentz}
L^{p,q} &= & (L^{p_0},L^{p_1})_{\theta,q}=(L^{p_0,q_0},L^{p_1,q_1})_{\theta,q};
\end{eqnarray}
if $p_0=p_1=p$,
\begin{eqnarray}
L^{p,q} &= & (L^{p,q_0},L^{p,q_1})_{\theta,q}, \ \ \ \hbox{with} \ \ \displaystyle\frac{1}{q}=\frac{1-\theta}{q_0}+\frac{\theta}{q_1}.
\end{eqnarray}

\vspace{1mm}

\begin{theorem} \label{Th identifications}{\rm (\textbf{Interpolation with logarithm function between\\ Lorentz spaces})}
{\rm (\cite[p. 908-909]{AFFGER2023})} 

\vspace{1mm}

Let $1\leq p_0<p_1\leq \infty$,  \ $1\leq q_0,q_1\leq \infty$, \ $1\leq q<\infty$, \ $\alpha\in \mathbb R$. \\ Let $0< \theta<1$ and $\alpha \in\mathbb R$, or $\alpha\geq - \frac 1q$ and $\theta=0$, or $\theta=1$ and $\alpha<-\frac 1q$.\\[1mm]
 Then the following identifications hold.

\vspace{2mm}

 \textbf{(1.)} \ Case $0<\theta<1$ and $\alpha\in \mathbb R$.

\centerline{$\displaystyle
||f||_{\left(L^{p_0,q_0},\ L^{p_1,q_1}\right)_{\theta,q;\alpha}}\simeq \left(\int_0^1 \left[t^{\frac{1-\theta}{p_0}+\frac{\theta}{p_1}} f_*(t) (1-\log t)^\alpha \right]^q \,\frac{dt}{t}\right)^{\frac{1}{q}},
$}

 which implies
\begin{eqnarray}\label{LZinterpolationLorentz}
\left(L^{p_0,q_0},\ L^{p_1,q_1}\right)_{\theta,q;\alpha}
=L^{p_\theta,q}(\log L)^{\alpha}, \ \ \ \frac{1}{p_\theta}=\frac{1-\theta}{p_0}+\frac{\theta}{p_1}.
\end{eqnarray}

\vspace{2mm}
As particular cases, for $1\leq r<p<m\leq\infty, \ \ 1\leq q<\infty, \ \alpha\in\mathbb R$, we have
\begin{eqnarray}
L^{p,q}(\log L)^\alpha &= & (L^{r,\infty},L^{m})_{\theta,q;\alpha}, \ \ \ \displaystyle\frac{1}{p}=\frac{1-\theta}{r}+\frac{\theta}{m}, \label{identification_LrinftyLm}\\[1mm]
L^{p,q}(\log L)^\alpha &= & (L^1,L^m)_{\theta,q;\alpha}, \ \ \ \theta=m'\left(1-\frac{1}{p}\right), \ \ m'=\frac{m}{m-1}\label{identification_L_L_LZ_3_parameters}\\[1mm]
L^{p,q}(\log L)^\alpha &= & (L^{r,\infty},L^{\infty})_{\theta,q;\alpha}=(L^{r},L^{\infty})_{\theta,q;\alpha}, \ \ \ \displaystyle \theta=1-\frac{r}{p}\label{identification_LrinftyLinfty}.
\end{eqnarray}

\vspace{3mm}

 \textbf{(2.)} \ Limiting case $\theta=0$, with $\alpha\geq - \dfrac 1q$ and $1\leq q_0<\infty$.
%
\begin{equation*}
||f||_{\left(L^{p_0,q_0},\ L^{p_1,q_1}\right)_{0,q;\alpha}}\simeq \left(\int_0^1\left[\left(\int_0^t s^{\frac{q_0}{p_0}-1}f_*^{q_0}(s)ds\right)^{\frac{1}{q_0}}(1-\log t)^{\alpha}\right]^q\, \frac{dt}{t}\right)^{\frac{1}{q}},
\end{equation*}
which yields
\begin{eqnarray}\label{LorentLorentz_GGamma}
\left(L^{p_0,q_0},\ L^{p_1,q_1}\right)_{0,q;\alpha}
= G\Gamma(q_0,q;w_1,w_2),
\end{eqnarray}
where $w_1(t)=t^{-1}(1-\log t)^{\alpha q}, \ w_2(t)=t^{\frac{q_0}{p_0}-1}, \ t\in (0,1)$.

\vspace{3mm}

As particular case, for $1<p\leq \infty$, $p_0=q_0=1$, \ $p_1=q_1=p$, we have
\begin{equation}\label{L1LpGGamma}
(L^1,L^p)_{0,q;\alpha}=G\Gamma(1,q;w_1).
\end{equation}

\vspace{2mm}

If  $1<q_0=p_0<p_1<\infty$, \ $1\leq q_1\leq \infty$, \ $q=1$, \  $\alpha>-1$,  \ $\alpha_0=p'_0(\alpha +1)$, \
$\dfrac1{p_0}+\dfrac1{p'_0}=1$, we have the small Lebesgue spaces (see also \cite[Theorem 4.5]{AFH2020_Mediterr})
\begin{equation}
(L^{p_0},L^{p_1,q_1})_{0,1;\alpha}=L^{(p_0,\alpha_0}(\Omega),
\end{equation}
since
\begin{equation*}
||f||_{\left(L^{p_0},\ L^{p_1,q_1}\right)_{0,1;\alpha}}\!\simeq \!\int_0^1\left(\int_0^t f_*^{p_0}(s)ds\right)^{\frac{1}{p_0}}\!(1-\log t)^{\frac{\alpha_0}{p_0^{\prime}}-1}\, \frac{dt}{t}\simeq ||f||_{L^{(p_0,\alpha_0}(\Omega)}.
\end{equation*}

\vspace{2mm}

In the case $q=1$, \ $p_1=q_1$  and $p_0=q_0$, we recover (see \cite[Propositions 4.1 and 4.2, p.437]{FFGKR_NA2018})
$$(L^{p_0},L^{p_1})_{0,1;\frac{\alpha}{p_0^{\prime}}-1}=L^{(p_0,\alpha}(\Omega), \ \ 1<p_0<p_1\leq \infty.$$

\vspace{3mm}

\textbf{(3.)} \ Limiting case $\theta=0$, with $\alpha\geq - \dfrac 1q$ and $ q_0=\infty$.
\begin{equation*}
||f||_{\left(L^{p_0,\infty},\ L^{p_1,q_1}\right)_{0,q;\alpha}}\simeq \left(\int_0^1 \left(\esssup_{0<s<t} s^{\frac{1}{p_0}}\, f_*(s)\right)^q (1-\log t)^{\alpha q}\, \frac{d t}{t}\right)^{\frac{1}{q}}.
\end{equation*}

Therefore, we have
\begin{equation}\label{LorentzLebesguetheta0}
(L^{p_0,\infty},\ L^{p_1,q_1})_{0,q;\alpha}=G\Gamma(\infty, q;w_1,w_2),
\end{equation}
where
$w_1(t)=t^{-1}(1-\log t)^{\alpha q}$, \ $w_2(t)=t^{\frac{1}{p_0}}$, \  $t\in (0,1)$.

\vspace{5mm}

\textbf{(4.)} \ Limiting case $\theta=1$, with $\alpha<-\frac{1}{q}$ and $1\leq q_1<\infty$.
\begin{equation*}
||f||_{\left(L^{p_0,q_0},\ L^{p_1,q_1}\right)_{1,q;\alpha}}\simeq \left(\int_0^1\left(\int_t^1 s^{\frac{q_1}{p_1}-1}f_*^{q_1}(s)ds\right)^{\frac{q}{q_1}}(1-\log t)^{\alpha q}\, \frac{dt}{t}\right)^{\frac{1}{q}}.
\end{equation*}
If, in particular, $q_1=p_1<\infty$, \ $\alpha=\dfrac{\gamma}{q}+\dfrac{\beta}{p_1}$ \ with \ $\gamma>-1$, $\beta\in\mathbb R$, \ $\gamma +\beta \dfrac{q}{p_1}+1<0$, so that $\alpha<-1/q$, we have
\begin{equation*}\label{theta1}
||f||_{\left(L^{p_0,q_0},\ L^{p_1}\right)_{1,q;\alpha}}\simeq \left(\int_0^1\left(\int_t^1 f_*^{p_1}(s)ds\right)^{\frac{q}{p_1}}(1-\log t)^{\gamma +\beta \frac{q}{p_1}}\, \frac{dt}{t}\right)^{\frac{1}{q}}
\end{equation*}
and, from \cite[ Lemma 2.4, p.9]{AFFGR2020},
\begin{equation}\label{Ggammatheta1}
\left(L^{p_0,q_0},\ L^{p_1}\right)_{1,q;\alpha}= G\Gamma(p_1,q;w_1,w_2).
\end{equation}
with $w_1(t)=t^{-1}(1-\log t)^\gamma$, \ $ w_2(t)=(1-\log t)^\beta$.

\vspace{5mm}

\textbf{(5.)} \ Limiting case $\theta=1$, with $\alpha<-\dfrac 1q$ and  $q_1=+\infty$.
\begin{equation*}
||f||_{\left(L^{p_0,q_0},\ L^{p_1,\infty}\right)_{1,q;\alpha}}\simeq \left(\int_0^1 \left(\esssup_{0<s<t} s^{\frac{1}{p_1}}\, f_*(s)\right)^q (1-\log t)^{\alpha q} \, \frac{d t}{t}\right)^{\frac{1}{q}}.
\end{equation*}
Therefore we have
\begin{equation}
(L^{p_0,q_0},\ L^{p_1,\infty})_{1,q;\alpha}=G\Gamma(\infty, q;w_1,w_2) ,
\end{equation}
where $w_1(t)=t^{-1}(1-\log t)^{\alpha q}$, \ $w_2(t)=t^{\frac{1}{p_1}}$, \  $t\in (0,1)$.

\end{theorem}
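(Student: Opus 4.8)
The strategy is to reduce the whole statement to a single Holmstedt-type formula for the Peetre $K$-functional of a couple of Lorentz spaces, and then to substitute that formula into the definition $\|f\|_{\theta,q;\alpha}=\|t^{-\theta-1/q}(1-\log t)^{\alpha}K(f,t)\|_{L^{q}(0,1)}$ and simplify the resulting weighted double integrals by Hardy-type inequalities. Concretely, with $|\Omega|=1$, $1\le p_{0}<p_{1}\le\infty$ and $\tfrac1\lambda=\tfrac1{p_{0}}-\tfrac1{p_{1}}$, the first step is to prove, for $0<t<1$,
\[
K(f,t;L^{p_{0},q_{0}},L^{p_{1},q_{1}})\simeq\Big(\int_{0}^{t^{\lambda}}\!\big(s^{1/p_{0}}f_{*}(s)\big)^{q_{0}}\tfrac{ds}{s}\Big)^{1/q_{0}}+t\Big(\int_{t^{\lambda}}^{1}\!\big(s^{1/p_{1}}f_{*}(s)\big)^{q_{1}}\tfrac{ds}{s}\Big)^{1/q_{1}},
\]
with the two integrals replaced by the essential suprema of $s^{1/p_{0}}f_{*}(s)$ over $(0,t^{\lambda})$, resp.\ of $s^{1/p_{1}}f_{*}(s)$ over $(t^{\lambda},1)$, when $q_{0}=\infty$ or $q_{1}=\infty$. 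This is the classical Holmstedt formula: the bound ``$\lesssim$'' follows by testing the near-minimal decomposition $f=f\chi_{\{|f|>f_{*}(t^{\lambda})\}}+(f-f\chi_{\{|f|>f_{*}(t^{\lambda})\}})$, and ``$\gtrsim$'' by estimating an arbitrary admissible decomposition from below via the monotonicity of $f_{*}$ and the inequality $(f_{0}+f_{1})_{*}(s)\le f_{0*}(s/2)+f_{1*}(s/2)$.

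For the generic case $0<\theta<1$, one inserts this formula into $\|\cdot\|_{\theta,q;\alpha}$, splits the $L^{q}$-norm of the sum as the sum of the $L^{q}$-norms of the two pieces, and changes variables $t\mapsto t^{1/\lambda}$; since $1-\log(t^{1/\lambda})\simeq 1-\log t$ on $(0,1)$ the logarithmic weight is unchanged up to constants. Each piece becomes a weighted Hardy, resp.\ dual Hardy, operator applied to $s^{1/p_{i}}f_{*}(s)$ with a power weight whose exponent is strictly on the bounded side precisely because $0<\theta<1$, so both Hardy inequalities apply (the slowly varying factor $(1-\log t)^{\alpha q}$ being harmless) and both pieces collapse to $\big(\int_{0}^{1}[t^{(1-\theta)/p_{0}+\theta/p_{1}}f_{*}(t)(1-\log t)^{\alpha}]^{q}\,dt/t\big)^{1/q}$. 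Comparing with Definition \ref{def_Lorentz-Zygmund_space} identifies this with $\|f\|_{L^{p_{\theta},q}(\log L)^{\alpha}}$, i.e.\ \eqref{LZinterpolationLorentz}, and the particular cases \eqref{identification_LrinftyLm}, \eqref{identification_L_L_LZ_3_parameters}, \eqref{identification_LrinftyLinfty} follow by specialising $(q_{0},q_{1})$ and reading $\theta$ off from $\tfrac1p=\tfrac{1-\theta}{p_{0}}+\tfrac{\theta}{p_{1}}$.

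The limiting cases $\theta=0$ and $\theta=1$ are the ones that must be done by hand. The same substitution shows that after integration only the first piece of the Holmstedt sum survives when $\theta=0$, and only the second when $\theta=1$, the other being absorbed via a Hardy inequality and the monotonicity of $f_{*}$; the admissibility thresholds $\alpha\ge-1/q$ (for $\theta=0$) and $\alpha<-1/q$ (for $\theta=1$) recorded in the Preliminaries are exactly what makes the surviving endpoint Hardy inequality valid with the weight $(1-\log t)^{\alpha q}$. For $\theta=0$ one is left with $\big(\int_{0}^{1}[(\int_{0}^{t}s^{q_{0}/p_{0}-1}f_{*}^{q_{0}}(s)\,ds)^{1/q_{0}}(1-\log t)^{\alpha}]^{q}\,dt/t\big)^{1/q}$, which is verbatim the $G\Gamma(q_{0},q;w_{1},w_{2})$ quasi-norm of Definition \ref{Gamma space} with $w_{1}(t)=t^{-1}(1-\log t)^{\alpha q}$ and $w_{2}(s)=s^{q_{0}/p_{0}-1}$ (and the $\esssup$ analogue, with $w_{2}(s)=s^{1/p_{0}}$, when $q_{0}=\infty$); checking \textbf{(c1)}, \textbf{(c2)} is immediate and gives \eqref{LorentLorentz_GGamma}, \eqref{L1LpGGamma}, \eqref{LorentzLebesguetheta0}. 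For $\theta=1$ the surviving integral is $\big(\int_{0}^{1}(\int_{t}^{1}s^{q_{1}/p_{1}-1}f_{*}^{q_{1}}(s)\,ds)^{q/q_{1}}(1-\log t)^{\alpha q}\,dt/t\big)^{1/q}$, turned into the $G\Gamma(p_{1},q;w_{1},w_{2})$ quasi-norm by \cite[Lemma 2.4]{AFFGR2020} after writing $\alpha=\gamma/q+\beta/p_{1}$, which is \eqref{Ggammatheta1}; the $q_{1}=\infty$ subcase is its $\esssup$ version. The small Lebesgue identifications $(L^{p_{0}},L^{p_{1},q_{1}})_{0,1;\alpha}=L^{(p_{0},\alpha_{0}}$ follow from the $\theta=0$ formula with $q_{0}=p_{0}$, $q=1$ and the representation of $\|\cdot\|_{(p',\alpha}$ recalled in Definition \ref{def_small_spaces} (alternatively via Theorem \ref{duality theorem} applied to the known interpolation identity for grand Lebesgue spaces; cf.\ also \cite[Theorem 4.5]{AFH2020_Mediterr}).

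The main obstacle is the borderline Hardy analysis underlying the limiting cases: one has to carry the logarithmic weight $(1-\log t)^{\alpha}$ through the $K$-functional substitution, verify that $\alpha\ge-1/q$, resp.\ $\alpha<-1/q$, is \emph{exactly} the admissibility threshold for the surviving endpoint inequality, and separately handle the $q_{0}=\infty$ and $q_{1}=\infty$ subcases, where the Hardy operator is replaced by the maximal operator $g\mapsto\esssup_{0<s<\,\cdot}g(s)$ and its corresponding weighted estimate is needed. The remaining ingredients — truncating at $t=1$ on the finite-measure domain, passing between $f_{*}(t)$ and its integral averages by monotonicity, and verifying \textbf{(c1)}--\textbf{(c2)} — are routine.
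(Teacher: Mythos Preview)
The paper does not give a standalone proof of this theorem: it is quoted from \cite[p.~908--909]{AFFGER2023}, and the only arguments present are the brief in-line justifications embedded in the statement (the computation for the small Lebesgue case and the appeal to \cite[Lemma 2.4]{AFFGR2020} in part~(4.)). So there is no ``paper's own proof'' to compare against in any substantive sense.

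Your approach---Holmstedt's formula for $K(f,t;L^{p_0,q_0},L^{p_1,q_1})$, substitution into the $\|\cdot\|_{\theta,q;\alpha}$ functional, change of variable $t\mapsto t^{1/\lambda}$, and weighted Hardy/dual Hardy inequalities to collapse the double integrals---is the standard and correct route, and is exactly how results of this type are established in the cited literature (Holmstedt's formula appears e.g.\ in \cite{Bennett_Sharpley}, and the logarithmic Hardy analysis is the content of works such as \cite{Evans_Opic_Pick,Gogatishvili_Opic_Trebels}). Your identification of the thresholds $\alpha\ge -1/q$ for $\theta=0$ and $\alpha<-1/q$ for $\theta=1$ as precisely the borderline Hardy conditions is right, and your handling of the $q_0=\infty$, $q_1=\infty$ subcases via the maximal operator is also correct. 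In short, your sketch is sound and supplies what the paper omits; the only thing to add is that in a fully written-out version you would want to state explicitly which weighted Hardy inequality (with logarithmic weight) you invoke at each step, since the endpoint cases are delicate and that is where the parameter restrictions enter.
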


\vspace{1cm}

The following result can be found in \cite[Lemma 5.5, pag.100]{Gogatishvili_Opic_Trebels}, in the more general setting of the Lorentz-Karamata spaces, of which the Lorentz-Zygmund spaces are a special case.

\begin{theorem}{\rm (\textbf{Interpolation with logarithm function between\\ Lorentz-Zygmund spaces})}

\vspace{1mm}

Let $1\leq p_0< p_1\leq +\infty, \ 1\leq q_0,q_1\leq \infty, \ 1\leq q<\infty$,  $0<\theta<1$, \ $\alpha_0, \alpha_1, \alpha\in \mathbb R$. Then

\begin{equation}
\left(L^{p_0,q_0}(\log L)^{\alpha_0},\ L^{p_1,q_1}(\log L)^{\alpha_1}\right)_{\theta,q;\alpha}
=L^{p_\theta,q}(\log L)^{\alpha_\theta}
\end{equation}
with $\dfrac 1{p_\theta}=\dfrac{1-\theta}{p_0}+\dfrac\theta{p_1}$\; and \; 
 $\alpha_\theta=(1-\theta)\alpha_0+\theta \alpha_1+\alpha. $

\vspace{2mm}

In particular, for $\alpha_0=\alpha_1=0$, we recover \eqref{LZinterpolationLorentz} in case \textbf{(1.)}
\begin{eqnarray}
\left(L^{p_0,q_0},\ L^{p_1,q_1}\right)_{\theta,q;\alpha}
=L^{p_\theta,q}(\log L)^{\alpha}.
\end{eqnarray}
\end{theorem}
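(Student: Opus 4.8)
The plan is to derive this from the known identification of Lorentz--Zygmund spaces with logarithmic interpolation spaces, namely case \textbf{(1.)} of Theorem~\ref{Th identifications}, via the reiteration (stability) theorem for the logarithmic interpolation functor. First I would recall that by Theorem~\ref{Th identifications}\,\textbf{(1.)}, for any admissible parameters one has
$$L^{p,q}(\log L)^{\lambda}=\bigl(L^{a_0,r_0},L^{a_1,r_1}\bigr)_{\eta,q;\lambda}$$
with $\tfrac1p=\tfrac{1-\eta}{a_0}+\tfrac{\eta}{a_1}$, for a suitable choice of the base couple $(L^{a_0,r_0},L^{a_1,r_1})$ and exponent $\eta\in(0,1)$. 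Applying this to each endpoint $L^{p_0,q_0}(\log L)^{\alpha_0}$ and $L^{p_1,q_1}(\log L)^{\alpha_1}$, I would represent both of them as logarithmic interpolation spaces of a \emph{single} ambient couple, say $(L^1,L^\infty)$ (or, if $p_1=\infty$ requires care, a couple $(L^{a_0},L^{a_1})$ with $a_0<p_0\le p_1<a_1$), with parameters $(\theta_0,q_0;\alpha_0)$ and $(\theta_1,q_1;\alpha_1)$ respectively, where $\theta_i$ is determined by $p_i$.

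The second and central step is the reiteration theorem for the three-parameter (logarithmic) functor: there is a formula of the form
$$\Bigl(\,(A_0,A_1)_{\theta_0,q_0;\alpha_0},\ (A_0,A_1)_{\theta_1,q_1;\alpha_1}\,\Bigr)_{\theta,q;\alpha}
=(A_0,A_1)_{\vartheta,q;\beta},$$
with $\vartheta=(1-\theta)\theta_0+\theta\theta_1$ and $\beta=(1-\theta)\alpha_0+\theta\alpha_1+\alpha$ (in the regime $\theta_0\ne\theta_1$, $0<\theta<1$). This is precisely the statement quoted from \cite[Lemma~5.5, p.~100]{Gogatishvili_Opic_Trebels} in the Lorentz--Karamata setting; I would either invoke it directly or reprove it by the standard $K$-functional estimates, using that $K(f,t;(A_0,A_1)_{\theta_0,q_0;\alpha_0},(A_0,A_1)_{\theta_1,q_1;\alpha_1})$ is comparable (by the equivalence theorem / fundamental lemma of interpolation) to a weighted $L^q$-type average of $s^{-\theta_i}(1-\log s)^{\alpha_i}K(f,s;A_0,A_1)$, and then carrying out a Hardy-inequality computation. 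Plugging $\vartheta$ back through the relation $\tfrac1{p_\vartheta}=\tfrac{1-\vartheta}{a_0}+\tfrac{\vartheta}{a_1}$ and expanding $\vartheta$ in terms of $\theta_0,\theta_1$ gives exactly $\tfrac1{p_\theta}=\tfrac{1-\theta}{p_0}+\tfrac{\theta}{p_1}$, while $\beta$ gives $\alpha_\theta=(1-\theta)\alpha_0+\theta\alpha_1+\alpha$; one last application of Theorem~\ref{Th identifications}\,\textbf{(1.)} (now in reverse) identifies $(A_0,A_1)_{\vartheta,q;\beta}$ with $L^{p_\theta,q}(\log L)^{\alpha_\theta}$, which is the claim.

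The main obstacle I anticipate is bookkeeping rather than conceptual: ensuring the parameter ranges line up so that Theorem~\ref{Th identifications}\,\textbf{(1.)} genuinely applies at both endpoints (in particular choosing the ambient couple $(L^{a_0},L^{a_1})$ so that $\theta_0,\theta_1\in(0,1)$ and $\theta_0\ne\theta_1$, which forces $a_0<p_0$ and $p_1<a_1$, with the case $p_1=+\infty$ handled by letting $a_1\to\infty$ or treating it separately via \eqref{identification_LrinftyLinfty}), and verifying that the secondary indices $q_0,q_1$ — which disappear in the identification \eqref{LZinterpolationLorentz} — do not obstruct the reiteration. Since the target secondary index is the single $q$ coming from the outer functor, the values of $q_0,q_1$ are immaterial to the final space, but one must still justify this, which again is exactly the content of the equivalence formulas in Theorem~\ref{Th identifications}. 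Once the reiteration lemma is in hand, the remainder is a direct substitution, so the only real work is confirming the hypotheses of \cite[Lemma~5.5]{Gogatishvili_Opic_Trebels} (or its reproof) cover the stated range $1\le p_0<p_1\le\infty$, $1\le q_0,q_1\le\infty$, $1\le q<\infty$, $0<\theta<1$.
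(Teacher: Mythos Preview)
Your approach is valid and would lead to the result, but it takes a genuinely different route from the paper. The paper does \emph{not} pass through a reiteration theorem for the abstract couple $(A_0,A_1)$; instead it applies \cite[Lemma~5.5]{Gogatishvili_Opic_Trebels} directly as an identification formula for interpolation of Lorentz--Karamata spaces,
\[
\bigl(L_{p_0,q_0;b_0},L_{p_1,q_1;b_1}\bigr)_{\theta,q;b}=L_{p_\theta,q;b_\theta^{\#}},
\qquad
b_\theta^{\#}(t)=b_0^{1-\theta}(t)\,b_1^{\theta}(t)\,b\!\left(t^{\frac1{p_0}-\frac1{p_1}}\tfrac{b_0(t)}{b_1(t)}\right),
\]
specializes to the logarithmic weights $b_i(t)=(1-\log t)^{\alpha_i}$, $b(t)=(1-\log t)^{\alpha}$, and then shows by an elementary limit computation that $b_\theta^{\#}(t)\simeq (1-\log t)^{(1-\theta)\alpha_0+\theta\alpha_1+\alpha}$. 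No ambient couple $(L^{a_0},L^{a_1})$, no representation of the endpoints via Theorem~\ref{Th identifications}, and no separate reiteration step are needed; the issues you flag about choosing $a_0<p_0$, $p_1<a_1$, handling $p_1=\infty$, and the irrelevance of $q_0,q_1$ simply do not arise.

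One small correction: \cite[Lemma~5.5]{Gogatishvili_Opic_Trebels} is \emph{not} the abstract reiteration formula you write down, but precisely the Lorentz--Karamata identification above. Your reiteration formula is of course true (and, specialized to $(A_0,A_1)=(L^1,L^\infty)$, equivalent to the theorem), but you would need to cite it from elsewhere or prove it via the $K$-functional/Hardy argument you sketch. The paper's route is shorter because all the analytic work is already packaged in the cited Lorentz--Karamata result; your route is more modular and avoids the slowly-varying-function formalism, at the cost of the extra embedding and reiteration bookkeeping you correctly anticipate.
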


\vspace{2mm}

\begin{proof}
In \cite{Gogatishvili_Opic_Trebels}, for $\Omega\subset \mathbb R^n$ bounded with $|\Omega|=1$, the Lorentz-Karamata spaces are defined through
\begin{equation}\label{norm Lorentz-Karamata}
\|f\|_{L_{p,q;b}(\Omega)}:=||t^{\frac{1}{p}-\frac{1}{q}}b(t)f_*(t)||_{L^q(0,1)},
\end{equation}
where $b(t)$ is a slowly varying function on $(0,1)$. It has been proved that
\begin{equation}\label{interpolationLorentzKaramata}
\left(L_{p_0,q_0;b_0}, L_{p_1,q_1;b_1}\right)_{\theta,q;b}=L_{p_\theta,q;b_\theta^{\#}}
\end{equation}
where
\begin{equation}
\frac{1}{p_\theta}=\frac{1-\theta}{p_0}+\frac{\theta}{p_1}
\end{equation}
and
\begin{equation}
b_\theta^{\#}(t)=b_0^{1-\theta}(t) \,b_1^\theta(t) \, b\left(t^{\frac{1}{p_0}-\frac{1}{p_1}}\frac{b_0(t)}{b_1(t)}\right), \ \ \hbox{for all} \ \ t\in (0,1).
\end{equation}

 Choosing $b_0(t)=(1-\log t)^{\alpha_0}$, \ $b_1(t)=(1-\log t)^{\alpha_1}$, \ $b(t)=(1-\log t)^\alpha$, we have
$$L_{p_0,q_0;b_0}(\Omega)=L^{p_0,q_0}(\log L)^{\alpha_0}(\Omega), \ \ \ L^{p_1,q_1}(\log L)^{\alpha_1}(\Omega)= L^{p_1,q_1}(\log L)^{\alpha_1}(\Omega) .$$
Therefore, the left hand side of \eqref{interpolationLorentzKaramata} becomes
$$
\left(L^{p_0,q_0}(\log L)^{\alpha_0}, L^{p_1,q_1}(\log L)^{\alpha_1}\right)_{\theta,q;\alpha}.
$$
Now, to identify the space $L_{p_\theta,q;b_\theta^{\#}}$, we determine $b_\theta^{\#}$:
\begin{equation*}
b_\theta^{\#}(t)=(1-\log t)^{\alpha_0 (1-\theta)+\alpha_1 \theta}[1-\log(t^{\frac{1}{p_0}-\frac{1}{p_1}}(1-\log t)^{\alpha_0-\alpha_1})]^\alpha.
\end{equation*}
It is easy to see that
\begin{equation*}
\lim_{t\to 0^+} \frac{1-\log(t^{\frac{1}{p_0}-\frac{1}{p_1}}(1-\log t)^{\alpha_0-\alpha_1})}{1-\log t}=\frac{1}{p_0}-\frac{1}{p_1}
\end{equation*}
since
$$
 \lim_{t\to 0^+}\frac{1-\left(\frac{1}{p_0}-\frac{1}{p_1}\right)\log t}{1-\log t}=\frac{1}{p_0}-\frac{1}{p_1}, \ \ \lim_{t\to 0^+} \frac{(\alpha_0-\alpha_1)\log(1-\log t)}{1-\log t}=0.
$$
Therefore
$$
b_\theta^{\#}(t)\simeq (1-\log t)^{\alpha_0 (1-\theta)+\alpha_1 \theta}(1-\log t)^\alpha=(1-\log t)^{\alpha_0 (1-\theta)+\alpha_1 \theta+\alpha},
$$
hence
$$
L_{p_\theta,q;b_\theta^{\#}}=L^{p_\theta,q}(\log L)^{\alpha_0 (1-\theta)+\alpha_1 \theta+\alpha}=L^{p_\theta,q}(\log L)^{\alpha_\theta}.
$$

\end{proof}

\vspace{1mm}

In the limiting case $p_0=p_1$ and $q_0=q_1$, the following description of interpolation spaces between Lorentz–Zygmund spaces is given in \cite[(7.2), p.21]{AFF2022_JFAA}, where $G\Gamma$ spaces are defined in slightly different way, but that here we rewrite according our notations.

\vspace{2mm}

Let $0<p,q,r<\infty$, \ $0<\theta<1$,  \ $\alpha,\beta\in\mathbb R, \ \alpha<\beta$. Let
$$
w_1(t)=t^{-1}(1-\ln t)^{\theta r(\beta-\alpha)-1}, \ \ \ \ w_2(t)=t^{q/p-1}(1-\ln t)^{\alpha q}, \quad 0<t< 1. 
$$
Then
\begin{equation}
\left(L^{p,q}(\log L)^{\alpha}, L^{p,q}(\log
L)^{\beta}\right)_{\theta,r}=G\Gamma(q,r;w_1, w_2).
\end{equation}

\vspace{2mm}


\section{Interpolation of H{\"o}lderian mappings} \label{Section_Holderian_mappings}

Let $0<\alpha\leq 1$ and $X, Y$ two normed spaces.
A mapping

\vspace{2mm}

\centerline{${\mathcal T}:X\to Y$}

\vspace{2mm}

 is globally $\alpha$-H\"olderian, with H\"older constant $c$, if
$$
\exists \, c>0 \ : \ ||{\mathcal T}a-{\mathcal T}b||_Y\leq c ||a-b||^{\alpha}_X, \ \ \forall\, a,b\in X.
$$

\vspace{2mm}

In the sequel we will consider four normed spaces $X_1\subset X_0$, $Y_1\subset Y_0$ and
$${\mathcal T}: X_i\to Y_i, \ \ i=0,1,$$
 a nonlinear mapping satisfying, for $0<\alpha\leq 1$ and $\beta>0$, the following conditions:
\begin{enumerate}
\item \label{1mapT} $||{\mathcal T}a-{\mathcal T}b||_{Y_0}\leq f(||a||_{X_0},||b||_{X_0})||a-b||^\alpha_{X_0}, \ \ \forall\, a,b \in X_0$,

\vspace{2mm}

\item \label{2mapT} $||{\mathcal T}a||_{Y_1}\leq g(||a||_{X_0})||a||_{X_1}^\beta,\ \ \forall\,a\in X_1,$
\end{enumerate}

\vspace{2mm}

where $g$ is a continuous increasing function on $\mathbb R_+$, $f$ is continuous on $\mathbb R_+^2$ and increasing for any $\sigma\in \mathbb R_+$.

\vspace{2mm}

We put
 \begin{equation}\label{Gmax}
 G(\sigma)=\max\{g(2\sigma);f(\sigma,2\sigma)\}, \ \ \sigma\in\mathbb R_+.
 \end{equation}

\vspace{2mm}

\noindent In the next Theorems we extend some Tartar's results contained in \cite{Tartar1972BullFrance,Tartar1972}.


\begin{theorem}\label{AFFGHR_Th2-2}
Let $\lambda\in \mathbb R$ and $0<\alpha\leq 1$. Let $X_1\subset X_0$, $Y_1 \subset Y_0$ be normed spaces and assume that $X_1$ is dense in $X_0$. If
$${\mathcal T}: X_i\to Y_i, \ \ \hbox{is globally} \ \ \alpha\hbox{-H\"olderian}, \  \ i=0,1,$$
then, for $0\leq\theta\leq 1$ and $1\leq p\leq+\infty$,
$$ {\mathcal T}: (X_0,X_1)_{\theta,p;\lambda} \to (Y_0,Y_1)_{\theta,\frac{p}{\alpha};\lambda \alpha} \ \ \hbox{is} \ \  \alpha\hbox{-H\"olderian}, $$

\vspace{3mm}
\centerline{ i.e.  \ \ $ \displaystyle ||{\mathcal T}a-{\mathcal T}b||_{\theta,\frac{p}{\alpha};\lambda \alpha} \leq c\, ||a-b||^\alpha_{\theta,p;\lambda}, \ \ \ \forall\, a,b \in (X_0,X_1)_{\theta,p;\lambda}
$}

\end{theorem}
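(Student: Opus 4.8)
The plan is to estimate the $K$-functional of $\mathcal{T}a - \mathcal{T}b$ relative to the couple $(Y_0, Y_1)$ in terms of the $K$-functional of $a - b$ relative to $(X_0, X_1)$, and then feed this pointwise (in $t$) estimate into the definition of the norm $\|\cdot\|_{\theta, p/\alpha; \lambda\alpha}$. First I would fix $a, b \in (X_0,X_1)_{\theta,p;\lambda} \subset X_0$ and, for a given $t > 0$, choose a near-optimal decomposition $a - b = u_0 + u_1$ with $u_0 \in X_0$, $u_1 \in X_1$ realizing $K(a-b, t; X_0, X_1)$ up to a factor $2$. The natural candidate decomposition of $\mathcal{T}a - \mathcal{T}b$ in $Y_0 + Y_1$ is $\mathcal{T}a - \mathcal{T}b = (\mathcal{T}a - \mathcal{T}(b + u_1)) + (\mathcal{T}(b+u_1) - \mathcal{T}b)$: here the second piece lies in $Y_1$ because $b + u_1 - b = u_1 \in X_1$ and $\mathcal{T}: X_1 \to Y_1$ is $\alpha$-Hölderian, while the first piece is controlled in $Y_0$ since $a - (b+u_1) = u_0 \in X_0$ and $\mathcal{T}: X_0 \to Y_0$ is $\alpha$-Hölderian. (The density of $X_1$ in $X_0$ guarantees that $b + u_1$ is a legitimate point of $X_0$ and, more importantly, is used to pass from near-optimal decompositions to a clean estimate; I would invoke it exactly as in Tartar's original argument.)

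With $c$ denoting the (common, or maximal of the two) Hölder constant, this gives
$$
K(\mathcal{T}a - \mathcal{T}b, s; Y_0, Y_1) \le \|\mathcal{T}a - \mathcal{T}(b+u_1)\|_{Y_0} + s\,\|\mathcal{T}(b+u_1) - \mathcal{T}b\|_{Y_1} \le c\,\|u_0\|_{X_0}^\alpha + c\,s\,\|u_1\|_{X_1}^\alpha .
$$
The key trick is the choice of $s$ as a function of $t$: setting $s = t^\alpha$ and using the elementary inequality $x^\alpha + y^\alpha \le 2^{1-\alpha}(x+y)^\alpha$ (or simply $x^\alpha + y^\alpha \le 2(x+y)^\alpha$ for $x,y\ge 0$, $0<\alpha\le 1$) together with $\|u_0\|_{X_0} + t\|u_1\|_{X_1} \le 2K(a-b,t;X_0,X_1)$, one obtains
$$
K(\mathcal{T}a - \mathcal{T}b, t^\alpha; Y_0, Y_1) \le c'\,\big(K(a-b, t; X_0, X_1)\big)^\alpha
$$
with $c'$ depending only on $c$ and $\alpha$.

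It then remains to insert this into the logarithmic interpolation norm. Writing the $(Y_0,Y_1)_{\theta, p/\alpha; \lambda\alpha}$-norm of $\mathcal{T}a - \mathcal{T}b$ as an integral in the variable $s$ over $(0,1)$, I substitute $s = t^\alpha$ (so $ds/s = \alpha\, dt/t$, and $1 - \log s = 1 - \alpha\log t \simeq 1 - \log t$ near $0$, which handles the logarithmic weight up to constants, while $s^{-\theta} = t^{-\theta\alpha}$ and the exponent $p/\alpha$ together with the extra $\alpha$ from $ds/s$ reproduce the exponent $p$ inside). Concretely,
$$
\|\mathcal{T}a-\mathcal{T}b\|_{\theta,\frac p\alpha;\lambda\alpha}
= \Big\| s^{-\theta - \frac{\alpha}{p}} (1-\log s)^{\lambda\alpha} K(\mathcal{T}a-\mathcal{T}b, s) \Big\|_{L^{p/\alpha}(0,1)}
\lesssim \Big\| t^{-\theta\alpha - \frac{\alpha}{p}}(1-\log t)^{\lambda\alpha} K(a-b,t)^\alpha \Big\|_{L^{p/\alpha}(0,1)},
$$
and pulling the power $\alpha$ out of the $L^{p/\alpha}$-norm turns the right-hand side into $c\,\|t^{-\theta - 1/p}(1-\log t)^\lambda K(a-b,t)\|_{L^p(0,1)}^\alpha = c\,\|a-b\|_{\theta,p;\lambda}^\alpha$, which is the claim; the endpoint cases $p=\infty$ and $\theta \in \{0,1\}$ are handled by the same substitution with the integral replaced by a supremum. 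The main obstacle, and the only genuinely delicate point, is the bookkeeping in this change of variables — matching the three exponents ($s^{-\theta}$, the Lebesgue exponent, the log-power) simultaneously under $s = t^\alpha$ — and, slightly before that, being careful that the decomposition $\mathcal{T}a - \mathcal{T}b = (\mathcal{T}a - \mathcal{T}(b+u_1)) + (\mathcal{T}(b+u_1)-\mathcal{T}b)$ is admissible, i.e. that $b + u_1 \in X_0$ so both Hölder estimates apply; everything else is routine once $s = t^\alpha$ is chosen.
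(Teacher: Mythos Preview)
Your approach is correct and matches the paper's: the paper does not give an explicit proof of this theorem but states precisely your key $K$-functional inequality as Corollary~5.3, namely $K(\mathcal{T}a-\mathcal{T}b,\,t^\alpha;Y_0,Y_1)\le 2\max(M_0,M_1)\,[K(a-b,t;X_0,X_1)]^\alpha$, from which the result follows by the change of variables $s=t^\alpha$ exactly as you carry out.

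One small correction on the role of density. Density of $X_1$ in $X_0$ is \emph{not} needed to guarantee that $b+u_1\in X_0$ --- that is automatic since $u_1\in X_1\subset X_0$. The actual issue is that your estimate $\|\mathcal{T}(b+u_1)-\mathcal{T}b\|_{Y_1}\le c\|u_1\|_{X_1}^\alpha$ uses the $\alpha$-H\"older property of $\mathcal{T}:X_1\to Y_1$, which requires \emph{both} arguments $b$ and $b+u_1$ to lie in $X_1$; for a general $b\in X_0$ this fails. The fix (and this is exactly how the paper phrases Corollary~5.3) is to first establish the $K$-functional inequality for $a\in X_0$ and $b\in X_1$, and then extend to all $b\in X_0$ by approximating $b$ in $X_0$-norm by elements of $X_1$ and using the $\alpha$-H\"older continuity of $\mathcal{T}$ on $X_0$ together with the continuity of $K(\cdot,t)$ to pass to the limit.
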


\vspace{2mm}


\begin{theorem}\label{AFFGHR_Th2-1}
Let $\lambda\in \mathbb R$, $0<\alpha\leq 1$ and $\beta>0$, with $\alpha\leq\beta$. Let $X_1\subset X_0$, $Y_1 \subset Y_0$ be normed spaces and
$${\mathcal T}: X_i\to Y_i, \ \ i=0,1,$$
a mapping satisfying the assumptions \eqref{1mapT} and \eqref{2mapT}.

Then, for $0\leq\theta\leq 1$ and $1\leq p\leq+\infty$,
$${\mathcal T} \ \ \hbox{maps} \ \  (X_0,X_1)_{\theta,p;\lambda} \ \ \hbox{into} \ \ (Y_0,Y_1)_{\theta\frac\alpha\beta,\frac p\alpha;\lambda\alpha}$$
and
\begin{equation*}
||{\mathcal T}a||_{\theta\frac\alpha\beta,\frac p\alpha;\lambda \alpha}\lesssim
[(1+||a||_{X_0}^{\beta-\alpha})\,G(||a||_{X_0})]\,||a||^\alpha_{\theta,p;\lambda}.
\end{equation*}

\end{theorem}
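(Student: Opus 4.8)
The plan is to estimate the Peetre $K$-functional $K(\mathcal{T}a,\,t;Y_0,Y_1)$ in terms of $K(a,\,s;X_0,X_1)$ and then integrate. Fix $a\in(X_0,X_1)_{\theta,p;\lambda}$ and a parameter $t>0$. First I would pick a near-optimal decomposition $a=a_0+a_1$ for $K(a,s;X_0,X_1)$, where $s=s(t)$ is a scale to be chosen (the natural guess, dictated by the exponents $\theta\alpha/\beta$ versus $\theta$ and by homogeneity, is $s=t^{1/\gamma}$ for a suitable power $\gamma$ built from $\alpha$ and $\beta$, or more precisely a scaling that balances the two terms below). Then I would use the trivial splitting $\mathcal{T}a=(\mathcal{T}a-\mathcal{T}a_1)+\mathcal{T}a_1$ and bound the two pieces: by hypothesis \eqref{1mapT}, $\|\mathcal{T}a-\mathcal{T}a_1\|_{Y_0}\le f(\|a\|_{X_0},\|a_1\|_{X_0})\|a-a_1\|_{X_0}^\alpha = f(\|a\|_{X_0},\|a_1\|_{X_0})\|a_0\|_{X_0}^\alpha$, and by \eqref{2mapT}, $\|\mathcal{T}a_1\|_{Y_1}\le g(\|a_1\|_{X_0})\|a_1\|_{X_1}^\beta$. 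Since one may always choose the decomposition of $a$ so that $\|a_0\|_{X_0}\le 2K(a,s)$ and $\|a_1\|_{X_0}\le 2\|a\|_{X_0}$ (using $X_1\subset X_0$ and $K(a,s)\le\|a\|_{X_0}$ for the relevant range of $s$), the arguments of $f$ and $g$ are controlled by $\|a\|_{X_0}$, and $f(\|a\|_{X_0},\|a_1\|_{X_0})\le f(\|a\|_{X_0},2\|a\|_{X_0})\le G(\|a\|_{X_0})$, likewise $g(\|a_1\|_{X_0})\le g(2\|a\|_{X_0})\le G(\|a\|_{X_0})$, by monotonicity of $f$, $g$ and the definition \eqref{Gmax} of $G$.

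Combining, for a suitable new parameter $\tau$ one gets
$$
K(\mathcal{T}a,\tau;Y_0,Y_1)\lesssim G(\|a\|_{X_0})\Bigl(K(a,s;X_0,X_1)^\alpha+\tau\,\|a_1\|_{X_1}^\beta\Bigr).
$$
Now the standard Tartar device: one does not know $\|a_1\|_{X_1}$ directly, but from $\|a_1\|_{X_1}\le s^{-1}K(a,s;X_0,X_1)$ (true for the near-optimal decomposition at level $s$) one obtains $\tau\|a_1\|_{X_1}^\beta\lesssim \tau\,s^{-\beta}K(a,s)^\beta$. Choosing $s$ so that the two contributions $K(a,s)^\alpha$ and $\tau s^{-\beta}K(a,s)^\beta$ are comparable — which, since $K(a,\cdot)$ is increasing and quasi-concave, can be done by a dyadic selection of $s$ as a function of $\tau$ — yields $K(\mathcal{T}a,\tau)\lesssim G(\|a\|_{X_0})\,K(a,s)^\alpha$ with $s$ of the form $\tau^{1/(\beta/\alpha\cdot(\text{something}))}$ up to a factor of $(1+\|a\|_{X_0}^{\beta-\alpha})$ coming from replacing $K(a,s)^{\beta-\alpha}$ by its crude bound $\|a\|_{X_0}^{\beta-\alpha}$ when $s$ is large. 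Bookkeeping the exact exponent is where the shift $\theta\mapsto\theta\alpha/\beta$, $p\mapsto p/\alpha$, $\lambda\mapsto\lambda\alpha$ is forced.

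The final step is to insert this pointwise-in-$\tau$ bound into the definition of the norm on $(Y_0,Y_1)_{\theta\alpha/\beta,\,p/\alpha;\,\lambda\alpha}$, i.e. compute $\|\tau^{-\theta\alpha/\beta-\alpha/p}(1-\log\tau)^{\lambda\alpha}K(\mathcal{T}a,\tau)\|_{L^{p/\alpha}(0,1)}$, change variables from $\tau$ to $s$ (a power substitution, so $d\tau/\tau$ becomes a constant multiple of $ds/s$ and the logarithmic weight transforms by $(1-\log\tau)\simeq(1-\log s)$ in the relevant regime, exactly as in the proof of the Lorentz–Zygmund interpolation theorem above), and recognize the resulting integral as $\bigl(\|\tau^{-\theta-1/p}(1-\log\tau)^\lambda K(a,\tau)\|_{L^p(0,1)}\bigr)^\alpha=\|a\|_{\theta,p;\lambda}^\alpha$, all multiplied by $\bigl[(1+\|a\|_{X_0}^{\beta-\alpha})G(\|a\|_{X_0})\bigr]$. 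The main obstacle is the scale-balancing step: one must choose $s=s(\tau)$ so that the two terms in the $K$-functional estimate match while keeping track of the logarithmic weight through the power change of variables, and must handle separately the regime where $K(a,s)$ is already comparable to $\|a\|_{X_0}$ (large $s$) — this is precisely the source of the $(1+\|a\|_{X_0}^{\beta-\alpha})$ factor and of the constraint $\alpha\le\beta$, which guarantees the exponent $\beta-\alpha\ge 0$ so that the crude bound goes the right way. (When $\alpha=\beta$ the statement collapses to Theorem~\ref{AFFGHR_Th2-2} up to the harmless $(1+\|a\|_{X_0}^{0})=2$ factor, which is a useful consistency check.)
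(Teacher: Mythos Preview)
Your approach is essentially the paper's: the paper does not spell out a proof of this theorem but isolates the key $K$-functional estimate separately (the theorem opening Section~\ref{SecKfunctional}), namely $K(\mathcal{T}a,s^\beta;Y_0,Y_1)\le G(\|a\|_{X_0})\bigl(K(a,s)^\alpha+K(a,s)^\beta\bigr)$, obtained exactly via your splitting $\mathcal{T}a=(\mathcal{T}a-\mathcal{T}a_1)+\mathcal{T}a_1$, after which $K(a,s)^\beta\le \|a\|_{X_0}^{\beta-\alpha}K(a,s)^\alpha$ since $K(a,s)\le\|a\|_{X_0}$ and $\beta\ge\alpha$. One remark on execution: the scale choice is simpler than your ``balancing/dyadic selection'' suggests --- just take $\tau=s^\beta$ from the outset (so your unspecified exponent is exactly $1/\beta$), and then the power change of variables in the $(Y_0,Y_1)_{\theta\alpha/\beta,\,p/\alpha;\,\lambda\alpha}$ norm converts it directly into $\|a\|_{\theta,p;\lambda}^\alpha$, with no need to let $s$ depend on $a$ through $K(a,\cdot)$.
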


\vspace{2mm}

\section{Estimates of the \texorpdfstring{$K$}{R\texttwosuperior}
-functional related to the mapping \texorpdfstring{$\mathcal T$}{R\texttwosuperior}}\label{SecKfunctional}

Here we study the action of the nonlinear mappings $\mathcal T$ on the $K$-funtional, defined in Section \ref{Section_interpolation_spaces} (see \cite[Section 2.1]{AFFGER2023}.

\begin{theorem}
Let $0<\alpha\leq 1$, \ $\beta>0$. Let $X_1\subset X_0$, $Y_1
\subset Y_0$, be four normed spaces and
$${\mathcal T}: X_i\to Y_i, \ \ i=0,1, $$
a nonlinear mapping satisfying the assumptions \eqref{1mapT} and \eqref{2mapT} in Section \ref{Section_Holderian_mappings} and the function $G$ defined in \eqref{Gmax}. Then, $\forall\,a\in X_0$, \ $\forall\,t>0$, we have
$$K\big({\mathcal T}a,t^\beta,Y_0,Y_1\big)=K({\mathcal T}a,t^\beta)\leq G(||a||_{X_0})\left([K(a,t)]^\beta+[K(a,t)]^\alpha\right).$$
Moreover, if $\beta\geq \alpha $, \ then
$$K({\mathcal T}a,t^\beta)\leq G(||a||_{X_0})(1+||a||_{X_0}^{\beta-\alpha})[K(a,t)]^\alpha. $$

\end{theorem}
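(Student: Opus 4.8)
The plan is to estimate $K(\mathcal{T}a,t^\beta;Y_0,Y_1)$ directly from its definition by choosing a convenient decomposition of $\mathcal{T}a$ induced by a near-optimal decomposition of $a$ in the couple $(X_0,X_1)$. Fix $a\in X_0$ and $t>0$. For $\varepsilon>0$ pick a decomposition $a=a_0+a_1$ with $a_0\in X_0$, $a_1\in X_1$ such that
$$
\|a_0\|_{X_0}+t\|a_1\|_{X_1}\leq K(a,t)+\varepsilon.
$$
The natural candidate decomposition of $\mathcal{T}a$ is
$$
\mathcal{T}a=\big(\mathcal{T}a-\mathcal{T}a_1\big)+\mathcal{T}a_1,
$$
so that the first piece lives in $Y_0$ (via assumption \eqref{1mapT}) and the second lives in $Y_1$ (via assumption \eqref{2mapT}). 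Then
$$
K(\mathcal{T}a,t^\beta;Y_0,Y_1)\leq \|\mathcal{T}a-\mathcal{T}a_1\|_{Y_0}+t^\beta\|\mathcal{T}a_1\|_{Y_1}.
$$

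For the first term, \eqref{1mapT} gives $\|\mathcal{T}a-\mathcal{T}a_1\|_{Y_0}\leq f(\|a\|_{X_0},\|a_1\|_{X_0})\,\|a-a_1\|_{X_0}^\alpha=f(\|a\|_{X_0},\|a_1\|_{X_0})\,\|a_0\|_{X_0}^\alpha$. Since $\|a_1\|_{X_0}\leq\|a\|_{X_0}+\|a_0\|_{X_0}$, and after passing to the limit $\varepsilon\to 0$ we have $\|a_0\|_{X_0}\leq K(a,t)\leq K(a,0^+)=\cdots$ is not quite right — instead one controls $\|a_0\|_{X_0}\leq K(a,t)+\varepsilon$ and, for the purpose of bounding the argument of $f$, uses the crude bound $\|a_0\|_{X_0}\leq\|a\|_{X_0}$ (take the trivial decomposition if $K(a,t)$ is large, or simply note $K(a,t)\leq\|a\|_{X_0}$ by choosing $a_0=a$, $a_1=0$, so the near-optimal $a_0$ also satisfies $\|a_0\|_{X_0}\lesssim\|a\|_{X_0}$ up to the additive $\varepsilon$). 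Hence $\|a_1\|_{X_0}\leq 2\|a\|_{X_0}$, and by monotonicity of $f$ in each variable, $f(\|a\|_{X_0},\|a_1\|_{X_0})\leq f(\|a\|_{X_0},2\|a\|_{X_0})\leq G(\|a\|_{X_0})$. This yields $\|\mathcal{T}a-\mathcal{T}a_1\|_{Y_0}\leq G(\|a\|_{X_0})\,\|a_0\|_{X_0}^\alpha$. For the second term, \eqref{2mapT} gives $t^\beta\|\mathcal{T}a_1\|_{Y_1}\leq g(\|a_1\|_{X_0})\,(t\|a_1\|_{X_1})^\beta\leq g(2\|a\|_{X_0})\,(t\|a_1\|_{X_1})^\beta\leq G(\|a\|_{X_0})\,(t\|a_1\|_{X_1})^\beta$, again using monotonicity of $g$. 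Adding the two estimates, using $\|a_0\|_{X_0}\leq K(a,t)+\varepsilon$ and $t\|a_1\|_{X_1}\leq K(a,t)+\varepsilon$, and letting $\varepsilon\to 0$, we obtain
$$
K(\mathcal{T}a,t^\beta;Y_0,Y_1)\leq G(\|a\|_{X_0})\big([K(a,t)]^\alpha+[K(a,t)]^\beta\big),
$$
which is the first assertion.

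For the second assertion, assume $\beta\geq\alpha$. We split according to whether $K(a,t)\leq 1$ or $K(a,t)>1$: in the first case $[K(a,t)]^\beta\leq[K(a,t)]^\alpha$, while in the second case $[K(a,t)]^\beta=[K(a,t)]^{\beta-\alpha}[K(a,t)]^\alpha\leq\|a\|_{X_0}^{\beta-\alpha}[K(a,t)]^\alpha$, using $K(a,t)\leq\|a\|_{X_0}$. In either case $[K(a,t)]^\alpha+[K(a,t)]^\beta\leq(1+\|a\|_{X_0}^{\beta-\alpha})[K(a,t)]^\alpha$, and substituting into the bound above gives
$$
K(\mathcal{T}a,t^\beta;Y_0,Y_1)\leq G(\|a\|_{X_0})\,(1+\|a\|_{X_0}^{\beta-\alpha})\,[K(a,t)]^\alpha,
$$
as claimed. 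The only subtlety — and the step that needs the most care — is the bookkeeping on the arguments of $f$ and $g$: one must make sure the near-optimal decomposition can be taken with $\|a_0\|_{X_0}$ and hence $\|a_1\|_{X_0}$ controlled by a fixed multiple (here $2$) of $\|a\|_{X_0}$, which is exactly what the bound $K(a,t)\leq\|a\|_{X_0}$ together with the definition of $G$ in \eqref{Gmax} is engineered to provide; everything else is the standard $K$-functional decomposition argument.
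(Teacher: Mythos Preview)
Your argument is correct and follows the standard Tartar-type strategy: pick a near-optimal decomposition $a=a_0+a_1$ in $(X_0,X_1)$, split $\mathcal{T}a=(\mathcal{T}a-\mathcal{T}a_1)+\mathcal{T}a_1$, and apply \eqref{1mapT} and \eqref{2mapT} respectively. The paper itself does not include a proof of this theorem (it is deferred to \cite[Section~2.1]{AFFGER2023}), but your approach is precisely the expected one.

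Two minor remarks on presentation rather than substance. First, the control of the arguments of $f$ and $g$ can be written more cleanly: from $\|a_0\|_{X_0}\leq K(a,t)+\varepsilon\leq \|a\|_{X_0}+\varepsilon$ you get $\|a_1\|_{X_0}\leq 2\|a\|_{X_0}+\varepsilon$, and then the \emph{continuity} of $f$ and $g$ (not just monotonicity) is what lets you pass to the limit $\varepsilon\to 0$ and land exactly on $f(\|a\|_{X_0},2\|a\|_{X_0})$ and $g(2\|a\|_{X_0})$, which are bounded by $G(\|a\|_{X_0})$ by definition. Second, in the $\beta\geq\alpha$ step your case split on $K(a,t)\lessgtr 1$ is unnecessary: since $K(a,t)\leq\|a\|_{X_0}$ always, one has directly $[K(a,t)]^\beta=[K(a,t)]^{\beta-\alpha}[K(a,t)]^\alpha\leq\|a\|_{X_0}^{\beta-\alpha}[K(a,t)]^\alpha$, so $[K(a,t)]^\alpha+[K(a,t)]^\beta\leq(1+\|a\|_{X_0}^{\beta-\alpha})[K(a,t)]^\alpha$ in one line.
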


As a particular case we have the following consequences.

\begin{corollary}

Let $0<\alpha\leq 1$, \ $\beta>0$. Let $X_1\subset X_0$, $Y_1
\subset Y_0$, be four normed spaces. Assume that the mapping ${\mathcal T}: X_0\to Y_0 $  is globally $\alpha$-H\"olderian with constant $M_0$, i.e.
$$||{\mathcal T}a-{\mathcal T}b||_{Y_0}\leq M_0 ||a-b||^{\alpha}_{X_0}, \ \ \ \forall\, a,b\in X_0,$$
and ${\mathcal T}$ maps $X_1$ into $Y_1$, in the sense that $\exists\,M_1>0, \ \beta>0$ \ s.t. $$||{\mathcal T}a||_{Y_1} \leq M_1 ||a||_{X_1}^\beta.$$
Then
$$K({\mathcal T}a,t^\beta)\leq\max(M_0;M_1)\Big([K(a,t)]^\beta+[K(a,t)]^\alpha\Big), \ \ \forall\,a\in X_0,\ \forall\,t>0.$$
Moreover, if $\beta\geq \alpha $, then
$$K({\mathcal T}a,t^\beta)\leq \max(M_0;M_1)\,(1+||a||_{X_0}^{\beta-\alpha})\,[K(a,t)]^\alpha.$$

\end{corollary}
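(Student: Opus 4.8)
The plan is to deduce the Corollary directly from the Theorem that immediately precedes it, by checking that the stated hypotheses of the Corollary are a special instance of the hypotheses \eqref{1mapT} and \eqref{2mapT} appearing in the Theorem, and then reading off the conclusion. The key observation is that when the functions $f$ and $g$ in \eqref{1mapT} and \eqref{2mapT} are taken to be the constants $f\equiv M_0$ and $g\equiv M_1$, the condition \eqref{1mapT} reduces exactly to the global $\alpha$-H\"olderian estimate $\|{\mathcal T}a-{\mathcal T}b\|_{Y_0}\leq M_0\|a-b\|_{X_0}^\alpha$, and \eqref{2mapT} reduces exactly to $\|{\mathcal T}a\|_{Y_1}\leq M_1\|a\|_{X_1}^\beta$. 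Constant functions are trivially continuous and (weakly) increasing, so they satisfy the structural requirements imposed on $f$ and $g$.

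Next I would compute the relevant value of the auxiliary function $G$ from \eqref{Gmax}. Since $g(2\sigma)=M_1$ and $f(\sigma,2\sigma)=M_0$ for every $\sigma\in\mathbb R_+$, we get $G(\sigma)=\max\{M_1;M_0\}=\max(M_0;M_1)$, a constant independent of $\sigma$. Substituting this into the first inequality of the Theorem, namely
\[
K({\mathcal T}a,t^\beta)\leq G(\|a\|_{X_0})\big([K(a,t)]^\beta+[K(a,t)]^\alpha\big),
\]
yields immediately
\[
K({\mathcal T}a,t^\beta)\leq\max(M_0;M_1)\big([K(a,t)]^\beta+[K(a,t)]^\alpha\big)
\]
for all $a\in X_0$ and all $t>0$, which is the first claim of the Corollary. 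For the second claim, I would invoke the second part of the Theorem (valid under the extra assumption $\beta\geq\alpha$), which gives $K({\mathcal T}a,t^\beta)\leq G(\|a\|_{X_0})(1+\|a\|_{X_0}^{\beta-\alpha})[K(a,t)]^\alpha$; replacing $G(\|a\|_{X_0})$ by $\max(M_0;M_1)$ produces exactly the stated estimate.

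There is essentially no obstacle here: the Corollary is a corollary in the strict sense, obtained by specializing the dominating functions in the Theorem to constants. The only point that deserves an explicit sentence is the verification that constant functions meet the regularity hypotheses on $f$ and $g$ (continuity and monotonicity), so that the Theorem genuinely applies; once that is noted, the two displayed inequalities follow by direct substitution of $G\equiv\max(M_0;M_1)$ into the two conclusions of the Theorem. If one wished to be fully self-contained one could instead redo the short splitting argument underlying the Theorem's proof, but that would merely duplicate work already done, so I would not pursue it.
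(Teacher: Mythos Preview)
Your proposal is correct and matches the paper's intent: the Corollary is presented there as an immediate specialization of the preceding Theorem with no separate proof, and your argument---taking $f\equiv M_0$, $g\equiv M_1$ as constant (hence continuous and nondecreasing) so that $G(\sigma)=\max(M_0;M_1)$, and substituting into both conclusions of the Theorem---is exactly the derivation the paper leaves implicit.
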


\vspace{2mm}

\begin{corollary}
Let $0<\alpha\leq 1$. Let $X_1\subset X_0$, $Y_1
\subset Y_0$, be four normed spaces. Assume that the mapping
$${\mathcal T}: X_i\to Y_i, \ \ \ i=0,1 , $$
is globally $\alpha$-H\"olderian with constant $M_i$, for $i=0,1 $, \  i.e.
$$||{\mathcal T}a-{\mathcal T}b||_{Y_i}\leq M_i ||a-b||^{\alpha}_{X_i}, \ \ \forall\, a,b\in X_i.$$
Then, $\forall\,a\in X_0,\ \forall\,b\in X_1, \ \forall\, t>0$, we have
$$K({\mathcal T}a-{\mathcal T}b,t^\alpha)\leq 2 \max(M_0;M_1)\,[K(a-b,t)]^\alpha.$$
Furthermore, if $X_1$ is dense in $X_0$, then the above inequality holds also for all $b\in X_0$.

\end{corollary}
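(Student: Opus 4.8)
The plan is to reduce the $K$-functional estimate for $\mathcal{T}a - \mathcal{T}b$ to the preceding Theorem by exploiting that the two-sided H\"olderian hypothesis on $\mathcal{T}: X_i \to Y_i$ is precisely the hypothesis needed to run the same argument with the increment $a-b$ in place of $a$. Concretely, fix $a, b \in X_1$ and $t > 0$; choose an almost-optimal decomposition $a - b = c_0 + c_1$ with $c_0 \in X_0$, $c_1 \in X_1$ realizing $\|c_0\|_{X_0} + t\|c_1\|_{X_1} \leq 2K(a-b,t)$. The natural candidate decomposition of $\mathcal{T}a - \mathcal{T}b$ is obtained by writing $\mathcal{T}a - \mathcal{T}b = (\mathcal{T}a - \mathcal{T}(b + c_1)) + (\mathcal{T}(b+c_1) - \mathcal{T}b)$; since $a - (b+c_1) = c_0 \in X_0$ and $(b+c_1) - b = c_1 \in X_1$, the first term is controlled in $Y_0$ by $M_0\|c_0\|_{X_0}^\alpha$ and the second in $Y_1$ by $M_1\|c_1\|_{X_1}^\alpha$, using the two H\"olderian bounds respectively. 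This gives $K(\mathcal{T}a - \mathcal{T}b, t^\alpha) \leq M_0\|c_0\|_{X_0}^\alpha + t^\alpha M_1\|c_1\|_{X_1}^\alpha$.

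The next step is the elementary inequality $x^\alpha + y^\alpha \leq 2\,(x+y)^\alpha$ for $x,y \geq 0$ and $0 < \alpha \leq 1$ (or more simply $\leq 2^{1-\alpha}$ times that, but the constant $2$ suffices), applied to $x = M_0^{1/\alpha}\|c_0\|_{X_0}$ and... actually more directly: bound each $M_i$ by $\max(M_0; M_1)$, factor it out, and then observe
$$
\|c_0\|_{X_0}^\alpha + t^\alpha\|c_1\|_{X_1}^\alpha = \|c_0\|_{X_0}^\alpha + (t\|c_1\|_{X_1})^\alpha \leq 2\big(\|c_0\|_{X_0} + t\|c_1\|_{X_1}\big)^\alpha \leq 2\big(2K(a-b,t)\big)^\alpha.
$$
Taking the infimum over decompositions (the factor $2$ from near-optimality can be absorbed, or one passes to a limit) yields $K(\mathcal{T}a - \mathcal{T}b, t^\alpha) \leq 2\max(M_0;M_1)\,[K(a-b,t)]^\alpha$, which is the claimed bound; I would present the clean form by taking the infimum directly rather than through an $\varepsilon$-argument, since the $K$-functional infimum is attained in the limit.

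For the final sentence, the extension from $b \in X_1$ to $b \in X_0$ under the density hypothesis is a routine continuity/approximation argument: given $b \in X_0$, pick $b_n \in X_1$ with $b_n \to b$ in $X_0$; then $\mathcal{T}b_n \to \mathcal{T}b$ in $Y_0$ by the $Y_0$-H\"olderian estimate, $K(a - b_n, t) \to K(a-b, t)$ since $K(\cdot, t)$ is $1$-Lipschitz on $X_0 + X_1$ with respect to the $X_0$-norm, and $K(\cdot, t^\alpha)$ is likewise continuous on $Y_0 + Y_1$; passing to the limit in the inequality for $b_n$ preserves it. The main obstacle, such as it is, is simply organizing the decomposition cleanly — in particular noticing that one should split the increment through the intermediate point $b + c_1$ so that the two differences land in $X_0$ and $X_1$ separately, thereby matching hypothesis \eqref{1mapT}/\eqref{2mapT} in the two-H\"olderian (rather than growth-function) form; the rest is the same one-line convexity estimate $x^\alpha + y^\alpha \leq 2(x+y)^\alpha$ that already underlies Theorem~\ref{AFFGHR_Th2-2} and the previous Theorem.
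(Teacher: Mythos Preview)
Your proof is correct and is exactly the argument the paper intends: the Corollary is stated in the paper without proof, as an immediate consequence of the preceding Theorem, and your decomposition through the intermediate point $b+c_1$ is precisely the unfolding of that Theorem applied to the shifted map $c\mapsto \mathcal{T}(b+c)-\mathcal{T}b$ (which satisfies (1)--(2) with constant functions $f\equiv M_0$, $g\equiv M_1$, $\beta=\alpha$). One small slip: you open with ``fix $a,b\in X_1$'', but your argument in fact only uses $a\in X_0$ and $b\in X_1$---the first bracket needs merely $a,\,b+c_1\in X_0$, and the second needs $b,\,b+c_1\in X_1$---so it already covers the stated range without any extra work; the density step is then only needed to pass from $b\in X_1$ to $b\in X_0$, exactly as you wrote.
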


\vspace{2mm}

\section{Applications to the regularity of the solution of a \texorpdfstring{$p$}{R\texttwosuperior}-Laplacian equation}\label{SecPDEs}

The Marcinkiewicz interpolation theorems for linear operators acting on Lebesgue spaces turned out to be a powerful tool for studying regularity of solutions for linear PDEs in $L^p$-spaces.
The $K$-method introduced by J. Peetre (\cite{Peetre1968, Peetre1970_StudiaMath}) allowed to extend the study of  regularity of solutions of linear equations on spaces different from  $L^p$-spaces. The main difficulty to apply Peetre's definition is the identification of the interpolation spaces between two normed spaces embedded in a same topological space. In \cite{AFFGR2020,FFGKR_NA2018,FFR-DIE2017,Fiorenza-Formica-Gogatishvili-DEA2018} we did such a study with applications to linear PDEs using non-standard spaces as grand or small Lebesgue spaces and $G\Gamma$-spaces. \par
In \cite{Tartar1972} L. Tartar gave interpolation results on nonlinear H\"olderian mappings (which include Lipschitz mappings) with applications to semi-linear PDEs.

Other results concerning interpolation of Lipschitz operators and other applications of
Interpolation theory, also in PDEs, can be found, e.g., in \cite{Cianchi-Mazja_European,Maligranda1984Studia,Maligranda1989,Maligranda1990bibliography,Maligranda_Persson_Wyller,Peetre1970}.

\vspace{2mm}

\subsection{The quasilinear \texorpdfstring{$p$}{R\texttwosuperior}-Laplacian equation}\label{subsecEquation}\hfill\\[1mm]
In the sequel we will consider $\Omega$ a bounded smooth domain of $\mathbb R^n$, \ $n\geq 2$, \ $1<p<+\infty$, \  $ \dfrac 1p+\dfrac 1{p'}=1$. Recall that $W^{1,p}(\Omega)$ is the classical Sobolev space of all real-valued functions $f\in L^p(\Omega)$ whose first-order weak (or distributional) partial derivatives on $\Omega$ belong to $L^p(\Omega)$, normed by
$$
||f||_{W^{1,p}(\Omega)}=||f||_{L^p(\Omega)}+||\nabla f||_{L^p(\Omega)}.
$$
The space $W_0^{1,p}(\Omega)$ denotes the closure of $C_0^\infty(\Omega)$ in $W^{1,p}(\Omega)$; rougly speaking $W_0^{1,p}(\Omega)$ is a subspace of $W^{1,p}(\Omega)$ consisting of functions which vanish on the boundary of $\Omega$. Its dual is $W^{-1,p'}(\Omega)=(W_0^{1,p}(\Omega))'$.

\vspace{2mm}

We provide applications of the results described in Sections \ref{Section_interpolation_spaces}, \ref{Section_Holderian_mappings} and \ref{SecKfunctional} to regularity on the gradient of the weak or entropic-renormalized solution $u$ to the quasilinear equation of the form
\begin{equation}\label{divEq}
 -\div(|\nabla u|^{p-2}\nabla u)+V(x;u)=f,  \ \ \ \ u=0 \ {\rm on} \ \partial \Omega,
 \end{equation}
associated to the Dirichlet homogeneous condition on the boundary, where $1< p<\infty$, $V$ is a nonlinear potential and  $f$ belongs to non-standard spaces such as Lorentz-Zygmund spaces.


\vspace{2mm}
More precisely, we assume that
$V:\Omega\times \mathbb R \to\mathbb R$ is a Carath{é}odory function satisfying:

\begin{enumerate}
\renewcommand{\labelenumi}{(\theenumi)}
\item[(H1)\label{nH1}] the mapping $x\in\Omega\to V(x;\sigma)$ is in $L^\infty(\Omega)$  for every $\sigma\in\mathbb R$;
\item[(H2)\label{nH2}] the mapping $\sigma\in\mathbb R\to V(x;\sigma)$ is continuous and
non decreasing for almost every $x\in \Omega$ and $V(x;0)=0$.
\end{enumerate}

\vspace{2mm}

The $p$-laplacian is \emph{strong coercive}, that is satisfies the following inequality, which can be found in \cite[Lemma 4.10, p.264]{Diaz_book_1985}.

For $p\geq 2$, there exists a constant $\alpha_p>0$ such that, for all $\xi,\xi'\in \mathbb R^n$,
\begin{equation}\label{ineq_coercivity}
(|\xi|^{p-2}\xi -|\xi'|^{p-2}\xi',\xi-\xi')_{\mathbb R^n}\geq \alpha_p |\xi-\xi'|^p,
\end{equation}
where the symbol $(\cdot, \cdot)_{\mathbb R^n}$ in the left-hand side denotes the inner product in
$\mathbb R^n$ and $|\cdot|$ is the associated norm.

\vspace{2mm}
The next Proposition follows from Leray–Lions'method for monotone operators (see \cite{Lions1969book}) or the usual fixed point theorem of Leray–Schauder's type (see \cite{Gilbarg_Trudinger}).

\begin{proposition}{\rm (\textbf{weak solution})} \label{defweaksol}\\
Let $f\in L^1(\Omega)\cap W^{-1,p'}(\Omega)$, \ $1<p<+\infty$, \  $ \dfrac 1p+\dfrac 1{p'}=1$.

Then there exists a unique \textbf{weak solution} of the Dirichlet problem \eqref{divEq},
i.e. a unique  $u\in W^{1,p}_0(\Omega)$ such that
\begin{equation}\label{weak sol}
 \begin{gathered}
 \int_\Omega\varphi(x)V(x;u)dx+\int_\Omega|\nabla u|^{p-2}\nabla u\cdot \nabla \varphi dx=\int_\Omega f\varphi\,dx ,\\
 \ \ \ \forall\,\varphi\in W^{1,p}_0(\Omega)\cap L^\infty(\Omega)
\end{gathered}
\end{equation}

\end{proposition}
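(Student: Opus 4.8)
**The plan is to prove existence and uniqueness separately, relying on the monotonicity structure of the $p$-Laplacian plus the monotonicity of $V$ in the second variable.**

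\medskip

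\textbf{Setup.} First I would put $V=W_0^{1,p}(\Omega)$, which is a reflexive, separable Banach space (since $1<p<\infty$), and define the operator $A:V\to V'$ by
\[
\langle Au,\varphi\rangle=\int_\Omega |\nabla u|^{p-2}\nabla u\cdot\nabla\varphi\,dx+\int_\Omega V(x;u)\varphi\,dx,\qquad \varphi\in V.
\]
The weak formulation \eqref{weak sol} then reads $Au=F$ in $V'$, where $F\in V'$ is the functional $\varphi\mapsto\int_\Omega f\varphi\,dx$; this is well defined because $f\in W^{-1,p'}(\Omega)$ by hypothesis, and the restriction to test functions in $W_0^{1,p}\cap L^\infty$ extends to all of $W_0^{1,p}$ by density once one checks the $V$-term is controlled — here the assumption $f\in L^1(\Omega)$ is what lets one make sense of $\int_\Omega V(x;u)\varphi\,dx$ and of $\int_\Omega f\varphi\,dx$ for the a priori merely $W_0^{1,p}$ solution. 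One must first check that $A$ is well defined: by (H1)--(H2), $|V(x;u(x))|\le |V(x;u(x))-V(x;0)|$ is dominated using monotonicity and the $L^\infty$-in-$x$ bound, so the Nemytskii operator $u\mapsto V(\cdot;u)$ sends $V$ into a suitable dual space; the growth is handled by Sobolev embedding in the subcritical range and by the sign condition in general (the $V$-term only helps coercivity and monotonicity, so even a crude bound suffices since we never need it to be small).

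\medskip

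\textbf{The three properties of $A$.} The heart of the argument is to verify the hypotheses of the Browder--Minty theorem (equivalently, Leray--Lions for this simple principal part). (i) \emph{Monotonicity}: for $u,v\in V$,
\[
\langle Au-Av,u-v\rangle=\int_\Omega\big(|\nabla u|^{p-2}\nabla u-|\nabla v|^{p-2}\nabla v\big)\cdot(\nabla u-\nabla v)\,dx+\int_\Omega\big(V(x;u)-V(x;v)\big)(u-v)\,dx.
\]
The first integral is $\ge 0$ by the elementary inequality for the vector field $\xi\mapsto|\xi|^{p-2}\xi$ (strictly positive when $\nabla u\ne\nabla v$ on a set of positive measure; for $p\ge 2$ one may invoke \eqref{ineq_coercivity}, for $1<p<2$ the analogous strict monotonicity inequality holds), and the second integral is $\ge 0$ because $\sigma\mapsto V(x;\sigma)$ is nondecreasing for a.e.\ $x$. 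Hence $A$ is strictly monotone. (ii) \emph{Coercivity}: taking $v=0$ and using $V(x;0)=0$ together with the sign of $V(x;u)u$ (which is $\ge 0$ since $V$ is nondecreasing and vanishes at $0$), one gets $\langle Au,u\rangle\ge \int_\Omega|\nabla u|^p\,dx=\|u\|_V^p$, so $\langle Au,u\rangle/\|u\|_V\to\infty$. (iii) \emph{Hemicontinuity/demicontinuity}: the map $t\mapsto\langle A(u+tw),v\rangle$ is continuous on $\mathbb R$ — for the principal part this is dominated convergence using continuity of $\xi\mapsto|\xi|^{p-2}\xi$, and for the potential term it is the continuity of $\sigma\mapsto V(x;\sigma)$ (hypothesis (H2)) combined with dominated convergence.

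\medskip

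\textbf{Conclusion.} Browder--Minty (or Leray--Lions) then yields a solution $u\in V$ of $Au=F$, and strict monotonicity gives uniqueness: if $Au=Av=F$ then $0=\langle Au-Av,u-v\rangle\ge\alpha_p\|\nabla u-\nabla v\|_{L^p}^p$ (or the $p<2$ analogue), forcing $\nabla u=\nabla v$ a.e., hence $u=v$ by the Poincar\'e inequality in $W_0^{1,p}$. Alternatively, as the statement suggests, one can obtain existence by a Leray--Schauder fixed-point argument: freeze $u$ in the lower-order term, solve the resulting problem $-\Delta_p w = f - V(\cdot;u)$ (which has a unique solution by the monotone theory for the pure $p$-Laplacian), and show the solution map $u\mapsto w$ has a fixed point using compactness of the embedding $W_0^{1,p}\hookrightarrow\hookrightarrow L^p$ and a uniform a priori bound from the coercivity estimate above. \textbf{I expect the main technical obstacle to be the careful treatment of the lower-order term $\int_\Omega V(x;u)\varphi\,dx$} — making sure it is finite and that the density passage from $\varphi\in W_0^{1,p}\cap L^\infty$ to general test functions is legitimate given only $f\in L^1\cap W^{-1,p'}$; everything else is the standard monotone-operator machinery, and indeed the Proposition explicitly attributes it to Leray--Lions / Leray--Schauder.
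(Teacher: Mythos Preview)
Your proposal is correct and aligns with the paper's own treatment: the paper does not give a detailed proof of this Proposition but simply states that it ``follows from Leray--Lions' method for monotone operators \ldots\ or the usual fixed point theorem of Leray--Schauder's type,'' which is exactly the machinery you invoke. Your identification of the lower-order term $\int_\Omega V(x;u)\varphi\,dx$ as the one genuinely delicate point is also apt, and it explains why the paper restricts test functions to $W^{1,p}_0\cap L^\infty$ rather than all of $W^{1,p}_0$.
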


\begin{remark}
We observe that if $p>n$ then $L^1(\Omega)\subset W^{-1,p'}(\Omega)$. In this case it is well known that, if the datum $f$ of \eqref{divEq} is in $L^1(\Omega)$, there is existence and uniqueness of the weak solution, which is bounded (thanks to the Sobolev embedding). The existence follows from the classical results on operators acting between Sobolev spaces in duality (see, e.g., \cite[p.107]{Lions1969book}, \cite{BoccardoGalluet,Dallaglio,LerayLions}).

\noindent If $1<p<n$ then $L^{(p^*)'}(\Omega)\subset W^{-1,p'}(\Omega)$, where $p^*=\frac{np}{n-p}$, and also $L^{(p^*)'}(\Omega)\subset L^1(\Omega)$. Therefore, if the datum $f$ of \eqref{divEq} is in $L^{(p^*)'}(\Omega)$, we have $f\in L^1(\Omega)\cap W^{-1,p'}(\Omega)$, hence Proposition \ref{defweaksol} can be applied.

\noindent If $p=n$ then $L^{q'}(\Omega)\subset W^{-1,p'}(\Omega)$ for any $q\in[1,\infty[$, where $q'=\frac{q}{q-1}$. Therefore, if $f\in L^{q'}(\Omega)$, for the same above reasons, we can apply Proposition \ref{defweaksol}.

\end{remark}

\begin{remark}
In the case $p=n$ and $f$ in $L^1(\Omega)$ the Iwaniec-Sbordone's method guarantees existence and uniqueness of the weak solution of \eqref{divEq}; see, e.g., \cite{Greco-Iwaniec-Sbordone}, where the authors use the Hodge decomposition in a smart way in a variety of questions (see also \cite{Ferone_Jalal,Miranville2001}); see \cite{Fiorenza_Sbordone} for the particular case $p=n=2$).
\end{remark}

For the above Remark, the meaningful case is
$$p<n,$$
even if all our next results remain still true for $p\geq n$.

\vspace{2mm}

We define a nonlinear mapping
\begin{equation} \label{defT}
\begin{matrix}
{\mathcal T}:&L^1(\Omega)\cap W^{-1,p'}(\Omega) &\longrightarrow& \big[L^p(\Omega)\big]^n\\[3mm]
&f&\longmapsto&{\mathcal T}f=\nabla u .
\end{matrix}
\end{equation}
We intend to extend the mapping $\mathcal T$ over all $L^1(\Omega)$.

\vspace{2mm}

If $p<n$ and $f$ is only in $L^1(\Omega)$, the formulation by equation \eqref{weak sol} cannot ensure the uniqueness of the solution.

\vspace{2mm}

This case attracted the interest of several researchers, who tried to find a satisfying notion of solution in order to get both existence and uniqueness of the solution (see, for instance, \cite{Benilan1995,Blanchard_Murat,Boccardo_Diaz_Giachetti_Murat,Carrillo_Wittbold,DiPerna_Lions1989,
Rakotoson1993DIE,Rakotoson1996proprietes}).

Here we focus our attention to the so-called entropic-renormalized solutions, considered by Jean Michel Rakotoson in \cite{Rakotoson1994uniqueness,Rakotoson1996proprietes} (see also \cite{Benilan1995}), which are defined as follows.

\vspace{2mm}

\begin{definition} (\textbf{Entropic-renormalized solution})\label{entropic_sol}

Let $\Omega \subset \mathbb R^n$ be a bounded smooth domain.

For all $k>0$, we consider the truncation operator $T_k:\mathbb R\to \mathbb R$ defined by
\begin{equation}\label{truncation}
T_k(\sigma)=\{|k+\sigma|-|k-\sigma|\}/2,
\end{equation}
and we define $\mathbb S^{1,p}_0$ as the set of all measurable functions $v:\Omega\to\mathbb R $ satisfying:

\vspace{2mm}

{\textbf{1.}} \ $ {\rm tan}^{-1}(v)\in W^{1,1}_0(\Omega)$;

\vspace{2mm}

{\textbf{2.}} \ $\forall\,k>0, \ T_k(v)\in W^{1,p}_0(\Omega)$;

\vspace{2mm}

{\textbf{3.}} \ $\displaystyle\sup_{k>0}k^{-\frac1p}||\nabla T_k(v)||_{L^p(\Omega)}<\infty$.

\end{definition}

\vspace{2mm}

A function $u$ defined on $\Omega$ is an entropic-renormalized solution of the Dirichlet problem
\begin{equation}\label{Eq-datumL1}
-\Delta_pu+V(x;u)
=f\in L^1(\Omega),\quad\quad  u=0 \quad {\rm on} \ \partial\Omega
\end{equation}
if

\vspace{2mm}

(1) \  $u\in\mathbb S^{1,p}_0(\Omega)$, $V(\cdot, u)\in L^1(\Omega).$

\vspace{2mm}

(2) \  $\forall\,\eta\in W^{1,r}(\Omega),\ r>n,\ \ \forall\,\varphi\in W^{1,p}_0(\Omega)\cap L^{\infty}(\Omega)$ and all $B\in W^{1,\infty}(\mathbb R)$ with $ B(0)=0$, $B'(\sigma)=0$ for all $\sigma$ such $|\sigma|\geq \sigma_0>0$, one has:
\begin{equation}\label{renormalized sol}
\!\int_\Omega|\nabla u|^{p-2}\nabla u\cdot\nabla\Big(\eta B(u-\varphi)\Big)dx+
\int_\Omega V(x;u)\eta  B(u-\varphi)dx=\!\int_\Omega f\eta B(u-\varphi)dx
\end{equation}

\begin{remark}\label{entropic-weak}
If $f\in L^{p'}(\Omega)$ then the formulation \eqref{renormalized sol} is equivalent to the formulation \eqref{weak sol}, i.e. a weak solution is an entropic-renormalized solution (see \cite{Rakotoson1994uniqueness} or \cite{Miranville2001} for the case $p=n$). In
\cite{Benilan1995,Blanchard_Murat,Carrillo_Wittbold,Rakotoson1993DIE,Rakotoson2000_Napoli} existence and uniqueness of an entropic-renormalized solution have been proved.

\end{remark}

\begin{theorem}\label{thexistenceentropic}

Let $f\in L^1(\Omega)$ and
$V:\Omega\times \mathbb R \to\mathbb R$ a Caratheodory function satisfying the assumptions (H1) and (H2).
 Then there exists a unique entropic-renormalized solution of the equation \eqref{Eq-datumL1}.



 Moreover, let $u_j$ be a sequence of the solutions of \eqref{Eq-datumL1} with respective data $f_j$, $j\in \mathbb N$. If the sequence $(f_j)$ converges to $f$ in $L^1(\Omega)$,  then the sequence $(\nabla u_j(x))$ converges to $\nabla u(x)$ almost everywhere in $\Omega$ (up to subsequences still denoted by $\{u_j\}$).
When $p>2-\dfrac 1n$, the solution $u\in  W^{1,1}_0(\Omega).$

\end{theorem}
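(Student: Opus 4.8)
The plan is to realize the entropic-renormalized solution as a limit of the weak solutions furnished by Proposition \ref{defweaksol} when the datum is truncated, following the scheme of Rakotoson \cite{Rakotoson1994uniqueness,Rakotoson1996proprietes} and B\'enilan et al.\ \cite{Benilan1995}. \emph{Step 1 (approximation and a priori estimates).} For each $\lambda>0$ put $f_\lambda=T_\lambda(f)\in L^\infty(\Omega)\subset L^1(\Omega)\cap W^{-1,p'}(\Omega)$, and let $u_\lambda\in W^{1,p}_0(\Omega)$ be the corresponding unique weak solution in the sense of \eqref{weak sol}. Testing \eqref{weak sol} with $T_k(u_\lambda)$ and using (H2) (monotonicity of $\sigma\mapsto V(x;\sigma)$ and $V(x;0)=0$, so that $V(x;u_\lambda)T_k(u_\lambda)\geq 0$) gives the uniform bound $\|\nabla T_k(u_\lambda)\|_{L^p(\Omega)}^p\leq k\|f\|_{L^1(\Omega)}$; testing with $k^{-1}T_k(u_\lambda)$ and letting $k\to\infty$ gives $\int_\Omega|V(x;u_\lambda)|\,dx\leq\|f\|_{L^1(\Omega)}$. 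A standard dyadic argument of Boccardo--Gallou\"et type then produces, uniformly in $\lambda$, estimates of $u_\lambda$ in the Marcinkiewicz space $L^{\frac{n(p-1)}{n-p},\infty}(\Omega)$ and of $|\nabla u_\lambda|$ in $L^{m,\infty}(\Omega)$ with $m:=\frac{n(p-1)}{n-1}$, while $\arctan u_\lambda$ stays bounded in $W^{1,1}_0(\Omega)$; in particular $\sup_\lambda\sup_{k>0}k^{-1/p}\|\nabla T_k(u_\lambda)\|_{L^p(\Omega)}<\infty$, which is property \textbf{3.} in Definition \ref{entropic_sol}.

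\emph{Step 2 (compactness and a.e.\ convergence of gradients).} The compact embedding $W^{1,1}_0(\Omega)\hookrightarrow L^1(\Omega)$ applied to $\arctan u_\lambda$ yields, along a subsequence, $u_\lambda\to u$ a.e.\ and in measure, with $u\in\mathbb S^{1,p}_0(\Omega)$; a Vitali argument using (H1) and the uniform bound on $\int_\Omega|V(x;u_\lambda)|$ gives $V(\cdot,u_\lambda)\to V(\cdot,u)$ in $L^1(\Omega)$, so $V(\cdot,u)\in L^1(\Omega)$. The delicate point — and the main obstacle — is the almost-everywhere convergence $\nabla u_\lambda\to\nabla u$. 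The plan here is the Boccardo--Murat argument: for each fixed $k$ one inserts (a suitable regularization of) $T_k(u_\lambda)-T_k(u)$, cut off away from the level set $\{|u|=k\}$, as a test function and uses the equi-integrability from Step 1 to show
$$\int_\Omega\big(|\nabla T_k u_\lambda|^{p-2}\nabla T_k u_\lambda-|\nabla T_k u|^{p-2}\nabla T_k u\big)\cdot\nabla\big(T_k u_\lambda-T_k u\big)\,dx\longrightarrow 0 ;$$
the strong coercivity \eqref{ineq_coercivity} when $p\geq 2$ (and its companion inequality, of the form $(|\xi|^{p-2}\xi-|\xi'|^{p-2}\xi',\xi-\xi')\gtrsim|\xi-\xi'|^2(|\xi|+|\xi'|)^{p-2}$, when $1<p<2$) then forces $\nabla T_k u_\lambda\to\nabla T_k u$ in measure, hence $\nabla u_\lambda\to\nabla u$ a.e.\ along a further subsequence.

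\emph{Step 3 (passage to the limit and uniqueness).} With $\nabla u_\lambda\to\nabla u$ a.e.\ and $|\nabla u_\lambda|$ bounded in $L^{m,\infty}(\Omega)$ with $m>1$ (for the relevant range, and in any case on truncations), $|\nabla u_\lambda|^{p-2}\nabla u_\lambda$ is equi-integrable and converges in $L^1(\Omega)$; since any admissible test triple $(\eta,\varphi,B)$ in \eqref{renormalized sol} gives $\eta B(u-\varphi)\in W^{1,p}_0(\Omega)\cap L^\infty(\Omega)$ (recall $\eta\in W^{1,r}$, $r>n$, is continuous and $B$ is bounded), one passes to the limit in each term of \eqref{renormalized sol} for $u_\lambda$ — the zero-order term via the $L^1$ convergence of $V(\cdot,u_\lambda)$, the right-hand side via $f_\lambda\to f$ in $L^1(\Omega)$ against the bounded function $\eta B(u-\varphi)$ — and conclude that $u$ solves \eqref{renormalized sol}. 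For uniqueness, given two solutions $u,\hat u$ one subtracts the two renormalized identities tested with functions built from $B=B_\delta$ approximating a primitive of a smoothed sign of $T_k(u-\hat u)$: the monotonicity of $V$ in its second argument makes the zero-order contribution have the correct sign, and the monotonicity of the $p$-Laplacian (again \eqref{ineq_coercivity}) gives $\nabla u=\nabla\hat u$ a.e., whence $u=\hat u$ since both have vanishing trace in the $\mathbb S^{1,p}_0$ sense. This is Rakotoson's uniqueness argument.

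\emph{Step 4 (stability and the regularity $u\in W^{1,1}_0(\Omega)$).} For the stability statement one simply reruns Steps 1--3 with the sequence $u_j$ in place of $u_\lambda$: since $f_j\to f$ in $L^1(\Omega)$ the data are bounded in $L^1(\Omega)$, so the a priori estimates hold uniformly in $j$, the Boccardo--Murat argument of Step 2 gives $\nabla u_j\to\nabla u$ a.e.\ along a subsequence, and the uniqueness of Step 3 identifies the limit with the solution of \eqref{Eq-datumL1}. Finally, when $p>2-\frac1n$ the Marcinkiewicz exponent $m=\frac{n(p-1)}{n-1}$ satisfies $m>1$, so on the bounded domain $\Omega$ the uniform bound $|\nabla u|\in L^{m,\infty}(\Omega)\subset L^1(\Omega)$, together with $T_k(u)\in W^{1,p}_0(\Omega)\subset W^{1,1}_0(\Omega)$ and $\nabla T_k(u)\to\nabla u$ in $L^1(\Omega)$ as $k\to\infty$, shows that $u\in W^{1,1}_0(\Omega)$.
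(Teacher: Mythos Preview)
The paper does not supply its own proof of Theorem \ref{thexistenceentropic}; the result is stated as known, with existence and uniqueness credited in Remark \ref{entropic-weak} to \cite{Benilan1995,Blanchard_Murat,Carrillo_Wittbold,Rakotoson1993DIE,Rakotoson2000_Napoli}. Your outline is precisely the route taken in those references---approximation by truncated data and Proposition \ref{defweaksol}, Boccardo--Gallou\"et a priori estimates in Marcinkiewicz spaces, Boccardo--Murat almost-everywhere gradient convergence, passage to the limit in the renormalized identity, and uniqueness via the monotonicity of both the $p$-Laplacian and $V$---so there is nothing to contrast: your plan \emph{is} the standard argument the paper is citing.

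One small wording issue in Step~3: to obtain \eqref{renormalized sol} for the limit $u$ you should test the weak equation for $u_\lambda$ with $\eta B(u_\lambda-\varphi)$ rather than with $\eta B(u-\varphi)$; the a.e.\ convergence $u_\lambda\to u$ then gives $B(u_\lambda-\varphi)\to B(u-\varphi)$ and $B'(u_\lambda-\varphi)\to B'(u-\varphi)$ a.e., while the equi-integrability of $|\nabla u_\lambda|^{p-1}$ (from the $L^{n',\infty}$ bound) and the $L^p$ bound on $\nabla T_k(u_\lambda)$ handle the principal term. Also, the Vitali step for $V(\cdot,u_\lambda)\to V(\cdot,u)$ in $L^1$ needs more than the uniform $L^1$ bound: one uses that $\int_{\{|u_\lambda|>k\}}|V(x,u_\lambda)|\,dx$ is controlled by $\int_{\{|u_\lambda|>k\}}|f_\lambda|\,dx$, which goes to zero uniformly in $\lambda$ as $k\to\infty$ because $(f_\lambda)$ is equi-integrable and $|\{|u_\lambda|>k\}|\to 0$ uniformly. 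These are routine refinements and do not affect the soundness of your scheme.
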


\vspace{1mm}

The next Proposition gathers well known results (see \cite{Cianchi-Mazja_Arch,Cianchi-Mazja_European,Ferone_Posteraro_Rakotoson,Rakotoson2008rearrangement,
Rakotoson2021HAL}).

\vspace{1mm}

We shall need the following additional growth assumption on $V$.

\vspace{2mm}

(H3) \  There exist a constant $c>0$ and $m_1\in [p-1, \overline{m}_1[$, where
$$
\overline {m}_1=\begin{cases}(p-1)\left(1+\dfrac1{n-p}\right)&if\ p<n\\
<+\infty&if\ p\geq n,\end{cases}
$$
 such that
$$|V(x,\sigma)|\leq c|\sigma|^{m_1},\qquad\forall\,\,\sigma\in\mathbb R,\ a.e.\ x\in\Omega.
$$

\vspace{2mm}

\begin{proposition}\label{prop_u_bounded}
Let $u$ be the solution of the Dirichlet problem \eqref{Eq-datumL1}, with $f\in L^{p'}(\Omega)$. Let us assume that $V$ satisfies (H1) and (H2).

\begin{itemize}

\item If $f\in L^{\frac{n}{p},\frac{1}{p-1}}(\Omega)$ and $p\leq n$, then $u\in L^\infty(\Omega)$ and
\begin{eqnarray}\label{first}
||u||_{L^\infty(\Omega)}\leq c||f||^{\frac{1}{p-1}}_{L^{\frac{n}{p},\frac{1}{p-1}}(\Omega)}.
\end{eqnarray}

\item If $f\in L^{1,\frac{1}{p-1}}(\Omega)$ and $p> n$, then $u\in L^\infty(\Omega)$ and
\begin{eqnarray}\label{second}
||u||_{L^\infty(\Omega)}\leq c||f||^{\frac{1}{p-1}}_{L^{1,\frac{1}{p-1}}(\Omega)}.
\end{eqnarray}

\item If $V$ satisfies also the growth assumption (H3) and $f \in L^{n,1}(\Omega)$, $n\geq 3$, then
\begin{equation}\label{gradientbounded}
||\nabla u||_{L^\infty(\Omega)}\leq c\Big(1+||f||^{\frac{m_1+1-p}{p-1}}_{L^1}\Big)||f||^{\frac1{p-1}}_{L^{n,1}(\Omega)}.
\end{equation}
All the constants denoted by $c$ depend only on $p,\Omega, V$.

\end{itemize}

\end{proposition}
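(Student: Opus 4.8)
Since Proposition~\ref{prop_u_bounded} collects facts that are by now classical --- the cited works \cite{Cianchi-Mazja_Arch,Cianchi-Mazja_European,Ferone_Posteraro_Rakotoson,Rakotoson2008rearrangement,Rakotoson2021HAL} cover all three items --- the plan is to reduce each assertion to a regularity estimate for the pure $p$-Laplacian, handling the lower-order term $V(\cdot,u)$ either by discarding it (via its sign) or by absorbing it into the datum. For the first two items (boundedness of $u$) I would argue by Schwarz symmetrization of Talenti type. Assumption (H2) says that $\sigma\mapsto V(x;\sigma)$ is nondecreasing with $V(x;0)=0$, so $V(x;u(x))$ has the sign of $u(x)$; hence, testing \eqref{weak sol} with the truncations $G_k(u):=u-T_k(u)$, $k>0$ (after the routine approximation that makes this test function admissible), the zeroth-order contribution is nonnegative and may be dropped, leaving $\int_{\{|u|>k\}}|\nabla u|^p\le\int_\Omega f\,G_k(u)$. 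Coupling this with the coarea formula, the isoperimetric inequality and H\"older's inequality produces the usual differential inequality for the distribution function of $u$, equivalently for $u_*$, whose integration yields a pointwise bound of the form
$$u_*(t)\ \lesssim\ \int_t^{|\Omega|}\Big(s^{\frac pn}\,f_{**}(s)\Big)^{\frac1{p-1}}\,\frac{ds}{s},\qquad 0<t<|\Omega|.$$
As $t\to0^+$ the right-hand side is finite --- and the bound is $(p-1)^{-1}$-homogeneous in $f$ --- exactly when $f\in L^{\frac np,\frac1{p-1}}(\Omega)$ in the case $p\le n$, and, using $s^{p/n}\lesssim s$ on the bounded domain $\Omega$, when $f\in L^{1,\frac1{p-1}}(\Omega)$ in the case $p>n$; reading off the size gives \eqref{first} and \eqref{second}.

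For the gradient bound I would bootstrap from the first two items. By the Lorentz-space inclusions recalled in Section~\ref{Sec function spaces} (for $1<p\le n$ one has $n/p<n$, hence $L^{n,1}(\Omega)\subset L^{n/p,1/(p-1)}(\Omega)$, and likewise $L^{n,1}(\Omega)\subset L^{1,1/(p-1)}(\Omega)$ when $p>n$), the previous step already gives $\|u\|_{L^\infty(\Omega)}\lesssim\|f\|_{L^{n,1}(\Omega)}^{1/(p-1)}$. The growth assumption (H3) then yields $|V(\cdot,u)|\le c|u|^{m_1}\le c\,\|u\|_{L^\infty}^{\,m_1+1-p}\,|u|^{p-1}$, so $V(\cdot,u)\in L^\infty(\Omega)\subset L^{n,1}(\Omega)$; combining the $L^\infty$ bound with the quantitative $L^1$-data estimates for $u$ (and for $|u|^{p-1}$) available from Theorem~\ref{thexistenceentropic} and \cite{BoccardoGalluet,Ferone_Posteraro_Rakotoson,Rakotoson2021HAL}, one arrives at $\|V(\cdot,u)\|_{L^{n,1}(\Omega)}\lesssim\big(1+\|f\|_{L^1}^{\,m_1+1-p}\big)\|f\|_{L^{n,1}(\Omega)}$. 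Rewriting \eqref{Eq-datumL1} as $-\Delta_p u=g$ with $g:=f-V(\cdot,u)$, which therefore lies in $L^{n,1}(\Omega)$ with $\|g\|_{L^{n,1}}\lesssim(1+\|f\|_{L^1}^{m_1+1-p})\|f\|_{L^{n,1}}$, I would finally invoke the sharp global Lipschitz regularity for $p$-Laplacian-type equations with right-hand side in $L^{n,1}$ on a smooth domain (the Cianchi--Mazya estimate, valid for $n\ge3$), namely $\|\nabla u\|_{L^\infty(\Omega)}\lesssim\|g\|_{L^{n,1}(\Omega)}^{1/(p-1)}$; inserting the bound on $\|g\|_{L^{n,1}}$ and using $(1+x)^{1/(p-1)}\lesssim 1+x^{1/(p-1)}$ then produces \eqref{gradientbounded}.

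The main obstacle is this last ingredient: the global boundedness of $\nabla u$ for quasilinear equations whose datum is merely in the Lorentz space $L^{n,1}$ is a genuinely deep fact (the Cianchi--Mazya theory), resting on second-order and potential-theoretic estimates and really using the smoothness of $\partial\Omega$ together with $n\ge3$. Everything else is bookkeeping: the Lorentz inclusions, the $(p-1)^{-1}$-homogeneity in $f$, the exponents generated by (H3), and the borderline Talenti-type symmetrization of items one and two, for which $L^{n/p,1/(p-1)}$ is exactly the threshold space (it cannot be weakened). As a minor point one should also check that the approximation used to reach the symmetrization inequality is compatible with the entropic-renormalized formulation when $f\in L^{p'}$ is only assumed, which is guaranteed by Remark~\ref{entropic-weak}.
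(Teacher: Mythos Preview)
Your proposal is correct and matches the paper's approach. The paper does not prove the first two items at all---it simply cites the references you mention, and your Talenti-type symmetrization sketch is exactly what those references contain. For the third item, the paper gives precisely your argument (in the paragraph immediately following Lemma~\ref{estimatesV}): rewrite $-\Delta_p u=f-V(\cdot,u)$, apply the Cianchi--Maz'ya $L^{n,1}\to L^\infty$ gradient estimate, and control $\|V(\cdot,u)\|_{L^{n,1}}$ in terms of $\|f\|_{L^{n,1}}$ and $\|f\|_{L^1}$. The only cosmetic difference is that the paper packages the last step as the separate estimate \eqref{estimateV_f} of Lemma~\ref{estimatesV}, obtaining $\|V(\cdot,u)\|_{L^{n,1}}^{1/(p-1)}\lesssim\|f\|_{L^{n,1}}^{1/(p-1)}\|f\|_{L^1}^{(m_1+1-p)/(p-1)}$ directly, whereas you split $|u|^{m_1}=\|u\|_\infty^{m_1+1-p}|u|^{p-1}$ and then appeal to $L^1$-data bounds; both routes give the same final inequality.
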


\vspace{2mm}

\begin{remark}
We observe that, for $p>n \ \Rightarrow \ L^{p'}\subset L^{1,\frac{1}{p-1}}$.

\noindent For $p\geq 2\ \Rightarrow \ L^{n,1}\subset L^{p'}$ since, if $p>n$, we have $p'<n'\leq n$; if $2\leq p\leq n$, then $\frac{p}{p-1}\leq p\leq n$, hence $p'\leq n$.
\end{remark}

\vspace{2mm}

The following Lemma provides estimates for the potential $V$.

\begin{lemma}\label{estimatesV}

Let $u$ be a weak solution of \eqref{divEq}:
$$-\Delta_p u +V(x;u)=f, \quad\quad  u=0 \quad {\rm on} \ \partial\Omega.$$

\vspace{2mm}

Assume that $V$ satisfies (H1), (H2), (H3). Let $m_1$ the number defined in condition (H3).

\vspace{2mm}

Let $m_3=\displaystyle \frac{n}{n-p}(p-1)$ \ if\ $1<p<n$ \ or \ $m_3\in[nm_1,+\infty[$ \ if \ $p\geq n$.

\vspace{2mm}

\begin{itemize}

\item If \ $u\in L^\infty(\Omega)$, then
\begin{equation}\label{estimateV_u}
||V(\cdot;u(\cdot))||_{L^{n,1}}^{\frac1{p-1}}\lesssim||u||_\infty\cdot
||u||_{L^{m_3,1}}^{\frac{m_1+1-p}{p-1}}.
\end{equation}

\item If \ $f\in L^{n,1}(\Omega)$, then
\begin{equation}\label{estimateV_f}
||V(\cdot;u(\cdot)||^{\frac1{p-1}}_{L^{n,1}}\lesssim ||f||_{L^{n,1}}^\frac1{p-1}\cdot||f||_{L^1}^{\frac{m_1+1-p}{p-1}}.
\end{equation}

\end{itemize}

\end{lemma}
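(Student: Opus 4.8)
The plan is to prove the two estimates \eqref{estimateV_u} and \eqref{estimateV_f} separately, in each case reducing the $L^{n,1}$ quasi-norm of $V(\cdot;u(\cdot))$ to quantities controlled by the hypotheses via the pointwise growth bound (H3) and standard properties of rearrangements and Lorentz spaces. First I would record the elementary consequence of (H2) and (H3) that $|V(x,u(x))|\le c\,|u(x)|^{m_1}$ almost everywhere, so that by the monotonicity of the decreasing rearrangement under composition with a nondecreasing function, $(V(\cdot;u))_*(s)\lesssim (|u|^{m_1})_*(s)=\big(u_*(s)\big)^{m_1}$ for $s\in(0,|\Omega|)$. Hence
$$
\|V(\cdot;u)\|_{L^{n,1}}\lesssim \int_0^{|\Omega|} s^{\frac1n-1}\big(u_*(s)\big)^{m_1}\,ds .
$$
The exponent $m_1$ is split as $m_1=(p-1)+(m_1+1-p)$ with both summands nonnegative by the constraint $m_1\in[p-1,\overline m_1[$; I would then write $\big(u_*(s)\big)^{m_1}=\big(u_*(s)\big)^{p-1}\cdot\big(u_*(s)\big)^{m_1+1-p}$.

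For \eqref{estimateV_u}, where $u\in L^\infty$, I would simply bound the second factor by $\|u\|_\infty^{m_1+1-p}$ and keep $\big(u_*(s)\big)^{p-1}$ inside. This gives
$$
\|V(\cdot;u)\|_{L^{n,1}}\lesssim \|u\|_\infty^{m_1+1-p}\int_0^{|\Omega|} s^{\frac1n-1}\big(u_*(s)\big)^{p-1}\,ds
=\|u\|_\infty^{m_1+1-p}\,\big\|\,u^{p-1}\big\|_{L^{n,1}} .
$$
Next I would identify $\big\|u^{p-1}\big\|_{L^{n,1}}$ with a power of a Lorentz quasi-norm of $u$: since $(u^{p-1})_*=(u_*)^{p-1}$, a change of variables shows $\big\|u^{p-1}\big\|_{L^{n,1}}\simeq \|u\|_{L^{(p-1)n,\,p-1}}^{\,p-1}$, and because $m_3=\frac{n(p-1)}{n-p}\ge (p-1)n$ when $p<n$ (respectively $m_3$ is chosen large when $p\ge n$) the inclusion $L^{m_3,1}\subset L^{(p-1)n,\,p-1}$ of Lorentz spaces — recorded in Section \ref{Sec function spaces} — yields $\|u\|_{L^{(p-1)n,p-1}}\lesssim\|u\|_{L^{m_3,1}}$. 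Raising to the power $\frac1{p-1}$ and recombining the $\|u\|_\infty$ factor then produces exactly \eqref{estimateV_u}. For the case $p\ge n$ one must keep track of the freedom in the choice of $m_3$; this is where the hypothesis $m_3\in[nm_1,\infty[$ enters, ensuring the requisite Lorentz embedding.

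For \eqref{estimateV_f}, the idea is the same but now one first uses the a priori regularity of $u$ in terms of $f$ rather than an $L^\infty$ bound on $u$. Since $f\in L^{n,1}$ and (for the meaningful range) $f\in L^{p'}$, Proposition \ref{prop_u_bounded} (the gradient bound \eqref{gradientbounded}) together with the Sobolev-type embedding gives $\|u\|_\infty\lesssim \big(1+\|f\|_{L^1}^{\frac{m_1+1-p}{p-1}}\big)\|f\|_{L^{n,1}}^{\frac1{p-1}}$ and, likewise, $\|u\|_{L^{m_3,1}}\lesssim \|f\|_{L^{n,1}}^{\frac1{p-1}}$ from the classical rearrangement estimates for the $p$-Laplacian (the results gathered in the Proposition preceding Lemma \ref{estimatesV}). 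Substituting these into the already-established inequality \eqref{estimateV_u} and simplifying the powers of $\|f\|_{L^1}$ and $\|f\|_{L^{n,1}}$ gives \eqref{estimateV_f}. I expect the main obstacle to be bookkeeping: correctly tracking the exponents $\frac1{p-1}$, $\frac{m_1+1-p}{p-1}$ and the Lorentz second indices through the composition with $u$, and verifying that the Lorentz inclusion $L^{m_3,1}\subset L^{(p-1)n,\,p-1}$ is legitimate in both the subcritical regime $p<n$ (where $m_3$ is pinned down) and the regime $p\ge n$ (where $m_3$ is a free parameter) — the borderline arithmetic $\frac{n(p-1)}{n-p}$ versus $(p-1)n$ is exactly the point that must be checked with care.
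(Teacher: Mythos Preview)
There are two genuine gaps in your proposal.

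\textbf{For \eqref{estimateV_u}: the splitting is backwards.} You pull $\|u\|_\infty^{m_1+1-p}$ out of the integral and keep $(u_*)^{p-1}$ inside, obtaining
\[
\|V(\cdot;u)\|_{L^{n,1}}\lesssim \|u\|_\infty^{\,m_1+1-p}\,\|u\|_{L^{n(p-1),\,p-1}}^{\,p-1}.
\]
Even if the embedding you claim were valid, this would give (after raising to $\tfrac{1}{p-1}$) the bound $\|u\|_\infty^{(m_1+1-p)/(p-1)}\|u\|_{L^{m_3,1}}$, which is \emph{not} \eqref{estimateV_u}: the exponents on $\|u\|_\infty$ and $\|u\|_{L^{m_3,1}}$ are interchanged. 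Moreover the embedding $L^{m_3,1}\subset L^{n(p-1),\,p-1}$ that you assert is in general false: for $p<n$ one has $m_3=\tfrac{n(p-1)}{n-p}\ge n(p-1)$ only when $p\ge n-1$, so for $p<n-1$ (e.g.\ $p=2$, $n=4$) the first Lorentz index goes the wrong way. The correct move is the opposite one: bound $(u_*)^{p-1}\le \|u\|_\infty^{\,p-1}$ and keep $(u_*)^{m_1+1-p}$ inside, which yields
\[
\|V(\cdot;u)\|_{L^{n,1}}\lesssim \|u\|_\infty^{\,p-1}\,\|u\|_{L^{n(m_1+1-p),\,m_1+1-p}}^{\,m_1+1-p}.
\]
Now the strict inequality $m_1<\overline m_1=(p-1)\bigl(1+\tfrac{1}{n-p}\bigr)$ in (H3) gives $n(m_1+1-p)<\tfrac{n(p-1)}{n-p}=m_3$, hence the legitimate Lorentz inclusion $L^{m_3,1}\subset L^{n(m_1+1-p),\,m_1+1-p}$, and \eqref{estimateV_u} follows with the correct exponents. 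For $p\ge n$ the same argument works since $m_3\ge nm_1> n(m_1+1-p)$.

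\textbf{For \eqref{estimateV_f}: circularity.} You invoke the gradient bound \eqref{gradientbounded} from Proposition \ref{prop_u_bounded}, but the paper explicitly records (immediately after Lemma \ref{estimatesV}) that \eqref{gradientbounded} is \emph{derived from} \eqref{estimateV_f} via the Cianchi--Maz'ya estimate, so you cannot use it here. What one may use non-circularly are the a priori bounds on $u$ itself: \eqref{first} (together with $L^{n,1}\subset L^{n/p,\,1/(p-1)}$) controls $\|u\|_\infty$ in terms of $\|f\|_{L^{n,1}}^{1/(p-1)}$, and the classical $L^1$ rearrangement estimate for the $p$-Laplacian controls $u$ in the appropriate Lorentz scale in terms of $\|f\|_{L^1}$. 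Feeding these into the (correctly oriented) inequality from the first paragraph gives \eqref{estimateV_f}.
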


\vspace{2mm}

We point out that the proof of \eqref{gradientbounded} in Proposition \ref{prop_u_bounded} is consequence of Cianchi-Maz'ya estimate and \eqref{estimateV_f} in Lemma \ref{estimatesV}. In fact, if $u$ is a weak solution of \eqref{divEq} for $f\in L^{n,1}(\Omega)$, then  $-\Delta_p u=f-V(\cdot; u)\in L^{n,1}$. By Cianchi-Maz'ya result (see \cite[Theorem 4.4]{Cianchi-Mazja_European}) and our estimate \eqref{estimateV_f}, we have
\begin{eqnarray*}
||\nabla u||_{L^\infty} &\lesssim &||f-V(\cdot;u)||^{\frac1{p-1}}_{L^{n,1}}\lesssim
||f||_{L^{n,1}}^{\frac1{p-1}}+||V(\cdot;u)||_{L^{n,1}}^{\frac1{p-1}}\\
& \lesssim & ||f||_{L^{n,1}}^{\frac1{p-1}}+||f||_{L^{n,1}}^\frac1{p-1}\cdot||f||_{L^1}^{\frac{m_1+1-p}{p-1}}\\
& \lesssim & \Big(1+||f||^{\frac{m_1+1-p}{p-1}}_{L^1}\Big)||f||^{\frac1{p-1}}_{L^{n,1}(\Omega)}.
\end{eqnarray*}

\vspace{4mm}

\section{The H\"olderian mappings for the case \texorpdfstring{$p\geq 2$}{R\texttwosuperior}}\label{SecApplHolderianpgreater2}

In this Section we apply our previuos results to H\"olderian mappings in order to obtain regularity results on the gradient of the solution of the equation \eqref{Eq-datumL1}:
\begin{equation*}
 -\div(|\nabla u|^{p-2}\nabla u)+V(x;u)=f\in L^1,  \ \ \ \ u=0 \ {\rm on} \ \partial \Omega.
 \end{equation*}

\vspace{1mm}
We recall again that, even if the next results remain valid in the case $p\geq n$, we will consider only the meaningful case
$$p<n.$$
For the case $p\geq n$, the number $p^*=\frac{np}{n-p}$ appearing in the case $p<n$, should be replaced by any finite number.

Recall also that
$$p'=\frac{p}{p-1}, \ \ \   n'=\frac{n}{n-1}, \ \ \  (p^*)' = \frac{np}{np-n+p}.$$

\begin{theorem}\label{th_estimate_gradient}
Let $2\leq p<n$, $f_1,f_2\in L^1(\Omega)$ and $u_1,u_2$ be the corresponding entropic-renormalized solution of \eqref{Eq-datumL1}. Then
\begin{enumerate}

\item \ $\displaystyle \int_\Omega |\nabla T_k(u_1-u_2)|^p\,dx\leq c \int_\Omega |f_1-f_2|\,dx$,

\vspace{1mm}

\item \ $ u_1,u_2\in W_0^{1,1}(\Omega)$ \ and \ $\displaystyle ||\nabla(u_1-u_2)||_{L^{n'(p-1),\infty}(\Omega)}\leq c ||f_1-f_2||^{\frac{1}{p-1}}_{L^1(\Omega)}$,

\end{enumerate}

\vspace{1mm}
where  $n'=\frac{n}{n-1}$ and $c$ is a constant depending only on $p,\Omega, V$.

\end{theorem}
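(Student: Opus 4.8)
The plan is to exploit the strong coercivity inequality \eqref{ineq_coercivity} (valid since $p\ge 2$) together with the two equations, tested against a suitable truncation of $u_1-u_2$, and then to pass from an $L^p$-estimate on truncated gradients to the Marcinkiewicz estimate in part (2) via a standard level-set argument. First I would write $u_1,u_2$ as entropic-renormalized solutions of \eqref{Eq-datumL1} with data $f_1,f_2$, set $w=u_1-u_2$, and use $\eta\equiv 1$, $B=B_\delta$ approximating the truncation $T_k$, and $\varphi$ appropriately chosen so that $\eta B(u_i-\varphi)$ reproduces $T_k(w)$ in the limit. Subtracting the two renormalized formulations gives, in the limit $\delta\to 0$,
$$
\int_\Omega\bigl(|\nabla u_1|^{p-2}\nabla u_1-|\nabla u_2|^{p-2}\nabla u_2\bigr)\cdot\nabla T_k(w)\,dx
+\int_\Omega\bigl(V(x;u_1)-V(x;u_2)\bigr)T_k(w)\,dx=\int_\Omega (f_1-f_2)\,T_k(w)\,dx .
$$
On the set $\{|w|<k\}$ one has $\nabla T_k(w)=\nabla w=\nabla u_1-\nabla u_2$, so \eqref{ineq_coercivity} bounds the first integral below by $\alpha_p\int_\Omega|\nabla T_k(w)|^p\,dx$; the second integral is nonnegative because $V(x;\cdot)$ is nondecreasing by (H2) and $T_k(w)$ has the same sign as $w$; and the right-hand side is at most $\|T_k(w)\|_\infty\|f_1-f_2\|_{L^1}\le k\,\|f_1-f_2\|_{L^1}$. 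This yields
$$
\int_\Omega|\nabla T_k(w)|^p\,dx\le \frac{k}{\alpha_p}\,\|f_1-f_2\|_{L^1(\Omega)},
$$
which, after absorbing $k$ on the right (or rather noting that the constant in (1) need not be uniform in $k$ — reading the statement, (1) is exactly the displayed bound without the factor $k$, so one takes $k$ fixed and absorbs it into $c$, or more precisely one keeps track that the relevant estimate is $\int|\nabla T_k w|^p\lesssim k\|f_1-f_2\|_{L^1}$ and states (1) for the normalization $k=1$), gives part (1).

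For part (2), the plan is the classical Boccardo–Gallouët / Benilan-type argument deriving a weak-$L^q$ bound on $\nabla w$ from the family of estimates $\int_\Omega|\nabla T_k(w)|^p\,dx\le C k$, $k>0$, with $C=\alpha_p^{-1}\|f_1-f_2\|_{L^1}$. One estimates, for $\lambda>0$ and $k>0$,
$$
|\{|\nabla w|>\lambda\}|\le |\{|w|\ge k\}|+|\{|\nabla T_k(w)|>\lambda\}|
\le |\{|w|\ge k\}|+\lambda^{-p}\int_\Omega|\nabla T_k(w)|^p\,dx\le |\{|w|\ge k\}|+Ck\lambda^{-p}.
$$
The term $|\{|w|\ge k\}|$ is controlled using the Sobolev embedding of $T_k(w)\in W_0^{1,p}$ into $L^{p^*}$: $k^{p^*}|\{|w|\ge k\}|\le \|T_k(w)\|_{p^*}^{p^*}\lesssim \|\nabla T_k(w)\|_p^{p^*}\le (Ck)^{p^*/p}$, hence $|\{|w|\ge k\}|\lesssim C^{p^*/p}k^{p^*/p-p^*}=C^{p^*/p}k^{-p^*/p'}$; recalling $p^*/p'=n(p-1)/(n-p)=n'(p-1)\cdot\frac{n-1}{n-p}$… one then optimizes the sum $C^{p^*/p}k^{-p^*(p-1)/p}+Ck\lambda^{-p}$ over $k$, choosing $k\sim\lambda^{p'/? }$ so that both terms balance, and arrives at $|\{|\nabla w|>\lambda\}|\lesssim C^{1/(p-1)}\lambda^{-n'(p-1)}$, i.e. $\|\nabla w\|_{L^{n'(p-1),\infty}}\lesssim C^{1/(p-1)}=\alpha_p^{-1/(p-1)}\|f_1-f_2\|_{L^1}^{1/(p-1)}$, which is the claimed bound. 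Along the way, the same family of estimates combined with property \textbf{3.} in Definition \ref{entropic_sol} and the condition $p>2-\frac1n>\frac{2n-1}{n}$ (which holds since $p\ge 2$) gives $\nabla w\in L^1$, and hence $u_1,u_2\in W_0^{1,1}(\Omega)$; this last membership can also be quoted directly from the final assertion of Theorem \ref{thexistenceentropic} since $p\ge 2>2-\frac1n$.

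The main obstacle I anticipate is \emph{not} the computation in part (2), which is routine once the truncated estimate is in hand, but the \emph{justification of the test-function manipulation} in the entropic-renormalized framework: one cannot simply plug $T_k(u_1-u_2)$ into \eqref{renormalized sol} because $T_k(u_1-u_2)$ is not of the admissible form $\eta B(u_i-\varphi)$ with $B$ compactly supported away from infinity and $\varphi$ bounded. The careful argument requires approximating $T_k(u_1-u_2)$ by admissible test functions (truncating also $\varphi$ at level $h\to\infty$, regularizing $B$), subtracting the two weak-renormalized identities, and passing to the limit using the a.e. convergence of $\nabla u_j$ supplied by Theorem \ref{thexistenceentropic}, the equi-integrability of the lower-order terms (here $V(\cdot;u_i)\in L^1$ by hypothesis), and Fatou/dominated convergence to keep the sign of the $V$-term and the coercivity lower bound. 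This is precisely the technical heart of Rakotoson's uniqueness theory, and in a self-contained write-up one would either reproduce that limiting procedure or cite \cite{Rakotoson1994uniqueness,Rakotoson1996proprietes,Benilan1995} for the admissibility of $T_k(u_1-u_2)$ as a (limit of) test function(s); I would do the latter and then carry out the elementary estimates above.
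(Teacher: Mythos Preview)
The paper does not supply a proof of this theorem; it is stated without proof (the authors explicitly warn in the Introduction that they ``will exhibit only some proofs'') and is followed immediately by Corollary~\ref{cor_extensionT}. There is therefore nothing in the paper to compare your argument against.

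That said, your plan is exactly the standard Boccardo--Gallou\"et / B\'enilan et~al.\ route and is correct. Two remarks. First, you rightly noticed that the inequality in item~(1), as literally printed, is missing the factor $k$ on the right-hand side (or, equivalently, the constant $c$ must be allowed to depend on $k$); the version you actually need and use, $\int_\Omega|\nabla T_k(w)|^p\,dx\le \alpha_p^{-1}k\|f_1-f_2\|_{L^1}$, is the correct one and is consistent with property~\textbf{3.} in Definition~\ref{entropic_sol}. Second, your identification of the only genuine technical obstacle---justifying $T_k(u_1-u_2)$ as a limit of admissible test functions $\eta B(u_i-\varphi)$ in the renormalized formulation~\eqref{renormalized sol}---and your decision to appeal to \cite{Rakotoson1994uniqueness,Rakotoson1996proprietes,Benilan1995} for that step is exactly what a self-contained write-up in the spirit of this survey would do. The level-set/optimization computation for part~(2) that you sketch is routine and leads to the stated exponent $n'(p-1)$.
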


As a consequence we have

\begin{corollary}\label{cor_extensionT}{\rm (of Theorem \ref{th_estimate_gradient})}

 Assume (H1) and (H2) and let $u$ be the unique entropic-renormalized solution of the Dirichlet problem \eqref{Eq-datumL1}. Let ${\mathcal T}$ be the mapping $f\mapsto {\mathcal T} f$, with
$${\mathcal T}f=\nabla u.$$
For $2\leq p<n$ we extend the mapping
$$
{\mathcal T}: L^1(\Omega)\longrightarrow [L^{n'(p-1),\infty}(\Omega)]^n
$$
and  ${\mathcal T}$ is \ $\frac{1}{p-1}$-H\"olderian, i.e.
$$
 \exists \, c(p,\Omega)>0 \ : \ ||{\mathcal T}f_1-{\mathcal T}f_2||_{L^{n'(p-1),\infty}}\leq c(p,\Omega)||f_1-f_2||^{\frac1{p-1}}_{L^1(\Omega)},
$$
where $n'=\frac{n}{n-1}$.
\end{corollary}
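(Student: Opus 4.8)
The plan is to obtain the statement almost immediately from Theorem~\ref{th_estimate_gradient}, once the map is correctly set up. First I would use Theorem~\ref{thexistenceentropic}: for each $f\in L^1(\Omega)$ there is a \emph{unique} entropic-renormalized solution $u$ of \eqref{Eq-datumL1}, and since $2\le p<n$ gives $p>2-\frac1n$, that solution satisfies $u\in W^{1,1}_0(\Omega)$, so $\nabla u$ is a well-defined element of $[L^1(\Omega)]^n$. Setting $\mathcal Tf=\nabla u$ therefore defines a single-valued map $\mathcal T\colon L^1(\Omega)\to[L^1(\Omega)]^n$. I would then check that this genuinely extends the mapping \eqref{defT}: if $f\in L^1(\Omega)\cap W^{-1,p'}(\Omega)$, the weak solution of \eqref{divEq} in the sense of \eqref{weak sol} is also an entropic-renormalized solution of \eqref{Eq-datumL1} (Remark~\ref{entropic-weak}, together with the uniqueness of the weak solution on $L^1\cap W^{-1,p'}$ from Proposition~\ref{defweaksol}), so on that subspace the newly defined $\mathcal T$ agrees with the old one; moreover $\mathcal T0=0$ since $u\equiv0$ is the (unique) solution corresponding to $f=0$ by (H2).

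Next I would invoke Theorem~\ref{th_estimate_gradient}(2) verbatim. Given $f_1,f_2\in L^1(\Omega)$ with entropic-renormalized solutions $u_1,u_2$, it yields $u_1,u_2\in W^{1,1}_0(\Omega)$ and
$$\|\mathcal Tf_1-\mathcal Tf_2\|_{L^{n'(p-1),\infty}(\Omega)}=\|\nabla(u_1-u_2)\|_{L^{n'(p-1),\infty}(\Omega)}\le c\,\|f_1-f_2\|_{L^1(\Omega)}^{\frac1{p-1}},$$
with $c=c(p,\Omega)$ (the dependence on the fixed potential $V$ being suppressed). Taking $f_2=0$ and recalling $\mathcal T0=0$ gives $\|\mathcal Tf_1\|_{L^{n'(p-1),\infty}(\Omega)}\le c\,\|f_1\|_{L^1(\Omega)}^{\frac1{p-1}}<\infty$ for every $f_1\in L^1(\Omega)$, so $\mathcal T$ indeed maps $L^1(\Omega)$ into $[L^{n'(p-1),\infty}(\Omega)]^n$; and the displayed inequality is exactly the asserted $\tfrac1{p-1}$-H\"olderianity. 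I would also note that $p\ge2$ forces $n'(p-1)=\frac{n(p-1)}{n-1}\ge n'>1$, so that $L^{n'(p-1),\infty}(\Omega)$ is a genuine Banach space and the H\"older statement is meaningful in the norm sense.

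For completeness I would add the alternative, purely abstract construction of the extension and remark on where the only care is needed. Since $L^1(\Omega)\cap W^{-1,p'}(\Omega)$ is dense in $L^1(\Omega)$ (it contains $L^{p'}(\Omega)$, hence $C^\infty_0(\Omega)$) and $\mathcal T$ is uniformly continuous there by Theorem~\ref{th_estimate_gradient}(2), it extends uniquely to a $\tfrac1{p-1}$-H\"olderian map on $L^1(\Omega)$ with values in the complete space $[L^{n'(p-1),\infty}(\Omega)]^n$; this extension coincides with the one above because of the a.e.\ convergence of the gradients of approximating solutions recorded in Theorem~\ref{thexistenceentropic}. There is essentially no obstacle internal to the corollary: the one delicate point is precisely this consistency — that the entropic-renormalized solution operator is single-valued on all of $L^1$, agrees with the weak-solution operator where both are defined, and that the two descriptions of the extension give the same map. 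The genuinely hard analytic work — the truncated energy bound (1) and the Marcinkiewicz-type passage to the weak-Lorentz estimate (2), which rests on the strong coercivity \eqref{ineq_coercivity} available only for $p\ge2$ — is already carried out in the proof of Theorem~\ref{th_estimate_gradient} and is simply imported here.
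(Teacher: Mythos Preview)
Your proposal is correct and matches the paper's approach: the paper states the corollary as an immediate consequence of Theorem~\ref{th_estimate_gradient} without giving a separate proof, and what you have written is precisely the natural unpacking of that implication, with Theorem~\ref{thexistenceentropic} supplying the well-definedness of $\mathcal T$ on all of $L^1$ and Theorem~\ref{th_estimate_gradient}(2) supplying the H\"older estimate. Your additional remarks on consistency with the weak-solution operator and the alternative density-based extension are sound but go beyond what the paper records.
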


\begin{remark}
If $2\leq p<n$ then $L^p(\Omega)\subset L^{n'(p-1),\infty}(\Omega)$, since $n'<p'$  and hence $n'(p-1)<p$.
\end{remark}

\vspace{2mm}

\begin{theorem}\label{ThTholerianp*'p}

Assume (H1) and (H2) and let $u$ be the unique entropic-renormalized solution of the Dirichlet problem \eqref{Eq-datumL1}. Let ${\mathcal T}$ be the mapping $f\mapsto {\mathcal T} f$, with
$${\mathcal T}f=\nabla u.$$
For $2\leq p<n$ the previous mapping in Corollary \ref{cor_extensionT} is also $\frac{1}{p-1}$-H\"olderian from $L^{(p^*)'} $ into  $ [L^p(\Omega)]^n$,
$$
{\mathcal T}: L^{(p^*)'} \to [L^p(\Omega)]^n,
$$
$$ {\rm i.e.} \  \exists \, c_p>0 \ : \ ||{\mathcal T}f_1-{\mathcal T}f_2||_{L^{p}}\leq c_p||f_1-f_2||^{\frac1{p-1}}_{L^{(p^*)'}},$$
where
$p'=\frac{p}{p-1}, \ \ \   p^*=\frac{np}{n-p}, \ \ \  (p^*)' = \frac{np}{np-n+p}$. 
\end{theorem}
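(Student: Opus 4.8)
The plan is to combine three ingredients, all available in the range $2\le p<n$: the strong coercivity inequality \eqref{ineq_coercivity} (valid since $p\ge 2$), the monotonicity of $V$ encoded in (H2), and the Sobolev embedding $W_0^{1,p}(\Omega)\hookrightarrow L^{p^*}(\Omega)$ (valid since $p<n$). Fix $f_1,f_2\in L^{(p^*)'}(\Omega)$. Since $L^{(p^*)'}(\Omega)\subset L^1(\Omega)\cap W^{-1,p'}(\Omega)$, Proposition~\ref{defweaksol} applies, and a weak solution in $W_0^{1,p}(\Omega)$ is in particular an entropic-renormalized solution of \eqref{Eq-datumL1}, so by the uniqueness in Theorem~\ref{thexistenceentropic} the solutions $u_1,u_2$ coincide with the corresponding weak solutions. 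Hence $u_1,u_2\in W_0^{1,p}(\Omega)$ satisfy \eqref{weak sol}, and $\mathcal T f_i=\nabla u_i$, so $\mathcal T f_1-\mathcal T f_2=\nabla w$ with $w:=u_1-u_2\in W_0^{1,p}(\Omega)$.

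First I would test the two copies of \eqref{weak sol} with the admissible function $\varphi=T_k(w)\in W_0^{1,p}(\Omega)\cap L^\infty(\Omega)$ and subtract, obtaining
\begin{equation*}
\int_\Omega\big(|\nabla u_1|^{p-2}\nabla u_1-|\nabla u_2|^{p-2}\nabla u_2\big)\cdot\nabla T_k(w)\,dx+\int_\Omega\big(V(x;u_1)-V(x;u_2)\big)T_k(w)\,dx=\int_\Omega(f_1-f_2)\,T_k(w)\,dx.
\end{equation*}
Next I would estimate each term. Since $\nabla T_k(w)=\nabla w\,\mathbf 1_{\{|w|<k\}}$, the pointwise coercivity \eqref{ineq_coercivity} bounds the first integral below by $\alpha_p\int_{\{|w|<k\}}|\nabla w|^p\,dx$. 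By (H2) the factors $V(x;u_1)-V(x;u_2)$ and $T_k(w)$ have the same sign (that of $w$) a.e., so the second integral is nonnegative and may simply be discarded. On the right, Hölder's inequality together with $|T_k(w)|\le|w|$ and the Sobolev embedding give $\int_\Omega(f_1-f_2)T_k(w)\,dx\le\|f_1-f_2\|_{L^{(p^*)'}}\|w\|_{L^{p^*}}\le c_S\,\|f_1-f_2\|_{L^{(p^*)'}}\|\nabla w\|_{L^p}$. Hence $\alpha_p\int_{\{|w|<k\}}|\nabla w|^p\,dx\le c_S\,\|f_1-f_2\|_{L^{(p^*)'}}\|\nabla w\|_{L^p}$; letting $k\to\infty$ and using monotone convergence on the left yields $\alpha_p\|\nabla w\|_{L^p}^p\le c_S\,\|f_1-f_2\|_{L^{(p^*)'}}\|\nabla w\|_{L^p}$, that is $\|\nabla w\|_{L^p}^{p-1}\le(c_S/\alpha_p)\,\|f_1-f_2\|_{L^{(p^*)'}}$, which is the asserted $\tfrac1{p-1}$-Hölder estimate with $c_p=(c_S/\alpha_p)^{1/(p-1)}$.

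The only genuinely delicate point is the use of the truncated test functions $T_k(w)$ and the passage to the limit: one must ensure the $V$-term does not obstruct the argument (it does not, being nonnegative and discarded) and that $\nabla T_k(w)=\nabla w\,\mathbf 1_{\{|w|<k\}}\to\nabla w$ in $L^p(\Omega)$ as $k\to\infty$, which is standard for $W_0^{1,p}$-functions. Alternatively, once $w\in W_0^{1,p}(\Omega)\hookrightarrow L^{p^*}(\Omega)$ is known, one may bypass truncations and test directly with $\varphi=w$, since $f_1-f_2\in W^{-1,p'}(\Omega)$ acts continuously on $W_0^{1,p}(\Omega)$ and the term $\int_\Omega(V(x;u_1)-V(x;u_2))(u_1-u_2)\,dx\ge0$ is again discarded; I therefore expect no real obstacle.
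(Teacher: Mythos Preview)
Your proof is correct and follows essentially the same approach as the paper: both combine the strong coercivity \eqref{ineq_coercivity}, the monotonicity of $V$ from (H2) to discard the potential term, and the Poincar\'e--Sobolev inequality to close the estimate. The paper proceeds by a direct (somewhat formal) integration-by-parts computation with $u_1-u_2$, whereas you are slightly more careful in first testing with the admissible truncations $T_k(w)\in W_0^{1,p}(\Omega)\cap L^\infty(\Omega)$ and then passing to the limit; this is a cosmetic difference, not a substantive one.
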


\begin{proof}
The proof is based on the strong coercivity of the $p$-laplacian and the Poincar\'e-Sobolev inequality. In fact, for $p\geq 2$, we recall that, from \eqref{ineq_coercivity}, there exists a constant $\alpha_p>0$ such that, $\forall\,\xi\in\mathbb R^n$, \  $\forall\,\xi'\in\mathbb R^n$, \begin{equation*}
\Big(|\xi|^{p-2}\xi-|\xi'|^{p-2}\xi',\xi-\xi'\Big)_{\mathbb R^n}\geq \alpha_p|\xi-\xi'|^p.
\end{equation*}
Therefore, for two data $f_1$ and $f_2$ in $L^{p'}(\Omega)$, dropping the non negative term, we have
\begin{eqnarray*}
&& \hspace{-3.5mm} \!\!\!\!\! c_p\!\int_\Omega|{\mathcal T}f_1-{\mathcal T}f_2|^pdx\leq\!\int_\Omega\Big(|\nabla u_1|^{p-2}\nabla u_1-|\nabla u_2|^{p-2}\nabla u_2,\nabla(u_1-u_2)\Big)dx\\
&=& \!-\int_\Omega (u_1-u_2)\cdot\div\Big(|\nabla u_1|^{p-2}\nabla u_1-|\nabla u_2|^{p-2}\nabla u_2\Big)\, dx\\
&=& \!\!\!\!\!\int_\Omega\! \left[-\div(|\nabla u_1|^{p-2}\nabla u_1)\cdot (u_1-u_2)\!+\!\div(|\nabla u_2|^{p-2}\nabla u_2)\cdot (u_1-u_2) \right] dx\\
&=& \!\!\int_\Omega \left[(f_1-V(x,u_1))\cdot (u_1-u_2)+(-f_2+V(x,u_2))\cdot (u_1-u_2)\right] dx\\
&=& \!\int_\Omega \left[(f_1-f_2)(u_1-u_2)+ (V(x,u_2)-V(x,u_1))\cdot(u_1-u_2)\right]\,dx\\
&\leq & \int_\Omega (f_1-f_2)(u_1-u_2)\, dx,
\end{eqnarray*}
where in the last inequality we have used the monotonicity of $V$ with respect to the second variable, which implies $(V(x,u_2)-V(x,u_1))\cdot(u_1-u_2)\leq 0$.

By Poincar\'e--Sobolev inequality we have
$$
\int_\Omega (f_1-f_2)(u_1-u_2)\, dx\leq c_{1p}||f_1-f_2||_{L^{(p^*)'}}||{\mathcal T}f_1-{\mathcal T}f_2||_{L^p}
$$
so that
\begin{equation*}
||{\mathcal T}f_1-{\mathcal T}f_2||_{L^p(\Omega)}\leq c_{2p}||f_1-f_2||^{\frac1{p-1}}_{L^{(p^*)'}}.
\end{equation*}

\end{proof}

\begin{theorem}\label{thgradientLorentz}
Assume (H1) and (H2) and let $u$ the unique entropic-renormalized solution of the Dirichlet problem \eqref{Eq-datumL1}.

\vspace{1mm}

Let $2\leq p<n$, \  $r\in [1,+\infty]$, \ $1\leq k\le (p^*)'$ \ and \ $\theta=p^*\Big(1-\dfrac1k\Big)$.

\vspace{1mm}

\begin{enumerate}

\item If $0<\theta<1$, i.e.  $1< k< (p^*)'$,
then
$$
||{\mathcal T}f_1-{\mathcal T}f_2||_{L^{k^*(p-1),r(p-1)}}\leq c||f_1-f_2||_{L^{k,r}}^{\frac1{p-1}},
$$
for $f_1,\ f_2$ in $L^{k,r}(\Omega)$, with $k^*=\dfrac{nk}{n-k}$. 

\vspace{1mm}
\noindent In particular, if $f\in L^{k,r}(\Omega)$, then $ \nabla u\in [L^{k^*(p-1),r(p-1)}(\Omega)]^n$.

\vspace{3mm}
\item If $\theta=0$, i.e. $k=1$, then
$$
{\mathcal T}: G\Gamma(1,r;t^{-1}) \to G\Gamma(\infty,r(p-1);t^{-1},t^{\frac{1}{n'(p-1)}})
$$
is $\frac{1}{p-1}$-H\"olderian.

\vspace{3mm}
\item If $\theta=1$, i.e. $k=(p^*)'$, then
$$
{\mathcal T}: G\Gamma((p^*)',r; v,w)\to G\Gamma(p,r(p-1);v_1,w_1)
$$
is $\frac{1}{p-1}$-H\"olderian, where
$$v=t^{-1}(1-\log t)^\gamma, \ w=(1-\log t)^\beta, \ \gamma>-1, \ \beta\in\mathbb R, \ \gamma +\beta \dfrac{r}{(p^*)'}+1<0,$$
$$
\!\!\!v_1=t^{-1}(1-\log t)^{\gamma_1}\!, w_1=(1-\log t)^{\beta_1}\!, \gamma_1>-1, \beta_1\in\mathbb R, \gamma_1 +\frac{\beta_1 r(p-1)}{p}+1<0$$
and such that
\begin{equation}\label{condition exponents}
\frac{\gamma}{r}+\frac{\beta}{(p^*)'}=\frac{\gamma_1}{r}+\frac{\beta_1(p-1)}{p}.
\end{equation}

\end{enumerate}


\end{theorem}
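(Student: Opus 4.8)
The plan is to view $\mathcal T\colon f\mapsto\nabla u$ as a single nonlinear $\tfrac1{p-1}$-H\"{o}lderian mapping acting between one fixed compatible couple on the datum side and one on the gradient side, to apply Theorem~\ref{AFFGHR_Th2-2}, and then to read off the three cases as identifications of the resulting logarithmic interpolation spaces via Theorem~\ref{Th identifications}. On the datum side take $(X_0,X_1)=\bigl(L^1(\Omega),L^{(p^*)'}(\Omega)\bigr)$: here $X_1\subset X_0$ because $|\Omega|<\infty$, and $X_1$ is dense in $X_0$ since $C_0^\infty(\Omega)\subset L^{(p^*)'}(\Omega)$ is dense in $L^1(\Omega)$. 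On the gradient side take $(Y_0,Y_1)=\bigl([L^{n'(p-1),\infty}(\Omega)]^n,[L^p(\Omega)]^n\bigr)$, a compatible couple with $Y_1\subset Y_0$ by the Remark after Corollary~\ref{cor_extensionT} (as $n'(p-1)<p$, whence $L^p\subset L^{n'(p-1),\infty}$). By Corollary~\ref{cor_extensionT} and Theorem~\ref{ThTholerianp*'p}, the extended $\mathcal T$ is globally $\alpha$-H\"{o}lderian with $\alpha=\tfrac1{p-1}\le1$ (since $p\ge2$) both as $\mathcal T\colon X_0\to Y_0$ and as $\mathcal T\colon X_1\to Y_1$. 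Since real interpolation commutes with finite powers, $\bigl([A]^n,[B]^n\bigr)_{\theta,s;\mu}=\bigl[(A,B)_{\theta,s;\mu}\bigr]^n$, Theorem~\ref{AFFGHR_Th2-2} then produces, for every admissible $\theta\in[0,1]$, $s\in[1,+\infty]$, $\mu\in\mathbb R$, a $\tfrac1{p-1}$-H\"{o}lderian mapping
\[
\mathcal T\colon\ \bigl(L^1,L^{(p^*)'}\bigr)_{\theta,s;\mu}\ \longrightarrow\ \Bigl[\bigl(L^{n'(p-1),\infty},L^p\bigr)_{\theta,\,s(p-1);\,\mu/(p-1)}\Bigr]^n,
\]
since $s/\alpha=s(p-1)$ and $\mu\alpha=\mu/(p-1)$.

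Next I choose $\theta$. The first Lorentz index generated on the datum side is $p_\theta$ with $\tfrac1{p_\theta}=(1-\theta)+\tfrac\theta{(p^*)'}=1-\tfrac\theta{p^*}$; imposing $p_\theta=k$ gives exactly $\theta=p^*\bigl(1-\tfrac1k\bigr)$, and then $1<k<(p^*)'\iff0<\theta<1$, $k=1\iff\theta=0$, $k=(p^*)'\iff\theta=1$. Using $\tfrac1{p^*}=\tfrac{n-p}{np}$, a one-line computation shows that the first index $P$ generated on the gradient side, $\tfrac1P=\tfrac{1-\theta}{n'(p-1)}+\tfrac\theta p$, equals $k^*(p-1)$ with $k^*=\tfrac{nk}{n-k}$, because both sides equal $\tfrac1{n'(p-1)}-\tfrac{\theta(n-p)}{np(p-1)}$.

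For \emph{Case~(1)} ($0<\theta<1$) I take $s=r$, $\mu=0$: by \eqref{identificationLorentz} (equivalently part~\textbf{(1.)} of Theorem~\ref{Th identifications} with logarithmic exponent $0$), $\bigl(L^1,L^{(p^*)'}\bigr)_{\theta,r}=L^{k,r}$ and $\bigl(L^{n'(p-1),\infty},L^p\bigr)_{\theta,r(p-1)}=L^{k^*(p-1),\,r(p-1)}$, which is the asserted estimate (for $r=\infty$ one uses the classical real-interpolation identification, still valid with outer index $\infty$). For \emph{Case~(2)} ($\theta=0$, $k=1$) I take $s=r$, $\mu=0$: \eqref{L1LpGGamma} gives $\bigl(L^1,L^{(p^*)'}\bigr)_{0,r;0}=G\Gamma(1,r;t^{-1})$, and part~\textbf{(3.)} of Theorem~\ref{Th identifications} (with $p_0=n'(p-1)$, logarithmic exponent $0$) gives $\bigl(L^{n'(p-1),\infty},L^p\bigr)_{0,r(p-1);0}=G\Gamma\bigl(\infty,r(p-1);t^{-1},t^{1/(n'(p-1))}\bigr)$. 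For \emph{Case~(3)} ($\theta=1$, $k=(p^*)'$), I am given $\gamma,\beta,\gamma_1,\beta_1$ with $\gamma>-1$, $\gamma+\beta\tfrac r{(p^*)'}+1<0$, $\gamma_1>-1$, $\gamma_1+\tfrac{\beta_1 r(p-1)}{p}+1<0$ and \eqref{condition exponents}; I set $\mu:=\tfrac\gamma r+\tfrac\beta{(p^*)'}$, so that $\mu<-\tfrac1r$ (admissible for $\theta=1$) and $\mu\alpha=\tfrac\mu{p-1}<-\tfrac1{r(p-1)}$. By part~\textbf{(4.)} of Theorem~\ref{Th identifications} in the sub-case $q_1=p_1=(p^*)'<\infty$, $\bigl(L^1,L^{(p^*)'}\bigr)_{1,r;\mu}=G\Gamma\bigl((p^*)',r;v,w\bigr)$ with $v(t)=t^{-1}(1-\log t)^\gamma$, $w(t)=(1-\log t)^\beta$; applying the same part in the sub-case $q_1=p_1=p<\infty$ (with $p_0=n'(p-1)$, $q_0=\infty$, logarithmic exponent $\tfrac\mu{p-1}$) gives $G\Gamma\bigl(p,r(p-1);v_1,w_1\bigr)$, $v_1(t)=t^{-1}(1-\log t)^{\gamma_1}$, $w_1(t)=(1-\log t)^{\beta_1}$, provided $\tfrac\mu{p-1}=\tfrac{\gamma_1}{r(p-1)}+\tfrac{\beta_1}{p}$; rewriting this as $\mu=\tfrac{\gamma_1}{r}+\tfrac{\beta_1(p-1)}{p}$ and comparing with $\mu=\tfrac\gamma r+\tfrac\beta{(p^*)'}$ is precisely \eqref{condition exponents}. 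This gives~(3) and ends the proof.

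The only genuine labour is the parameter bookkeeping: keeping track of how the H\"{o}lder exponent $\alpha=\tfrac1{p-1}$ rescales the secondary index ($s\mapsto s(p-1)$) and the logarithmic exponent ($\mu\mapsto\mu/(p-1)$) on the gradient couple, confirming the admissibility of $\theta=1$ with the induced $\mu$ in Case~(3), and verifying the two index identities $\theta=p^*(1-\tfrac1k)$ and $P=k^*(p-1)$. All PDE content — strong coercivity of the $p$-Laplacian, the Poincar\'{e}--Sobolev inequality, and the a priori estimates behind Theorem~\ref{thexistenceentropic} — has already been spent in Corollary~\ref{cor_extensionT} and Theorem~\ref{ThTholerianp*'p}, so no further analysis of \eqref{Eq-datumL1} is required here.
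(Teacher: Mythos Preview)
Your proof is correct and follows essentially the same approach as the paper: you use Corollary~\ref{cor_extensionT} and Theorem~\ref{ThTholerianp*'p} to get the two endpoint $\tfrac1{p-1}$-H\"older estimates, apply Theorem~\ref{AFFGHR_Th2-2} to the couple $\bigl(L^1,L^{(p^*)'}\bigr)\to\bigl(L^{n'(p-1),\infty},L^p\bigr)$, and then identify the resulting interpolation spaces in the three cases via \eqref{identificationLorentz}, \eqref{L1LpGGamma}/\eqref{LorentzLebesguetheta0}, and \eqref{Ggammatheta1} exactly as the paper does. Your write-up is in fact slightly more explicit about the density hypothesis in Theorem~\ref{AFFGHR_Th2-2}, the vector-valued interpolation, and the admissibility of $\mu$ at $\theta=1$, but the argument is the same.
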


\vspace{1mm}

\begin{proof}

$$
{\mathcal T}: L^1(\Omega) \to [L^{n'(p-1),\infty}(\Omega)]^n \  \ \hbox{is}  \ \ \frac{1}{p-1}\hbox{-H\"olderian} \ \ \hbox{(by Corollary \ref{cor_extensionT})}
$$

$$
{\mathcal T}:L^{(p^*)'}(\Omega)\to [L^p(\Omega)]^n \  \ \hbox{is}  \ \ \frac{1}{p-1}\hbox{-H\"olderian} \ \ \hbox{(by Theorem \ref{ThTholerianp*'p})}.
$$

%
%

\vspace{3mm}

It is known from \eqref{identificationLorentz} that, for $0< \theta<1 $,
$$L^{k,r}(\Omega)=(L^1,L^{(p^*)'})_{\theta,r} \, , \ \ \ \ \theta=p^*\Big(1-\dfrac1k\Big)$$
since $\frac{1}{k}=1-\theta +\frac{\theta}{(p^*)'} \ \ \Rightarrow \ \ \frac{1}{k}=1-\theta +\theta\left(1-\frac{1}{p^*}\right) \ \ \Rightarrow \ \ 1-\frac{1}{k}= \frac{\theta}{p^*}$

\vspace{1mm}

\noindent and $0<\theta<1 \ \ \Rightarrow \ \ 1<k<(p*)'  $.

\vspace{2mm}

Moreover, again from \eqref{identificationLorentz}, we have
$$
L^{k^*(p-1),r(p-1)}=\left(L^{n'(p-1),\infty},L^p\right)_{\theta,r(p-1)},
$$
in fact $\frac{1}{k^*(p-1)}=\frac{1-\theta}{n'(p-1)}+\frac{\theta}{p}$, with $k^*=\dfrac{nk}{n-k}$ since $k<(p^*)'=\frac{np}{np-n+p}<n$.

It is easy to see that
\begin{eqnarray*}
 \frac{1-\theta}{n'(p-1)}+\frac{\theta}{p} &=& \frac{n-1}{n(p-1)}+\theta\left(\frac{1}{p}-\frac{n-1}{n(p-1)}\right)\\
&=& \frac{n-1}{n(p-1)}+ \theta\left(\frac{np-n-np+p}{np(p-1)}\right)\\
&=&  \frac{n-1}{n(p-1)}-  \frac{\theta(n-p)}{np(p-1)}= \frac{np-p-\left(1-\frac{1}{k}\right)\frac{np}{n-p}(n-p)}{np(p-1)}\\
&=& \frac{n-k}{nk}\cdot \frac{1}{p-1}=\frac{1}{k^*(p-1)}.
\end{eqnarray*}
Then, for $f_1,\ f_2$ in $L^{k,r}(\Omega)$, from Theorem \ref{AFFGHR_Th2-2} with $\lambda=0$ and $\alpha=\frac{1}{p-1}$, $L^{(p^*)'}\subset L^1$, \ $L^p\subset L^{n'(p-1),\infty}$, we have
$$
{\mathcal T}: (L^1,L^{(p^*)'})_{\theta,r} \to \left[ \left(L^{n'(p-1),\infty},L^p\right)_{\theta,r(p-1)}\right]^n
$$
is also $\frac{1}{p-1}$-H\"olderian, which implies
$$
{\mathcal T} : L^{k,r}(\Omega) \to [L^{k^*(p-1),r(p-1)}(\Omega)]^n
$$
 and
$$||{\mathcal T}f_1-{\mathcal T}f_2||_{\theta,r(p-1)}\lesssim ||f_1-f_2||^{\frac1{p-1}}_{\theta,r},$$
which yields
$$||{\mathcal T}f_1-{\mathcal T}f_2||_{L^{k^*(p-1),r(p-1)}(\Omega)}\lesssim ||f_1-f_2||^{\frac1{p-1}}_{L^{k,r}(\Omega)},$$
since the bounded functions are dense in $L^{k,r}(\Omega)$. 

\vspace{2mm}

For $\theta =0$ we have from Theorem \ref{AFFGHR_Th2-2}, with $\lambda=0$ and $\alpha=\frac{1}{p-1}$,
$$
{\mathcal T}: (L^1,L^{(p^*)'})_{0,r} \to \left[ \left(L^{n'(p-1),\infty},L^p\right)_{0,r(p-1)}\right]^n
$$
and the assertion follows by \eqref{L1LpGGamma} and \eqref{LorentzLebesguetheta0} since
$$(L^1,L^{(p^*)'})_{0,r}=G\Gamma(1,r;t^{-1}),$$ $$\left(L^{n'(p-1),\infty},L^p\right)_{0,r(p-1)}=G\Gamma(\infty,r(p-1);t^{-1},t^{\frac{1}{n'(p-1)}}).$$

\vspace{2mm}

For $\theta =1$ we have from Theorem \ref{AFFGHR_Th2-2}, with $\lambda<-\frac{1}{r}$ and  $\alpha=\frac{1}{p-1}$,
$$
{\mathcal T}: (L^1,L^{(p^*)'})_{1,r;\lambda} \to \left[ \left(L^{n'(p-1),\infty},L^p\right)_{1,r(p-1);\frac{\lambda}{p-1}}\right]^n
$$
and the assertion follows by \eqref{Ggammatheta1} since
$$(L^1,L^{(p^*)'})_{1,r;\lambda}=G\Gamma((p^*)',r;t^{-1}(1-\log t)^\gamma,(1-\log t)^\beta),$$
$$\left(L^{n'(p-1),\infty},L^p\right)_{1,r(p-1);\frac{\lambda}{p-1}}=G\Gamma(p,r(p-1);t^{-1}(1-\log t)^{\gamma_1},(1-\log t)^{\beta_1}),$$
where

$\lambda=\dfrac{\gamma}{r}+\dfrac{\beta}{(p^*)'}$, \ \ $\gamma>-1$, $\beta\in\mathbb R$, \ $\gamma +\beta \dfrac{r}{(p^*)'}+1<0$, so that $\lambda<-1/r$,

$\frac {\lambda}{p-1}=\dfrac{\gamma_1}{r(p-1)}+\dfrac{\beta_1}{p}$, \ \ $\gamma_1>-1$, $\beta_1\in\mathbb R$, \ $\gamma_1 +\beta_1 \dfrac{r(p-1)}{p}+1<0$, so that $\frac {\lambda}{p-1}<-\frac{1}{r(p-1)}$, i.e. $\lambda<-1/r$,

with the condition \eqref{condition exponents}:   
$$
\frac{\gamma}{r}+\frac{\beta}{(p^*)'}=\frac{\gamma_1}{r}+\frac{\beta_1(p-1)}{p}, \ \ \ \ \hbox{with} \ \  (p^*)'=\frac{np}{np-n+p}.
$$

\end{proof}

\vspace{1mm}

\begin{remark}
 In the case $k=r\in]1,(p^*)'[$ we improve previous known results, in fact the usual estimate was only obtained in $[L^{r^*(p-1)}(\Omega)]^n$ (see \cite{Cianchi-Mazja_Arch}) and
$L^{r^*(p-1),r(p-1)}(\Omega)\subset L^{r^*(p-1)}(\Omega)$.
\end{remark}

\vspace{1mm}

\begin{remark}
If $p=\frac{2n}{n+1}$, we have $p<n$ and $(p^*)'= \frac{np}{np-n+p}=p$. Therefore, in this particular case, the condition \eqref{condition exponents} is certainly satisfied if $\gamma=\gamma_1$ and $\beta=\beta_1(p-1)$.
\end{remark}

\vspace{2mm}

The identification of interpolation spaces between couples of Lebesgue or Lorentz spaces,
recovering spaces such as Lorentz–Zygmund spaces or $G\Gamma$ spaces, permit us to
obtain precise regularity of the gradient of an entropic-renormalized solution.

\begin{theorem}\label{Th_regularity_gradient}

Let  $2\leq p<n$. Assume (H1) and (H2) and let $u$ the unique entropic-renormalized solution of the Dirichlet problem \eqref{Eq-datumL1}.

Let ${\mathcal T}$ the mapping $f\mapsto {\mathcal T} f$, with
$${\mathcal T}f=\nabla u.$$

\begin{description}[before={\renewcommand\makelabel[1]{\bfseries ##1}}]

\item [(1)] \ Let $0<\theta<1$, \ $1\leq q<\infty, \ \lambda\in\mathbb R$, \ $\displaystyle\frac{1}{p_\theta}=\frac{(1-\theta)}{n'(p-1)}+\frac{\theta} {p} $, \ $n'=\displaystyle \frac{n}{n-1}$.
\end{description}


\vspace{2mm}
Then
$$
\displaystyle{\mathcal T}: L^{\frac{p^*}{p^*-\theta},q}(\log L)^\lambda \to \left[L^{p_\theta,q(p-1)}(\log L)^{\frac\lambda{p-1}}\right]^n
$$
is $\frac {1}{p-1}$-H\"olderian, where $p^*=\frac{np}{n-p}$.

\vspace{5mm}

\begin{description}[before={\renewcommand\makelabel[1]{\bfseries ##1}}]
\item [(2)] \ Let $\theta=0$,  \ $\lambda\geq -\frac{1}{p}$. Then
\end{description}
$$
{\mathcal T}: G\Gamma(1,p;t^{-1}(1-\log t)^{\lambda p})\to (L^{n'(p-1),\infty}(\Omega), L^p(\Omega))_{0,p(p-1);\frac{\lambda}{p-1}}
$$

\vspace{2mm}
is  $\frac{1}{p-1}-$H\"olderian. Moreover, observing that
$$
(L^{n'(p-1),\infty};L^p)_{0,p(p-1);\frac{\lambda}{p-1}}=G\Gamma(\infty,p(p-1);t^{-1}(1-\log t)^{\lambda p},t^{\frac{1}{n'(p-1)}}),
$$
an equivalent norm of $\nabla u$, for $\sigma$ such that $\frac{1}{\sigma}=\frac{p'}{pn'}-\frac{1}{p}$, is given by
\begin{eqnarray*}
& & \|\nabla u\|_{(L^{n'(p-1),\infty}(\Omega), L^p(\Omega))_{0,p(p-1);\frac{\lambda}{p-1}}} \\
& & \approx  \left[ \int_0^1 \left( \sup_{0<s<t^\sigma} s^{\frac{p'}{pn'}}|\nabla u|_*(s)(1-\log t)^{\frac{\lambda}{p-1}}\right)^{p(p-1)}\frac{dt}{t}\right]^{\frac{1}{p(p-1)}}.
\end{eqnarray*}

\vspace{5mm}

\begin{description}[before={\renewcommand\makelabel[1]{\bfseries ##1}}]
\item [(3)] \ Let $\theta=1$, \ $\lambda < -\frac{1}{p}$. Then
 $$
{\mathcal T}: (L^{1}(\Omega), L^{(p^*)'}(\Omega))_{1,p;\lambda}\to (L^{n'(p-1),\infty}(\Omega), L^p(\Omega))_{1,p(p-1);\frac{\lambda}{p-1}}
$$
 is $\frac{1}{p-1}-$H\"olderian and we have
 \begin{eqnarray*}
& & \left[ \int_0^1\left(\left(\int_t^1 |\nabla u|_\ast(s)^{p}ds\right)^{\frac{1}{p}}
(1-\log t)^{\frac{\lambda}{p-1}}\right)^{p(p-1)} \frac{dt}{t}\right]^{\frac{1}{p(p-1)}}\\
& \leq & c \left[ \int_0^1\left(\left(\int_t^1 f_\ast(s)^{(p^*)'}ds\right)^{\frac{1}{(p^*)'}}(1-\log t)^{\lambda }\right)^p \frac{dt}{t}\right]^{\frac{1}{p}}
\end{eqnarray*}

\end{description}

\end{theorem}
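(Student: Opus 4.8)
The plan is to obtain all three parts by a single application of the interpolation theorem for H\"olderian mappings (Theorem \ref{AFFGHR_Th2-2}) to the two endpoint maps already established, followed by a case-by-case identification of the resulting abstract interpolation spaces via Theorem \ref{Th identifications}.

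First I would set up the two couples. Corollary \ref{cor_extensionT} gives that the extended operator ${\mathcal T}$ is $\tfrac1{p-1}$-H\"olderian from $L^1(\Omega)$ to $[L^{n'(p-1),\infty}(\Omega)]^n$, and Theorem \ref{ThTholerianp*'p} gives that the same ${\mathcal T}$ is $\tfrac1{p-1}$-H\"olderian from $L^{(p^*)'}(\Omega)$ to $[L^p(\Omega)]^n$. Since $|\Omega|<\infty$ and $(p^*)'>1$ we have $L^{(p^*)'}(\Omega)\subset L^1(\Omega)$ densely ($C_0^\infty(\Omega)$, or the bounded functions, being dense in $L^1$), and since $n'<p'$, hence $n'(p-1)<p$, we have $L^p(\Omega)\subset L^{n'(p-1),\infty}(\Omega)$. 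Thus $(X_0,X_1)=(L^1,L^{(p^*)'})$ and $(Y_0,Y_1)=(L^{n'(p-1),\infty},L^p)$ satisfy the hypotheses of Theorem \ref{AFFGHR_Th2-2} with $\alpha=\tfrac1{p-1}$ and $\lambda$ arbitrary. Applying it with interpolation parameter $q$ and using $q/\alpha=q(p-1)$, $\lambda\alpha=\tfrac{\lambda}{p-1}$, we obtain that ${\mathcal T}$ is $\tfrac1{p-1}$-H\"olderian from $(L^1,L^{(p^*)'})_{\theta,q;\lambda}$ into $[(L^{n'(p-1),\infty},L^p)_{\theta,q(p-1);\frac{\lambda}{p-1}}]^n$ for every admissible $\theta\in[0,1]$; in parts (2) and (3) one simply specializes $q=p$. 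It then remains to name the spaces.

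For part (1) ($0<\theta<1$): by \eqref{identification_L_L_LZ_3_parameters} with $m=(p^*)'$ (so $m'=p^*$) the source is $L^{p_{\mathrm s},q}(\log L)^\lambda$ with $\theta=p^*(1-1/p_{\mathrm s})$, i.e. $p_{\mathrm s}=\tfrac{p^*}{p^*-\theta}$; by \eqref{identification_LrinftyLm} with $r=n'(p-1)$, $m=p$ the target is $L^{p_\theta,q(p-1)}(\log L)^{\frac{\lambda}{p-1}}$ with $\tfrac1{p_\theta}=\tfrac{1-\theta}{n'(p-1)}+\tfrac\theta p$; the admissibility ranges $1<p_{\mathrm s}<(p^*)'$ and $n'(p-1)<p_\theta<p$ hold since $0<\theta<1$ and $p\ge2$. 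For part (2) ($\theta=0$, $\lambda\ge-1/p$): \eqref{L1LpGGamma} (case (2.) of Theorem \ref{Th identifications} with $p_0=q_0=1$) identifies the source with $G\Gamma(1,p;t^{-1}(1-\log t)^{\lambda p})$, and case (3.) of Theorem \ref{Th identifications} (with $p_0=n'(p-1)$, $q_0=\infty$) identifies the target with $G\Gamma(\infty,p(p-1);t^{-1}(1-\log t)^{\lambda p},t^{1/(n'(p-1))})$. For the explicit equivalent norm in part (2), I would insert Holmstedt's formula $K(f,t;L^{n'(p-1),\infty},L^p)\simeq \sup_{0<s<t^\sigma}s^{1/(n'(p-1))}f_*(s)+t\bigl(\int_{t^\sigma}^1 f_*^p\bigr)^{1/p}$, with $\tfrac1\sigma=\tfrac1{n'(p-1)}-\tfrac1p=\tfrac{p'}{pn'}-\tfrac1p$, into the definition of $\|\cdot\|_{0,p(p-1);\frac{\lambda}{p-1}}$: the first term produces exactly the displayed ess-sup integral (after absorbing the logarithmic weight into the power), while the $L^p$-tail term is finite and of lower order on the bounded domain. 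For part (3) ($\theta=1$, $\lambda<-1/p$): case (4.) of Theorem \ref{Th identifications} (with $q_1=p_1$) applied to the source couple, writing $\lambda=\gamma/p+\beta/(p^*)'$ with $\gamma+\beta p/(p^*)'+1<0$, gives $\|f\|\simeq\bigl(\int_0^1(\int_t^1 f_*^{(p^*)'})^{p/(p^*)'}(1-\log t)^{\lambda p}\tfrac{dt}{t}\bigr)^{1/p}$, and the same case applied to the target couple gives $\|\nabla u\|\simeq\bigl(\int_0^1(\int_t^1 |\nabla u|_*^{p})^{p-1}(1-\log t)^{\lambda p}\tfrac{dt}{t}\bigr)^{1/(p(p-1))}$; rewriting the powers $p$ and $p(p-1)$ in the nested form of the statement and feeding in the H\"older inequality of Theorem \ref{AFFGHR_Th2-2} yields the displayed estimate.

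I expect the routine part to be the exponent bookkeeping, all identities being of the forms $\tfrac1{p_\theta}=(1-\theta)/r+\theta/m$, $q/\alpha=q(p-1)$, $\lambda\alpha=\tfrac{\lambda}{p-1}$; the only genuine friction should be in part (2), namely matching the precise splitting point $t^\sigma$ in Holmstedt's estimate for the Lorentz couple and checking that the $L^p$-tail contribution does not affect the norm equivalence.
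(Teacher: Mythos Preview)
Your proposal is correct and follows essentially the same route as the paper: set up the endpoint H\"older estimates from Corollary \ref{cor_extensionT} and Theorem \ref{ThTholerianp*'p}, apply Theorem \ref{AFFGHR_Th2-2} with $\alpha=\tfrac1{p-1}$, and then identify the abstract interpolation spaces via the appropriate cases of Theorem \ref{Th identifications}. The only difference is that you supply the Holmstedt-formula argument for the explicit equivalent norm in part~(2), which the paper simply states without justification; your treatment of that point is correct and more informative.
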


\vspace{3mm}

\begin{proof}

Let $0<\theta<1$.
$$ {\mathcal T}: L^1(\Omega)\longrightarrow [L^{n'(p-1),\infty}(\Omega)]^n \ \ \hbox{is} \ \ \frac{1}{p-1}-\hbox{H\"olderian} \ \ \ \hbox{(by Corollary \ref{cor_extensionT})}
$$
$$
{\mathcal T}:L^{(p^*)'}(\Omega)\to [L^p(\Omega)]^n \  \ \hbox{is}  \ \ \frac{1}{p-1}\hbox{-H\"olderian} \ \ \hbox{(by Theorem \ref{ThTholerianp*'p})}
$$

\vspace{2mm}

\noindent The smooth functions are dense in the Lorentz-Zygmund spaces $L^{p,q}(\log L)^\lambda$\!,  $1\leq p,q<\infty$. Then
$$
{\mathcal T}: (L^1(\Omega),L^{(p^*)'}(\Omega))_{\theta,q;\lambda} \to (L^{n'(p-1),\infty}(\Omega),L^{p}(\Omega))_{\theta,\frac{q}{\alpha};\lambda \alpha}
$$
is $\alpha=\dfrac{1}{p-1}$- H\"olderian.

 \vspace{3mm}

 Moreover, we identify
$$
(L^1(\Omega),L^{(p^*)'}(\Omega))_{\theta,q;\lambda}=L^{\frac{p^*}{p^*-\theta},q}(\log L)^\lambda=L^{\frac{np}{np-\theta(n-p)},q}(\log L)^\lambda \ \ \ \ \hbox{(by \eqref{identification_L_L_LZ_3_parameters})},
$$

$$
(L^{n'(p-1),\infty};L^p)_{\theta,\frac{q}{\alpha};\lambda \alpha}=L^{p_\theta,\frac{q}{\alpha}}(\log L)^{\lambda\alpha}, \ \ \ \frac1{p_\theta}=\frac{1-\theta}{n'(p-1)}+\frac\theta p \ \ \ \ \hbox{(by \eqref{identification_LrinftyLm})}.
$$
Then, by Theorem \ref{AFFGHR_Th2-2} with $\alpha=\frac{1}{p-1}$, we get
$$
{\mathcal T}: L^{\frac{p^*}{p^*-\theta},q}(\log L)^\lambda \to \left[L^{p_\theta,q(p-1)}(\log L)^{\frac\lambda{p-1}}\right]^n \ \ \hbox{is} \ \ \frac{1}{p-1}\hbox{-H\"olderian}.
$$

\vspace{1mm}

In the case $\theta=0$,  \ $\lambda\geq -\frac{1}{p}$, by \eqref{L1LpGGamma}, we have
$$
(L^1(\Omega),L^{(p^*)'}(\Omega))_{0,p;\lambda}=G\Gamma(1,p;t^{-1}(1-\log t)^{\lambda p})
$$
and $L^{(p^*)'}$ is dense therein. By \eqref{LorentzLebesguetheta0},
$$
(L^{n'(p-1),\infty};L^p)_{0,p(p-1);\frac{\lambda}{p-1}}=G\Gamma(\infty,p(p-1);t^{-1}(1-\log t)^{\lambda p},t^{\frac{1}{n'(p-1)}}).
$$

\vspace{1mm}
Therefore, by Theorem \ref{AFFGHR_Th2-2}, 
with $\alpha=\frac{1}{p-1}$, the mapping
$$
{\mathcal T}: (L^1(\Omega),L^{(p^*)'}(\Omega))_{0,p;\lambda} \to   (L^{n'(p-1),\infty};L^p)_{0,p(p-1);\frac{\lambda}{p-1}}
$$
is $\dfrac{1}{p-1}$- H\"olderian. By the identification of the above interpolation spaces, we have
$$
{\mathcal T}: G\Gamma(1,p;t^{-1}(1-\log t)^{\lambda p}) \to  G\Gamma(\infty,p(p-1);t^{-1}(1-\log t)^{\lambda p},t^{\frac{1}{n'(p-1)}})
$$
and the assertion follows.

\vspace{2mm}

The same argument holds for $\theta=1$.
\end{proof}

\begin{remark}
We observe that, in the case $0<\theta<1$, for $\lambda=0$, we recover the conclusion of Theorem \ref{thgradientLorentz}, in fact $k=\frac{p^*}{p^*-\theta}$ and $\frac{1}{k^*(p-1)}=\frac{1}{p_\theta}$, so in this case, for $q=r$, we have
$$
L^{\frac{p^*}{p^*-\theta},q}(\log L)^\lambda= L^{k,r}, \ \ \ L^{p_\theta,q(p-1)}(\log L)^{\frac\lambda{p-1}}=L^{k^*(p-1),r(p-1)},
$$
 while, for $\lambda\neq 0$, we have
$$L^{\frac{p^*}{p^*-\theta},q}(\log L)^\lambda\subset L^{k,r}, \ \ \ L^{p_\theta,q(p-1)}(\log L)^{\frac\lambda{p-1}}\subset L^{k^*(p-1),r(p-1)} \ \ \hbox{if} \ \lambda>0,  $$
$$ L^{k,r}\subset L^{\frac{p^*}{p^*-\theta},q}(\log L)^\lambda , \ \ \ L^{k^*(p-1),r(p-1)}\subset L^{p_\theta,q(p-1)}(\log L)^{\frac\lambda{p-1}} \ \ \hbox{if} \ \lambda<0
$$
(see inclusion relations in Section \ref{Sec function spaces}).

\end{remark}

\vspace{2mm}

\begin{remark}
We observe that, by \eqref{Ggammatheta1},
$$
(L^1,L^{(p^*)'})_{1,p;\lambda}=G\Gamma((p^*)',p;t^{-1}(1-\log t)^{\gamma_1},(1-\log t)^{\beta_1}),
$$
$$
(L^{n'(p-1),\infty}, L^p)_{1,p(p-1);\frac{\lambda}{p-1}}=G\Gamma(p,p(p-1);t^{-1}(1-\log t)^{\gamma_2},(1-\log t)^{\beta_2})
$$
where

\vspace{1mm}
$\lambda=\frac{\gamma_1}{p}+\frac{\beta_1}{(p^*)'}$,\  $\gamma_1>-1$, \ $\beta_1\in \mathbb R$, \
$\gamma_1+\beta_1\frac{p}{(p^*)'}+1<0$, \ $\lambda<-\frac{1}{p}$,

\vspace{1mm}
$\lambda=\frac{\gamma_2}{p}+\frac{\beta_2(p-1)}{p}$,\  $\gamma_2>-1$,\  $\beta_2\in \mathbb R$, \ $\gamma_2+\beta_2(p-1)+1<0$,\  $\lambda<-\frac{1}{p}$,

$$\gamma_1-\gamma_2+(\beta_1-\beta_2)(p-1)+\beta_1\frac{p}{n}=0.$$
The last condition follows by the equality
$$
\frac{\gamma_1}{p}+\frac{\beta_1}{(p^*)'}=\frac{\gamma_2}{p}+\frac{\beta_2(p-1)}{p}, \ \ \ \ \hbox{with} \ \  (p^*)'=\frac{np}{np-n+p}.
$$

\end{remark}

\vspace{5mm}

To obtain boundedness of the solution in a more general situation, stated in next Theorem, we need to assume the growth condition (H3).

\begin{theorem}\label{Th_boundednessH3}

Let  $2\leq p<n$. Assume (H1), (H2) and (H3) and let $u$ the entropic-renormalized solution of the Dirichlet problem \eqref{Eq-datumL1}.
%

Let ${\mathcal T}$ the mapping $f\mapsto {\mathcal T} f$, with
$${\mathcal T}f=\nabla u.$$

\vspace{2mm}
Let \ $1<q<+\infty$, $\lambda\in \mathbb R$, \ $0\leq \theta <1$.

\vspace{2mm}


\begin{itemize}

\item If $\displaystyle f\in L^{\frac{n'}{n'-\theta},q}(\log L)^\lambda $, \ $0<\theta<1$, then

$$  \nabla u \in L^{\frac{n(p-1)}{(1-\theta)(n-1)},q(p-1)}(\log L)^{\frac{\lambda}{p-1}}.$$

\vspace{2mm}

\item If $\displaystyle f\in G\Gamma(1,q;t^{-1}(1-\log t)^{\lambda q}) $, then

$$
\nabla u \in  G\Gamma(\infty,q(p-1);t^{-1}(1-\log t)^{\lambda q},t^{\frac{1}{n'(p-1)}}).
$$

\end{itemize}

\end{theorem}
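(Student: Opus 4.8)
The plan is to view the gradient operator $\mathcal{T}$ as a mapping between the two endpoint couples $\big(L^1(\Omega),\,L^{n,1}(\Omega)\big)$ and $\big([L^{n'(p-1),\infty}(\Omega)]^n,\,[L^\infty(\Omega)]^n\big)$, to apply the nonlinear interpolation Theorem~\ref{AFFGHR_Th2-1}, and then to identify both the source and the target interpolation spaces via Theorem~\ref{Th identifications}.

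First I would record the two endpoint properties. On the $L^1$ side, Corollary~\ref{cor_extensionT} states that $\mathcal{T}\colon L^1(\Omega)\to[L^{n'(p-1),\infty}(\Omega)]^n$ is $\tfrac1{p-1}$-H\"olderian, which is exactly condition \eqref{1mapT} with $\alpha=\tfrac1{p-1}$ and $f$ a constant. On the other side, since $2\le p$ forces $L^{n,1}(\Omega)\subset L^{p'}(\Omega)$, for $f\in L^{n,1}(\Omega)$ the entropic-renormalized solution coincides with the weak one (Remark~\ref{entropic-weak}) and the bound \eqref{gradientbounded} of Proposition~\ref{prop_u_bounded} (itself coming from the Cianchi--Maz'ya estimate together with \eqref{estimateV_f}) gives
$$
\|\mathcal{T}f\|_{L^\infty}\le c\,\big(1+\|f\|_{L^1}^{\frac{m_1+1-p}{p-1}}\big)\,\|f\|_{L^{n,1}}^{\frac1{p-1}},
$$
which is precisely condition \eqref{2mapT} with $X_1=L^{n,1}$, $Y_1=[L^\infty]^n$, exponent $\beta=\tfrac1{p-1}$ and the continuous increasing function $g(\sigma)=c\,(1+\sigma^{(m_1+1-p)/(p-1)})$ (note $m_1\ge p-1$, so this exponent is $\ge 0$). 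On a set of finite measure $L^{n,1}\subset L^1$ and $[L^\infty]^n\subset[L^{n'(p-1),\infty}]^n$, so the couple inclusions hold, and $\alpha=\beta=\tfrac1{p-1}$, hence $\alpha\le\beta$.

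Next I would invoke Theorem~\ref{AFFGHR_Th2-1} with this data and the parameters $\theta\in[0,1)$, $q$, $\lambda$ of the statement: it yields that $\mathcal{T}$ maps $(L^1,L^{n,1})_{\theta,q;\lambda}$ into $\big[(L^{n'(p-1),\infty},L^\infty)_{\theta,q(p-1);\frac{\lambda}{p-1}}\big]^n$ --- the first index being $\theta\tfrac\alpha\beta=\theta$, the second $\tfrac q\alpha=q(p-1)$, the logarithmic exponent $\lambda\alpha=\tfrac{\lambda}{p-1}$. It then remains to identify the two families. For $0<\theta<1$, case~(1) of Theorem~\ref{Th identifications} (i.e.\ \eqref{LZinterpolationLorentz}) gives $(L^1,L^{n,1})_{\theta,q;\lambda}=L^{p_\theta,q}(\log L)^\lambda$ with $\tfrac1{p_\theta}=(1-\theta)+\tfrac\theta n=1-\tfrac\theta{n'}$, i.e.\ $p_\theta=\tfrac{n'}{n'-\theta}$, while \eqref{identification_LrinftyLinfty} gives $(L^{n'(p-1),\infty},L^\infty)_{\theta,q(p-1);\frac\lambda{p-1}}=L^{r_\theta,q(p-1)}(\log L)^{\lambda/(p-1)}$ with $\tfrac1{r_\theta}=\tfrac{1-\theta}{n'(p-1)}$, i.e.\ $r_\theta=\tfrac{n'(p-1)}{1-\theta}=\tfrac{n(p-1)}{(1-\theta)(n-1)}$; this is the first bullet. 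For $\theta=0$, case~(2) of Theorem~\ref{Th identifications} with $p_0=q_0=1$ (so $w_2\equiv1$) identifies the source as $G\Gamma(1,q;t^{-1}(1-\log t)^{\lambda q})$, and case~(3) (equivalently \eqref{LorentzLebesguetheta0}) with $p_0=n'(p-1)$ identifies the target as $G\Gamma(\infty,q(p-1);t^{-1}(1-\log t)^{\lambda q},t^{1/(n'(p-1))})$, which is the second bullet.

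The only delicate part is the index bookkeeping --- verifying $p_\theta=\tfrac{n'}{n'-\theta}$, $r_\theta=\tfrac{n(p-1)}{(1-\theta)(n-1)}$ and that the $G\Gamma$ weights come out with the stated exponents (using $\lambda\alpha\cdot q(p-1)=\lambda q$ for the logarithmic factor) --- together with the admissibility ranges of the logarithmic interpolation spaces: for $0<\theta<1$ every $\lambda\in\mathbb R$ is allowed, whereas the $\theta=0$ identifications implicitly require $\lambda\ge-\tfrac1q$. Since the two substantive inputs, the $\tfrac1{p-1}$-H\"older continuity on $L^1$ and the gradient $L^\infty$-bound under (H3), are already available, no further obstacle remains.
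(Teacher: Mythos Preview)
Your proof is correct and follows essentially the same route as the paper: both use the endpoint pair $\mathcal{T}\colon L^1\to[L^{n'(p-1),\infty}]^n$ (Corollary~\ref{cor_extensionT}) and $\mathcal{T}\colon L^{n,1}\to[L^\infty]^n$ (estimate~\eqref{gradientbounded}), apply the nonlinear interpolation theorem with $\alpha=\beta=\tfrac1{p-1}$, and then identify the resulting interpolation spaces via \eqref{LZinterpolationLorentz}, \eqref{identification_LrinftyLinfty}, \eqref{LorentLorentz_GGamma} and \eqref{LorentzLebesguetheta0}. Your citation of Theorem~\ref{AFFGHR_Th2-1} is in fact the appropriate one, since \eqref{gradientbounded} gives only a boundedness estimate of type~\eqref{2mapT}, not a H\"older condition; the paper's proof writes Theorem~\ref{AFFGHR_Th2-2} but immediately invokes the conclusion $(Y_0,Y_1)_{\theta\alpha/\beta,\,q/\alpha;\,\lambda\alpha}$ of Theorem~\ref{AFFGHR_Th2-1}.
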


\vspace{2mm}

\begin{proof}

$$
{\mathcal T}: L^1(\Omega)\longrightarrow [L^{n'(p-1),\infty}(\Omega)]^n \ \ \hbox{is} \ \  \frac{1}{p-1}-\hbox{H\"olderian} \ \  \hbox{(by Corollary \ref{cor_extensionT})},
$$

$$ {\mathcal T}: L^{n,1}(\Omega)\longrightarrow [L^{\infty}(\Omega)]^n, \ \ \  \hbox{is bounded (by \eqref{gradientbounded} in Proposition \ref{prop_u_bounded})}.$$

\vspace{1mm}

Using Theorem \ref{AFFGHR_Th2-2}, that is
$$
{\mathcal T}: (X_0,X_1)_{\theta,q;\lambda}\to (Y_0,Y_1)_{\theta\frac\alpha\beta,\frac {q}\alpha;\lambda\alpha}
$$
is bounded, for $\beta=\alpha=\frac{1}{p-1}$, we have
$$ {\mathcal T}: (L^1(\Omega),L^{n,1}(\Omega))_{\theta,q;\lambda}\longrightarrow (L^{n'(p-1),\infty}(\Omega),L^{\infty}(\Omega))_{\theta,q(p-1);\frac{\lambda}{p-1}} $$
is bounded.

\vspace{4mm}
If $0<\theta<1$, from \eqref{LZinterpolationLorentz} and \eqref{identification_LrinftyLinfty} respectively, we have
$$
(L^1,L^{n,1})_{\theta,q;\lambda}=L^{\frac{n'}{n'-\theta},q}(\log L)^\lambda
$$
$$(L^{n'(p-1),\infty},L^\infty)_{\theta,q(p-1);\frac{\lambda}{p-1}}=
L^{\frac{n'(p-1)}{1-\theta},q(p-1)}(\log\,L)^{\frac{\lambda}{p-1}}.$$

\vspace{3mm}
If $\theta=0$, by \eqref{LorentLorentz_GGamma} and \eqref{LorentzLebesguetheta0} respectively, we have
$$
(L^1(\Omega),L^{n,1}(\Omega))_{0,q;\lambda}=G\Gamma(1,q;t^{-1}(1-\log t)^{\lambda q}),
$$
$$
(L^{n'(p-1),\infty};L^p)_{0,q(p-1);\frac{\lambda}{p-1}}=G\Gamma(\infty,q(p-1);t^{-1}(1-\log t)^{\lambda q},t^{\frac{1}{n'(p-1)}}).
$$

\end{proof}

\begin{remark}
We recall that, as point out at the beginning of Section \ref{SecApplHolderianpgreater2}, all the previous results are also true for $p\geq n$, and in this case the symbol $p^*$ must be considered as any finite number. Hence, if in (1) of Theorem \ref{Th_regularity_gradient} we consider $p\geq n$ and chose $p^*=n'$, comparing the result with the first one in Theorem \ref{Th_boundednessH3} for $p\geq n$, we have the datum $f$ in the same space $L^{\frac{n'}{n'-\theta},q}(\log L)^\lambda$, while the gradient of the solution $u$ in Theorem \ref{Th_boundednessH3} belongs to a smaller space, since
$$
L^{\frac{n(p-1)}{(1-\theta)(n-1)},q(p-1)}(\log L)^{\frac{\lambda}{p-1}}\subset L^{p_\theta,q(p-1)}(\log L)^{\frac\lambda{p-1}}
$$
with $\frac{1}{p_\theta}=\frac{(1-\theta)}{n'(p-1)}+\frac{\theta} {p}$.

\vspace{1mm}

Moreover, $\frac{n(p-1)}{(1-\theta)(n-1)}\geq\frac{n}{1-\theta}>n$ \ for \ $p\geq n$ and $0<\theta<1$. Therefore $\nabla u\in L^{r}$, \ $r>n$, and by Sobolev theorem, it follows that the solution $u$ is bounded.

\end{remark}

\vspace{2mm}

\section{The H\"olderian mappings for the case \texorpdfstring{$1<p< 2$}{R\texttwosuperior}}\label{SecApplHolderianpsmaller2}

Some of results for $2\leq p<n$ remain true in the case $1 < p < 2$. The fundamental changes concern the H\"older properties than can exist but are not sharp as for the case $p \geq 2$, and the H\"older constant appearing depends on the data.

\begin{theorem} {\rm (local Lipschitz contraction)}\label{ThlocalLipschitz}

Let $1<p<2$, \ $n\geq 2, \  p^*=\frac{np}{n-p}, \ \ (p^*)^{'}=\frac{np}{np+p-n}$.

\vspace{2mm}

 Let $f_1,f_2\in L^{(p^*)^{'}}(\Omega)$ and $V$ satisfies (H1) and (H2).

\vspace{2mm}

Let $u_i, \ i=1,2$, be the \textbf{weak solution} of
$$-\Delta_p u_i+V(x;u_i)=f_i, \quad\quad  u_i=0 \quad {\rm on} \ \partial\Omega . $$

Then
\vspace{1mm}

\begin{itemize}

\item $||\nabla {u_1}||_{L^p}\leq c||f_1||^{\frac{1}{p-1}}_{L^{(p^*)^{'}}}, \ \ \ \  ||\nabla {u_2}||_{L^p}\leq c||f_2||^{\frac{1}{p-1}}_{L^{(p^*)^{'}}}$

\vspace{2mm}

\item $||\nabla (u_1-u_2)||_{L^p}\leq c\Big(||\nabla u_1||_{L^p}+||\nabla u_2||_{L^p}\Big)^{2-p}||f_1-f_2||_{L^{(p^*)^{'}}}$.

\end{itemize}

\vspace{2mm}

Here the constant $c$ depends only on $p$ and $\Omega$.
\end{theorem}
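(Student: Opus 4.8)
The plan is to run the standard energy identities for the weak formulation \eqref{weak sol}, but replacing the strong coercivity inequality \eqref{ineq_coercivity} (which holds only for $p\geq 2$) by its degenerate counterpart valid for $1<p<2$. First observe that $f_i\in L^{(p^*)'}(\Omega)\subset L^1(\Omega)\cap W^{-1,p'}(\Omega)$, so Proposition \ref{defweaksol} applies: each $u_i\in W^{1,p}_0(\Omega)$ is a genuine weak solution, and in particular $u_i$ and $u_1-u_2$ are admissible test functions in \eqref{weak sol}. Since $p<2\leq n$ we have $p<n$, so $p^*=\frac{np}{n-p}$ is finite and the Poincar\'e--Sobolev embedding $W^{1,p}_0(\Omega)\hookrightarrow L^{p^*}(\Omega)$ is available, with constant depending only on $p$ and $\Omega$.

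For the first bullet I would take $u_1$ itself as test function in the equation for $u_1$. Because $V(x;\cdot)$ is nondecreasing with $V(x;0)=0$, one has $V(x;u_1(x))\,u_1(x)\geq 0$ a.e., so that term may be discarded, leaving
$$
\int_\Omega|\nabla u_1|^p\,dx\;\leq\;\int_\Omega f_1\,u_1\,dx\;\leq\;||f_1||_{L^{(p^*)'}}\,||u_1||_{L^{p^*}}\;\leq\;c\,||f_1||_{L^{(p^*)'}}\,||\nabla u_1||_{L^p},
$$
and dividing by $||\nabla u_1||_{L^p}$ gives $||\nabla u_1||_{L^p}\leq c\,||f_1||_{L^{(p^*)'}}^{1/(p-1)}$; the estimate for $u_2$ is identical. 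For the second bullet, set $w=u_1-u_2$, subtract the two equations and test with $w$. Monotonicity of $V$ in the second variable yields $(V(x;u_1)-V(x;u_2))\,w\geq 0$, so
$$
\int_\Omega\big(|\nabla u_1|^{p-2}\nabla u_1-|\nabla u_2|^{p-2}\nabla u_2\big)\cdot\nabla w\,dx\;\leq\;\int_\Omega(f_1-f_2)\,w\,dx\;\leq\;c\,||f_1-f_2||_{L^{(p^*)'}}\,||\nabla w||_{L^p}.
$$
Here I invoke the companion of \eqref{ineq_coercivity} for $1<p<2$: there is $c_p>0$ such that $\big(|\xi|^{p-2}\xi-|\xi'|^{p-2}\xi'\big)\cdot(\xi-\xi')\geq c_p\,|\xi-\xi'|^2(|\xi|+|\xi'|)^{-(2-p)}$ for all $\xi,\xi'\in\mathbb R^n$ (with the quotient interpreted as $0$ where $\xi=\xi'=0$). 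Combining the two preceding displays gives
$$
\int_\Omega\frac{|\nabla w|^2}{(|\nabla u_1|+|\nabla u_2|)^{2-p}}\,dx\;\leq\;c\,||f_1-f_2||_{L^{(p^*)'}}\,||\nabla w||_{L^p}.
$$

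The closing step is the H\"older interpolation that recovers $||\nabla w||_{L^p}$. Writing $|\nabla w|^p$ as the product of $\big(|\nabla w|^2(|\nabla u_1|+|\nabla u_2|)^{-(2-p)}\big)^{p/2}$ and $\big((|\nabla u_1|+|\nabla u_2|)^p\big)^{(2-p)/2}$ and applying H\"older with the conjugate exponents $2/p$ and $2/(2-p)$ yields
$$
||\nabla w||_{L^p}^p\;\leq\;\Big(\int_\Omega\frac{|\nabla w|^2}{(|\nabla u_1|+|\nabla u_2|)^{2-p}}\,dx\Big)^{p/2}\big(||\nabla u_1||_{L^p}+||\nabla u_2||_{L^p}\big)^{p(2-p)/2},
$$
where I also used $a^p+b^p\leq(a+b)^p$. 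Substituting the bound for the first factor and cancelling a factor $||\nabla w||_{L^p}^{p/2}$ leaves $||\nabla w||_{L^p}^{p/2}\lesssim||f_1-f_2||_{L^{(p^*)'}}^{p/2}(||\nabla u_1||_{L^p}+||\nabla u_2||_{L^p})^{p(2-p)/2}$, which is precisely the asserted estimate after raising to the power $2/p$. I expect the only nonroutine ingredient to be the degenerate monotonicity inequality for $1<p<2$ and the attendant care on the set $\{\nabla u_1=\nabla u_2=0\}$ (and the harmless case $\nabla w\equiv 0$ when one cancels); everything else is H\"older's inequality and the Sobolev embedding, with all constants depending only on $p$ and $\Omega$.
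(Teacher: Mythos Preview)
The paper does not include a proof of this theorem; it is one of the results stated without argument (recall the authors' disclaimer that they ``exibit only some proofs, to give an idea of the tools involved''), the full details being deferred to \cite{AFFGER2023}. Your proof is correct and is precisely the standard argument one expects: the energy estimate for each $u_i$ obtained by testing with the solution itself, and for the difference the degenerate monotonicity inequality $(|\xi|^{p-2}\xi-|\xi'|^{p-2}\xi')\cdot(\xi-\xi')\geq c_p|\xi-\xi'|^2(|\xi|+|\xi'|)^{p-2}$ valid for $1<p<2$, followed by the H\"older splitting with exponents $2/p$ and $2/(2-p)$.

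One small point worth tightening: the weak formulation \eqref{weak sol} is stated only for test functions $\varphi\in W^{1,p}_0(\Omega)\cap L^\infty(\Omega)$, so $u_i$ and $u_1-u_2$ are not, as written, admissible. The standard fix is to test with the truncations $T_k(u_i)$ and $T_k(u_1-u_2)$ (which do lie in $W^{1,p}_0\cap L^\infty$), observe that the sign conditions $V(x;u_i)T_k(u_i)\geq 0$ and $(V(x;u_1)-V(x;u_2))T_k(u_1-u_2)\geq 0$ still hold, and let $k\to\infty$ by monotone convergence. Also, the passage $\big(\int(|\nabla u_1|+|\nabla u_2|)^p\big)^{(2-p)/2}\leq(||\nabla u_1||_{L^p}+||\nabla u_2||_{L^p})^{p(2-p)/2}$ is really the triangle inequality in $L^p$ rather than $a^p+b^p\leq(a+b)^p$; either way the bound is correct. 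With these cosmetic adjustments your argument is complete.
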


\vspace{2mm}

\begin{corollary} {\rm (of Theorem \ref{ThlocalLipschitz})}\label{CorlocalLipschitz}

Under the same assumptions as in Theorem \ref{ThlocalLipschitz}, there exists a constant $c$ depending only on $p$ and $\Omega$ such that
\begin{equation}\label{ineqlocalLipschitz}
||\nabla (u_1-u_2)||_{L^p}\leq c\Big(||f_1||^{\frac1{p-1}}_{L^{(p^*)'}}+
||f_2||^{\frac1{p-1}}_{L^{(p^*)'}}\Big)^{2-p}||f_1-f_2||_{L^{(p^*)^{'}}}.
\end{equation}

\vspace{1mm}

In particular,
$${\mathcal T}: L^{(p^*)^{'}}(\Omega)\to [L^p(\Omega)]^n \ \ \  \hbox{is locally Lipschitz}.$$

\end{corollary}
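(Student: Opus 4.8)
The plan is to combine the two displayed estimates of Theorem \ref{ThlocalLipschitz} in a purely algebraic way, so that no new analytic input is needed. First I would invoke the first bullet of Theorem \ref{ThlocalLipschitz} to bound each gradient norm in terms of the datum: $\|\nabla u_1\|_{L^p}\leq c\|f_1\|_{L^{(p^*)'}}^{1/(p-1)}$ and $\|\nabla u_2\|_{L^p}\leq c\|f_2\|_{L^{(p^*)'}}^{1/(p-1)}$. Substituting these two inequalities into the factor $\big(\|\nabla u_1\|_{L^p}+\|\nabla u_2\|_{L^p}\big)^{2-p}$ appearing in the second bullet, and using monotonicity of $t\mapsto t^{2-p}$ on $[0,\infty)$ (legitimate since $1<p<2$, so $0<2-p<1$), one gets
$$
\big(\|\nabla u_1\|_{L^p}+\|\nabla u_2\|_{L^p}\big)^{2-p}
\leq c^{2-p}\Big(\|f_1\|_{L^{(p^*)'}}^{\frac1{p-1}}+\|f_2\|_{L^{(p^*)'}}^{\frac1{p-1}}\Big)^{2-p}.
$$
Plugging this into the second estimate of Theorem \ref{ThlocalLipschitz} yields exactly \eqref{ineqlocalLipschitz}, after absorbing $c\cdot c^{2-p}$ into a single constant depending only on $p$ and $\Omega$.

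For the final assertion, I would read \eqref{ineqlocalLipschitz} as the statement that the map ${\mathcal T}:f\mapsto \nabla u$ (which is well defined on $L^{(p^*)'}(\Omega)$ by the existence and uniqueness of the weak solution, since $L^{(p^*)'}(\Omega)\subset L^1(\Omega)\cap W^{-1,p'}(\Omega)$) is Lipschitz on every bounded subset of $L^{(p^*)'}(\Omega)$: if $\|f_1\|_{L^{(p^*)'}},\|f_2\|_{L^{(p^*)'}}\leq R$, then the prefactor $\big(\|f_1\|^{1/(p-1)}_{L^{(p^*)'}}+\|f_2\|^{1/(p-1)}_{L^{(p^*)'}}\big)^{2-p}$ is bounded by $(2R^{1/(p-1)})^{2-p}=:L(R)$, so $\|{\mathcal T}f_1-{\mathcal T}f_2\|_{L^p}\leq c\,L(R)\,\|f_1-f_2\|_{L^{(p^*)'}}$. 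This is precisely the definition of a locally Lipschitz mapping.

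There is essentially no obstacle here; the only point requiring a moment's care is that the exponent $2-p$ is nonnegative (so raising the larger right-hand side to this power preserves the inequality) and strictly less than $1$, which is exactly the regime $1<p<2$ hypothesized. One should also note, for completeness, that when $p=2$ the exponent $2-p$ vanishes, the prefactor becomes $1$, and the corollary degenerates to the global Lipschitz statement already contained in Theorem \ref{ThTholerianp*'p}; this is consistent with the heading remark of Section \ref{SecApplHolderianpsmaller2} that the H\"older constant now genuinely depends on the data when $p<2$.
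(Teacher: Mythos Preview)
Your proposal is correct and is exactly the argument the paper has in mind: the corollary is stated without proof because it follows immediately by inserting the first bullet of Theorem~\ref{ThlocalLipschitz} into the prefactor of the second bullet, using that $2-p\in(0,1)$ so $t\mapsto t^{2-p}$ is nondecreasing. Your reading of \eqref{ineqlocalLipschitz} as Lipschitz continuity on bounded sets is precisely the intended interpretation of ``locally Lipschitz''.
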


\vspace{2mm}

\begin{theorem}
Let $1<p<2$,\ \ $r\in[1,+\infty]$ and $k$ such that $(p^*)^{'}<k<n$. Assume (H1), (H2) and (H3).

Let $u$ be the \textbf{entropic-renormalized solution} of the equation
$$-\Delta_p u +V(x;u)=f, \quad\quad  u=0 \quad {\rm on} \ \partial\Omega.$$
Then, the non linear mapping ${\mathcal T}$ is bounded from $L^{k,r}(\Omega)$ into $L^{k_1,r}(\Omega)$,

\vspace{2mm}

with \ $k_1=\dfrac{p}{1-\theta(p-1)}$, \ \  \ $\theta=\dfrac{\frac{1}{(p^*)'}-\frac{1}{k}}{\frac{1}{(p^*)'}-\frac{1}{n}}\in ]0,1[. $ 

\end{theorem}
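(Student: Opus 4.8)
The plan is to realise $\mathcal T$ as a nonlinear map between the ordered couples $\big(L^{(p^*)'},L^{n,1}\big)$ and $\big([L^p]^n,[L^\infty]^n\big)$, to verify the hypotheses \eqref{1mapT}--\eqref{2mapT}, and then to invoke the interpolation theorem for H\"olderian mappings, Theorem~\ref{AFFGHR_Th2-1}, with $\lambda=0$; the final space is obtained by identifying the resulting interpolation spaces via \eqref{identificationLorentz}. Throughout, $\Omega$ has finite measure, so the relevant Lebesgue/Lorentz embeddings on $\Omega$ are available.

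\textbf{Step 1 (the two endpoint properties).} By Corollary~\ref{CorlocalLipschitz} the map $\mathcal T\colon L^{(p^*)'}\to[L^p]^n$ satisfies \eqref{1mapT} with $\alpha=1$ and $f(\sigma_1,\sigma_2)=c\big(\sigma_1^{1/(p-1)}+\sigma_2^{1/(p-1)}\big)^{2-p}$, which is continuous on $\mathbb R_+^2$ and increasing in each variable since $2-p>0$. By \eqref{gradientbounded} in Proposition~\ref{prop_u_bounded}, and using that $L^{n,1}\subset L^{1}$ and $L^{n,1}\subset L^{(p^*)'}$ (the latter because $(p^*)'<n$), the map $\mathcal T\colon L^{n,1}\to[L^\infty]^n$ satisfies \eqref{2mapT} with $\beta=\frac1{p-1}$ and $g(\sigma)=c\big(1+\sigma^{(m_1+1-p)/(p-1)}\big)$, which is continuous and increasing because $m_1\ge p-1$ by (H3). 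Moreover $[L^\infty]^n\subset[L^p]^n$, so the couples are ordered as required, and $\alpha=1\le\beta=\frac1{p-1}$ since $1<p<2$; hence the function $G$ of \eqref{Gmax} is well defined and Theorem~\ref{AFFGHR_Th2-1} is applicable with $X_0=L^{(p^*)'}$, $X_1=L^{n,1}$, $Y_0=[L^p]^n$, $Y_1=[L^\infty]^n$ and the exponent $r$ in place of its parameter $p$.

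\textbf{Step 2 (interpolation and identification).} Since $\alpha/\beta=p-1$, Theorem~\ref{AFFGHR_Th2-1} gives that $\mathcal T$ maps $\big(L^{(p^*)'},L^{n,1}\big)_{\theta,r}$ into $\big([L^p]^n,[L^\infty]^n\big)_{\theta(p-1),r}$ with
\[
\|\mathcal Tf\|_{\theta(p-1),r;0}\lesssim\big[(1+\|f\|_{L^{(p^*)'}}^{\beta-1})\,G(\|f\|_{L^{(p^*)'}})\big]\,\|f\|_{\theta,r;0}.
\]
By \eqref{identificationLorentz} the domain is $\big(L^{(p^*)'},L^{n,1}\big)_{\theta,r}=L^{k,r}$ with $\frac1k=\frac{1-\theta}{(p^*)'}+\frac\theta n$, which rearranges exactly to $\theta=\big(\tfrac1{(p^*)'}-\tfrac1k\big)\big/\big(\tfrac1{(p^*)'}-\tfrac1n\big)$, and the hypothesis $(p^*)'<k<n$ forces $\theta\in\,]0,1[$. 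Again by \eqref{identificationLorentz} the target is $\big(L^p,L^\infty\big)_{\theta(p-1),r}=L^{k_1,r}$ with $\frac1{k_1}=\frac{1-\theta(p-1)}{p}$, i.e. $k_1=\frac{p}{1-\theta(p-1)}$, where $0<\theta(p-1)<1$ (since $0<\theta<1$ and $0<p-1<1$) guarantees $p<k_1<\infty$. Finally, because $(p^*)'<k$ we have $L^{k,r}\subset L^{(p^*)'}$, so $\|f\|_{L^{(p^*)'}}\lesssim\|f\|_{L^{k,r}}$ and the prefactor above is bounded on bounded subsets of $L^{k,r}$; hence $\mathcal T\colon L^{k,r}(\Omega)\to[L^{k_1,r}(\Omega)]^n$ is bounded.

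The main point requiring care is Step~1: one must check that the endpoint estimates of Corollary~\ref{CorlocalLipschitz} and of \eqref{gradientbounded} in Proposition~\ref{prop_u_bounded}, stated there for weak solutions with more regular data, transfer to the entropic-renormalized solution of \eqref{Eq-datumL1} (which is the solution appearing in the statement), and that the functions $f$ and $g$ produced really are admissible in the sense of \eqref{1mapT}--\eqref{2mapT} (continuous and increasing, with $\alpha\le\beta$). Note that Theorem~\ref{AFFGHR_Th2-1} does not require density of $X_1$ in $X_0$, so only \eqref{1mapT}--\eqref{2mapT} need to be secured; once they are, the remainder is just the arithmetic of the exponents carried out in Step~2.
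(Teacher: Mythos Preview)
Your proof is correct and follows essentially the same route as the paper: both apply Theorem~\ref{AFFGHR_Th2-1} with $\lambda=0$, $\alpha=1$, $\beta=\tfrac{1}{p-1}$ to the endpoint maps coming from Corollary~\ref{CorlocalLipschitz} and Proposition~\ref{prop_u_bounded}, and then identify the resulting interpolation spaces via \eqref{identificationLorentz}. Your version is in fact more careful than the paper's, since you explicitly exhibit the functions $f$ and $g$ of \eqref{1mapT}--\eqref{2mapT}, verify $\alpha\le\beta$, and flag the passage from weak to entropic-renormalized solutions; the paper simply asserts these steps.
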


\vspace{2mm}

\begin{proof}
From Corollary \ref{CorlocalLipschitz} we have
$${\mathcal T}: L^{(p^*)^{'}}(\Omega)\to [L^p(\Omega)]^n \ \ \  \hbox{is locally Lipschitz}$$
and \eqref{ineqlocalLipschitz} holds. By Proposition \ref{prop_u_bounded} we have
$${\mathcal T}:L^{n,1}(\Omega)\to[L^\infty(\Omega)]^n \ \ \ \hbox{is bounded}. $$
By Theorem \ref{AFFGHR_Th2-1} with $\lambda=0$, $\alpha=1$, $\beta=\frac{1}{p-1}$ we have that
$${\mathcal T} \ \ \hbox{maps} \ \  (X_0,X_1)_{\theta,r;\lambda} \ \ \hbox{into} \ \ (Y_0,Y_1)_{\theta\frac\alpha\beta,\frac r\alpha;\lambda\alpha},$$
i.e.
$${\mathcal T} : (L^{(p^*)^{'}}(\Omega),L^{n,1}(\Omega))_{\theta,r} \to (L^p(\Omega),L^\infty(\Omega))_{\theta(p-1),r}$$

\noindent is a locally bounded mapping, with $\theta\in]0,1[$. The assertion follows from \eqref{identificationLorentz}, since
$$
(L^{(p^*)^{'}}(\Omega),L^{n,1}(\Omega))_{\theta,r} =L^{k,r}(\Omega), \ \ \ \ \frac{1}{k}=\frac{1-\theta}{(p^*)'}+\frac{\theta}{n}
$$
and
$$
(L^p(\Omega),L^\infty(\Omega))_{\theta(p-1),r}=L^{k_1,r}(\Omega), \ \ \ \ \frac{1}{k_1}=\frac{1-\theta(p-1)}{p}.
$$
\end{proof}

\vspace{2mm}

For other results concerning equations with data in Lorentz spaces,
see, e.g., \cite{Ferone_Murat2014,Grenon_Murat_Porretta}.

\vspace{2mm}

\section{Conclusion}

We conclude by highlighting that in \cite{AFFGER2023} also applications to the anisotropic equation and variable exponents version of the $p$-Laplacian are given. We refer the reader to the paper \cite{AFFGER2023} for the results.

\vspace{0.1cm}

 \textbf{Anisotropic equation}

\begin{equation*}\label{anisotropic equation}
\begin{cases}-\Delta_\vecp u+V(x;u)=f & \ {\rm in}\ \Omega\\
u=0 & \  {\rm on} \ \partial\Omega,
\end{cases}
\end{equation*}
where
$$ \Delta_\vecp u=-\sum_{i=1}^n\frac \partial{\partial x_i}\left(\left|\frac{\partial  u}{\partial x_i}\right|^{p_i-2}\frac{\partial u}{\partial x_i}\right),$$

\noindent $\vecp=(p_1,\ldots,p_n)$, \  $1<p_i<+\infty$, \   $\vec {p'}=(p^{'}_1,\ldots,p^{'}_n)$, \ $p^{'}_i$ is the conjugate of $p_i$.

\vspace{0.3cm}
\textbf{Variable exponents version the $p$-Laplacian}
\begin{equation*}
 \begin{cases}-\Delta_{p(\cdot)} u+V(x;u)=f & \ {\rm in}\ \Omega\\
 u=0 & \  {\rm on} \ \partial\Omega,
\end{cases}
\end{equation*}
where
$\Delta_{p(\cdot)} u=\div(|\nabla u|^{p(x)-2}\nabla u)$.

\bibliographystyle{plainurl}

\bibliography{FormicaAGMA2024}

\end{document}